\crefname{equation}{}{}
\Crefname{equation}{}{}
\crefname{eqnarray}{}{}
\Crefname{eqnarray}{}{}
\crefname{lemma}{lemma}{lemmas}
\crefname{lem}{lemma}{lemmas}
\crefname{cor}{corollary}{corollaries}
\crefname{thrm}{theorem}{theorems}
\def\bz{{\boldsymbol 0}}
\def\bv{{\boldsymbol v}}
\def\bh{{\boldsymbol h}}
\def\bt{{\boldsymbol t}}
\def\xx{{\boldsymbol x}}
\def\bx{{\boldsymbol x}}
\def\yy{{\boldsymbol y}}
\def\by{{\boldsymbol y}}
\def\uu{{\boldsymbol u}}
\def\bu{{\boldsymbol u}}
\def\be{{\boldsymbol e}}
\def\ff{{\boldsymbol f}}
\def\bg{{\boldsymbol g}}
\def\GG{{\bf G}}
\def\Gbh{G^{\textrm{BH}}}
\def\Glap{G^{\textrm{L}}}
\def\Ghelm{G^{\textrm{H}}}
\def\II{{\bf I}}
\def\TT{{\bf T}}
\def\bS{{\bf S}}
\def\bD{{\bf D}}
\def\bC{{\bf C}}
\def\bB{{\bf B}}
\def\cN{{\mathcal N}}
\def\cB{{\mathcal B}}
\def\cA{{\mathcal A}}
\def\cS{{\mathcal S}}
\def\cSk{{\mathcal S}_k}
\def\cD{{\mathcal D}}
\def\cDt{{\mathcal D}^{\intercal}}
\def\cDk{{\mathcal D}_{k}}
\def\cDkt{{\mathcal D}_{k}^{\intercal}}
\def\cW{{\mathcal W}}
\def\cJ{{\mathcal J}}
\def\cA{{\mathcal A}}
\def\cM{{\mathcal M}}
\def\sint{{\sin{(\theta)}}}
\def\cost{{\cos{(\theta)}}}
\def\jkr{J_{0}(kr)}
\def\jpkr{J_{0}'(kr)}
\def\jpk{J_{0}'(k)}
\def\jppkr{J_{0}''(kr)}
\def\jppk{J_{0}''(k)}
\def\bsigma{{\boldsymbol \sigma}}
\def\bpsi{{\boldsymbol \psi}}
\def\bmu{{\boldsymbol \mu}}
\def\bnu{{\boldsymbol \nu}}
\def\btau{{\boldsymbol \tau}}
\def\bigo{\mathcal{O}}
\def\littleo{o}
\def\Im{{\textrm{\textup{Im}}}}
\def\epsflam{{\epsilon_{\texttt{FLAM}}}}
\def\epscheb{{\epsilon_{\texttt{cheb}}}}
\def\cheb{{\texttt{cheb}}}
\newcommand{\cI}{\mathcal I}
\newcommand{\cC}{\mathcal C}
\newcommand{\bbmat}{\begin{bmatrix}}
\newcommand{\ebmat}{\end{bmatrix}}
\def\pv{\textrm{P.V.} }
\newtheorem{remark}{Remark}
\newtheorem{lem}{Lemma}
\newtheorem{cor}{Corollary}
\newtheorem{thrm}{Theorem}
\newtheorem{definition}{Definition}
\title{A boundary integral equation approach to
  computing eigenvalues of the Stokes operator}
\author{Travis Askham\thanks{
Department of Mathematical Sciences, New Jersey Institute of
Technology, USA.\\ 
email: travis.l.askham@njit.edu} ,\, 
Manas Rachh\thanks{Center for Computational Mathematics,
Flatiron Institute, USA. \\
email: mrachh@flatironinstitute.org}}
\date{}
\begin{document}

\maketitle
\abstract{
  The eigenvalues and eigenfunctions of the Stokes 
  operator have been the subject of intense analytical
  investigation and have applications in the study
  and simulation of the Navier--Stokes equations.
  As the Stokes operator is a fourth-order
  operator,
  computing these eigenvalues and the corresponding
  eigenfunctions is a challenging task, particularly in
  complex geometries and at high frequencies.
  The boundary integral equation (BIE) framework
  provides robust and scalable eigenvalue computations
  due to (a) the reduction in the dimension of the problem to
  be discretized and (b)
  the absence of high frequency ``pollution'' when using
  a Green's function to represent propagating waves.
  In this paper, we detail the theoretical justification
  for a BIE approach to the Stokes eigenvalue problem on
  simply and multiply-connected planar domains, which entails
  a treatment of the uniqueness theory for oscillatory
  Stokes equations on exterior domains. 
  Then, using well-established techniques for
  discretizing BIEs,
  we present numerical results which confirm
  the analytical claims of the paper and demonstrate
  the efficiency of the overall approach.
}

\section{Introduction}

The planar incompressible Stokes equations describe
creeping flows in two dimensions.
Let $\Omega \subset \R^2$
be a bounded domain with $C^2$ boundary denoted by $\Gamma$.
The Stokes eigenvalue problem is to find
values $k^2$ such that 

\begin{equation}
\begin{aligned}
  -\Delta \uu + \nabla p &= k^2 \uu \quad \textrm{in} \quad
  \Omega \label{eq:ostokes} \; , \\
  \nabla \cdot \uu &= 0 \; ,
\end{aligned}
\end{equation}
subject to boundary conditions, has a non-trivial solution $(\uu,p)$.
In this work, we consider the eigenvalue problem subject to 
the Dirichlet boundary condition,
\begin{equation}
  \uu = \bz \quad \textrm{on} \quad \Gamma \label{eq:ostokes_dir} \; .
\end{equation}
It is well known that the values $k^2$ are necessarily
real and positive and that there is a countable collection of such
values $0 < k_{1}^{2} \leq k_{2}^2 \leq \ldots \uparrow \infty$,
counting multiplicities.

\begin{remark}
  When $k = i\alpha$, the differential equation
  \cref{eq:ostokes} is known as the modified Stokes
  equation. As there appears to be no preferred
  name for the equation with real-valued $k$,
  we will refer to \cref{eq:ostokes} as the
  oscillatory Stokes equation.
\end{remark}

The eigenvalues (and eigenfunctions)
of the Stokes operator have applications in the
stability analysis of stationary solutions of the
Navier--Stokes equations \cite{osborn1976approximation},
in the study of decaying two dimensional turbulence
\cite{schneider2008final}, and as a trial basis for
numerical simulations of the Navier--Stokes
equations~\cite{batcho1994generalized}.
The eigenvalues and eigenfunctions of the Stokes
operator are also the subject of intense analytical
investigation
\cite{taylor1933buckling,szego1950membranes,
  polya1951isoperimetric,bramble1963pointwise,
  ashbaugh1996fundamental,leriche2004stokes,
  kelliher2009eigenvalues,antunes2011buckling},
especially as they relate to the eigenvalues and
eigenfunctions of the Laplacian.

Historically, the Stokes eigenvalue problem serves as a
common model problem for numerical eigenvalue analysis
with a fourth order operator (here, the bi-Laplacian).
Further, numerical simulation has long played an
important role in the analyses cited above --- both for
computing the eigenvalues and eigenfunctions
on domains of practical interest and in forming
new conjectures.

Borrowing the language of~\cite{zhao2015robust},
which concerns the eigenvalues of the Laplacian
(also known as the membrane or ``drum'' problem),
the numerical treatment of the Stokes eigenvalue
problem can be divided into two basic approaches.
The first class of methods
directly discretize the
differential operator, typically with a
finite element basis, and the eigenvalues are found
as the eigenvalues of the discrete system.
The second class of methods reformulate the 
oscillatory Stokes equations as a boundary integral
equation (BIE) which is discretized.
The eigenvalues are then found
by a nonlinear search for the values of
$k$ where the BIE is not invertible.

There is a large body of research on the first class of methods
for the Stokes eigenvalue problem.
We do not seek to review this literature here,
but point to \cite{johnson1974beam,
  rannacher1979nonconforming,
  mercier1981eigenvalue,bjorstad1999high,
  jia2009approximation,chen2006approximation,
  lovadina2009posteriori,huang2011numerical,
  carstensen2014guaranteed}
for some representative examples.

As noted in~\cite{zhao2015robust}, integral
equation based methods provide several advantages.
Because the BIE is defined on the
boundary alone, there is a reduction in the
dimension of the domain to be discretized.
This approach reduces the number of unknowns over finite
element discretizations, and does not suffer
from high-frequency ``pollution'' due to the 
large number of unknowns required to resolve
the computation when $k$ is large
\cite{babuska1997pollution} (which typically affects
finite element approaches to determining the eigenvalues).

Further, Zhao and Barnett~\cite{zhao2015robust}
show how to alleviate some of the costliness of the
nonlinear optimization introduced by formulating the 
problem as an integral equation.
The standard approach searches for ``V''-shaped minima
of the singular values of the BIE; see, for
instance, \cite{trefethen2006computed}.
Instead, Zhao and Barnett utilize the Fredholm
determinant (see \cref{sec:dets}) which, for certain
BIEs, is an analytic function of $k$ with roots
precisely when $k^2$ is an eigenvalue.
The Fredholm determinant can be estimated using
a Nystr\"{o}m discretization of the BIE
\cite{bornemann2010numerical,zhao2015robust}.
Then, the eigenvalues can be estimated efficiently
by using high order root finding methods applied
to the discretized determinant.

With the efficiency of the approach of
\cite{zhao2015robust} for the drum problem in mind,
we develop an integral equation based method for the
Stokes eigenvalue problem.
This requires that a layer
potential representation of the solution
of \cref{eq:ostokes} be given and that the resulting BIE
is not invertible precisely when $k^2$ is an eigenvalue.
The first requirement is straightforward to
satisfy because 
well-known layer potential representations for the
modified Stokes equation~\cite{Pozrikidis1992,biros2002embedded,
  jiang2013second,ladyzhenskaya1969mathematical}
are directly applicable.
Proving the invertibility of the associated operators
away from the eigenvalues is a more involved task
and forms the bulk of the theoretical component
of this paper.

\subsection{Relation to other work}

While integral equation based methods for the
related ``buckling'' eigenvalue problem
(which is equivalent  on simply connected domains
\cite{kelliher2009eigenvalues})
have been considered previously,
these typically relied on first-kind integral
equation formulations of the underlying PDE,
i.e. formulations in which the BIE operator is
compact \cite{kitahara2014boundary,antunes2011buckling}.
This is unsatisfying from a numerical
perspective, because the spectrum of a
compact operator either contains zero
or has zero as a limit point
(by design, the representations in
\cite{kitahara2014boundary,antunes2011buckling}
are not injective precisely when
$k^2$ is an eigenvalue).
This obscures the relation between
the non-invertibility of discrete approximations
of the operator and the eigenvalues;
in particular, common measures of the
``non-invertibility'' of a matrix, like the
smallest singular value or the determinant,
converge rapidly to zero for all values
of $k^2$ as the boundary is refined.
The measure of whether $k^2$ is an
approximate eigenvalue is then {\em relative to the
  current grid} for first kind formulations.

The classical single and double layer representations
for oscillatory Stokes considered in this paper
result in second kind equations, i.e. integral equations
of the form $\mathcal{I} - \mathcal{K}_k$ where
$\mathcal{K}_k$ is compact.
Such equations have a more satisfying theory
\cite{reed1972methods,colton1983integral,kress1989linear},
which translates well to numerical implementation
\cite{atkinson2009numerical,bornemann2010numerical,
  hackbusch2012integral,zhao2015robust}.
The use of a second kind representation is standard
for the drum problem \cite{backer2003numerical,zhao2015robust}
and was used recently to compute the vibrating
modes of thin, clamped plates~\cite{lindsay2018boundary}.

\subsection{Paper outline and contributions}

The rest of this paper proceeds as follows.
In \cref{sec:prelim}, we set the notation, provide some
mathematical preliminaries, and review 
properties of single and double layer potentials
for the oscillatory Stokes equations.
Then, in \cref{sec:analysis}, we develop the necessary
theory for proving the main results of this work 
(\cref{thm:dlmain,thm:cfmain}),
which show that the BIEs resulting from these
layer potential representations are not invertible
precisely when $k^2$ is an eigenvalue.
These theoretical developments include a detailed
discussion of the uniqueness of oscillatory Stokes
boundary value problems in exterior domains.
To the best of our knowledge, the invertibility
and uniqueness results are new to the literature.
\Cref{sec:dets} then outlines how the Fredholm determinant
can be used in the oscillatory Stokes context.
In \cref{sec:numerical}, we describe the numerical
methods we use to discretize the BIEs and to perform
determinant calculations.
While the underlying methods are well-established,
the combination of a high-order singular quadrature
rule and a fast-direct method for determinant evaluations
in a BIE framework is novel.
At the moderate frequencies considered in this
paper, we find that standard fast-direct
solvers provide a reasonably efficient determinant
evaluation.
We also present numerical experiments
which demonstrate some of the paper's analytical
claims as well as the effectiveness of the overall
framework.
Finally, we provide some concluding thoughts,
describe plans for future research,
and outline some open questions in
\cref{sec:conclusion}.


%
\section{Mathematical Preliminaries}
\label{sec:prelim}
In this paper, vector- and tensor-valued quantities
are denoted by bold letters (e.g. $\bh$ and $\mathbf{T}$). 
Subscript indices of non-bold characters (e.g. $h_j$ or $T_{ij\ell}$)
are used to denote the entries within a vector or tensor.
We use the standard Einstein summation convention; i.e., 
there is an implied sum taken over the repeated indices of 
any term (e.g. the symbol $a_{j} b_{j}$ is used to represent the sum
$\sum_{j} a_{j} b_{j}$).
If $\xx = (x_1,x_2)^\intercal$, then $\xx^\bot = (-x_2,x_1)^\intercal$.
Similarly, $\nabla^\bot = (-\partial_{x_2},\partial_{x_1})^\intercal$.
Upper-case script characters (e.g. $\mathcal{K}$) are reserved for
operators on Banach spaces, with $\mathcal{I}$ denoting the
identity. Given a set $X$, we denote the closure of $X$
by $\overline{X}$.

For a velocity field $\bu$ and pressure $p$, let $\bsigma(\bu,p)$
denote the Cauchy stress tensor is given by
\begin{equation}
\bsigma(\uu,p) = -p \II + 2 \be(\bu) \, ,
\end{equation}
where $\be(\bu)$ is the strain tensor given by
\begin{equation}
e_{ij}(\bu) = \frac{1}{2} \left( \partial_{x_j} u_i + \partial_{x_i} u_j \right) \; .
\end{equation}
When it is clear from context, we will drop the dependence of
$\bsigma$ on $\bu$ and $p$.
If $\Gamma$ is the boundary of a region $\Omega$ and $\bnu$ is the outward
normal to $\Gamma$, the surface traction $\bt$ on $\Gamma$ 
is the Neumann data, i.e. 
\begin{equation}
\bt = \bsigma \cdot \bnu \, .
\end{equation}

We seek solutions of \cref{eq:ostokes} in the space
\begin{equation}
  A(\Omega) = \{ (\bu,p) \textrm{ s.t. } \bu \in
  \left ( C^2(\Omega)\times C^2(\Omega) \right ) \cap
  \left ( C(\bar{\Omega}) \times C(\bar{\Omega}) \right ) \, , \,
  p \in C^1(\Omega) \cap C(\bar{\Omega})\} \; ,
\end{equation}
where $\Omega$ is an open domain.

\subsection{Green's functions}

Let $\mathcal{L}_x$ denote a linear differential operator. A fundamental
solution $G(\xx,\yy)$ of $\mathcal{L}_x$ satisfies the equation
$\mathcal{L}_x G(\xx,\yy) = \delta_y(\xx)$ in the distributional sense, i.e.
for sufficiently smooth $f$
\begin{equation}
  \mathcal{L}_x \int_{\R^2} G(\xx,\yy) f(\yy) \, d \by = f(\xx) \; .
  \nonumber
\end{equation}
We consider here
free-space Green's functions, i.e. fundamental solutions which satisfy appropriate
radiation conditions as $|\xx-\yy| \to \infty$.
The Green's function of the oscillatory biharmonic equation,
\begin{equation}
  \Delta ( \Delta + k^2 ) u = 0 \; , \label{eq:obiharm} \nonumber
\end{equation}
is given by 
\begin{equation}
  \Gbh(\xx,\yy) = \frac{1}{k^2}
  \left (\frac{1}{2\pi} \log |\xx-\yy| +
  \frac{i}{4} H_0^{(1)}(k|\xx-\yy|) \right ) \, ,
  \label{eq:Gbh}
\end{equation}
where $k$ is the Helmholtz parameter in the oscillatory biharmonic equation,
and $H_{0}^{1}(r)$ is the Hankel function of the first kind of order zero.
Note that this is a scaled difference of the Green's function for
Laplace, i.e.

\begin{equation}
  \Glap(\xx,\yy) = \frac{1}{2\pi} \log |\xx-\yy| \; , \nonumber
\end{equation}
and the Green's function for the Helmholtz equation

\begin{equation}
  \Ghelm(\xx,\yy) = -\frac{i}{4} H_0^{(1)}(k|\xx-\yy|) \; . \nonumber
\end{equation}

\subsection{The Fredholm Alternative}

We require some standard results from the theory of
Fredholm integral equations. Interested readers may
consult \cite{reed1972methods,colton1983integral,kress1989linear},
among others, for the relevant background.

We first recall some definitions.
Let $X$ and $Y$ be Banach spaces with a non-degenerate
bilinear form $\langle \cdot ,\cdot \rangle: X\times Y \to \C$.
\begin{itemize}
\item Two operators $\cA:X\to X$ and $\cB:Y\to Y$ are
adjoint operators if
$\langle A \phi,\psi \rangle = \langle \phi, B\psi \rangle$
for every $\phi \in X$ and $\psi \in Y$.
\item For an operator $\cM:X\to X$, we can define the range
  $R(\cM)$ as the set $\{\phi \in X: \exists \phi_0 \textrm{ with }
  \cM\phi_0 = \phi\}$ and the null space $N(\cM)$ as the
  set $\{\phi \in X: \cM \phi = 0 \}$.
\item An operator $\cA$ is said to be compact if
  $\overline{\cA V}$ is a compact set for any
  bounded subset $V\subset X$.
\item Given a subspace $V\subset X$,
  we can define the subspace $V^\perp\subset Y$ as the
  set $V^\perp = \{ \psi \in Y: \langle \phi,\psi \rangle = 0
  \textrm{ for each } \phi \in V \}$, with the analogous
  definition for subspaces of $Y$.
\end{itemize}

Operators of the form $\cI-\cA$
have existence and uniqueness properties analogous to
matrices. This is known as the Fredholm Alternative;
we present the version provided in \cite{colton1983integral}.

\begin{thrm}[Fredholm Alternative \cite{colton1983integral}]
  Let $X$ and $Y$ be Banach spaces and
  $\langle \cdot,\cdot \rangle: X\times Y \to \C$ be
  a bilinear form. Suppose that $\cA: X\to X$ and
  $\cB:Y \to Y$ are compact adjoint operators. Then
  $\dim N(\cI-\cA) = \dim N(\cI-\cB) \in \N$,
  $R(\cI-\cA) = N(\cI-\cB)^{\perp}$, and
  $R(\cI-\cB) = N(\cI-\cA)^\perp$.
\end{thrm}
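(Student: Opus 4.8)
The plan is to reproduce the classical Riesz--Schauder argument for operators of the form $\cI-\cK$ with $\cK$ compact, carried out in the dual-system framework. Write $S=\cI-\cA$ and $S^{\times}=\cI-\cB$; the adjointness hypothesis gives $\langle S\phi,\psi\rangle=\langle\phi,S^{\times}\psi\rangle$ for all $\phi\in X$ and $\psi\in Y$, and by symmetry every statement proved for $S$ also holds for $S^{\times}$. I would first establish three structural facts about $S$. (i) $N(S)$ is finite-dimensional: $\cA$ restricts to the identity on $N(S)$, so the closed unit ball of $N(S)$ is the image under the compact map $\cA$ of a bounded set, hence compact, and Riesz's lemma forces $\dim N(S)<\infty$. (ii) $R(S)$ is closed: picking a closed complement $X=N(S)\oplus X_0$ (possible by (i)), one shows $S$ is bounded below on $X_0$ --- otherwise a norming sequence together with compactness of $\cA$ produces a nonzero element of $N(S)\cap X_0$ --- so $R(S)=S(X_0)$ is closed. (iii) If $S$ is injective then it is surjective: otherwise $X\supset S(X)\supset S^2(X)\supset\cdots$ is a chain of closed subspaces with strict inclusions, and applying Riesz's lemma on successive quotients produces a bounded sequence whose $\cA$-images have no convergent subsequence, contradicting compactness.

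Next I would prove the annihilator identities. The inclusion $R(S)\subseteq N(S^{\times})^{\perp}$ is immediate: if $\phi=S\phi_0$ and $S^{\times}\psi=0$, then $\langle\phi,\psi\rangle=\langle\phi_0,S^{\times}\psi\rangle=0$. For the reverse inclusion, suppose $\phi_1\notin R(S)$; since $R(S)$ is closed, the non-degeneracy of the pairing (via a Hahn--Banach separation in the dual system) supplies $\psi\in Y$ with $\langle R(S),\psi\rangle=0$ and $\langle\phi_1,\psi\rangle\neq 0$, and then $\langle\phi,S^{\times}\psi\rangle=\langle S\phi,\psi\rangle=0$ for all $\phi\in X$ forces $S^{\times}\psi=0$ by non-degeneracy, so $\phi_1\notin N(S^{\times})^{\perp}$. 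This gives $R(\cI-\cA)=N(\cI-\cB)^{\perp}$, and the same argument with the roles of $X,\cA$ and $Y,\cB$ exchanged gives $R(\cI-\cB)=N(\cI-\cA)^{\perp}$.

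It remains to show $\dim N(S)=\dim N(S^{\times})$. Set $m=\dim N(S)$ and $n=\dim N(S^{\times})$, with bases $\phi_1,\dots,\phi_m$ of $N(S)$ and $\psi_1,\dots,\psi_n$ of $N(S^{\times})$; using non-degeneracy and linear independence, choose $\eta_1,\dots,\eta_m\in Y$ and $g_1,\dots,g_n\in X$ with $\langle\phi_i,\eta_j\rangle=\delta_{ij}$ and $\langle g_i,\psi_j\rangle=\delta_{ij}$. Suppose $m<n$. The finite-rank --- hence compact, and admitting an adjoint --- perturbation $\tilde\cA=\cA+\sum_{j=1}^{m}\langle\cdot,\eta_j\rangle\,g_j$ makes $\cI-\tilde\cA$ injective: pairing $(\cI-\tilde\cA)\phi=0$ against $\psi_i$ for $i\leq m$ forces $\langle\phi,\eta_i\rangle=0$, hence $\phi\in N(S)$ and then $\phi=0$. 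By (iii) applied to $\tilde\cA$, the operator $\cI-\tilde\cA$ is surjective; but solving $(\cI-\tilde\cA)\phi=g_{m+1}$ and pairing with $\psi_{m+1}$ gives $0$ on the left (both contributions vanish since $\psi_{m+1}\in N(S^{\times})$ and $\langle g_j,\psi_{m+1}\rangle=0$ for $j\leq m$) and $\langle g_{m+1},\psi_{m+1}\rangle=1$ on the right, a contradiction. Thus $m\geq n$, and the symmetric argument gives $m\leq n$, so $m=n\in\N$.

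I expect the dimension-count step to be the main obstacle: it is the genuinely non-formal part of the Fredholm alternative (the vanishing of the index), and it requires checking that the perturbed operator $\cI-\tilde\cA$ is again of the admissible form $\cI-\cK$ with $\cK$ a compact operator of the dual system, so that (iii) legitimately applies. A secondary subtlety is the separation step in the annihilator identities: in a general dual system --- as opposed to a Hilbert space or a reflexive Banach space --- one must confirm that non-degeneracy of $\langle\cdot,\cdot\rangle$ really does furnish the needed Hahn--Banach-type annihilator, which is exactly where the precise hypotheses on the dual system from \cite{colton1983integral} enter.
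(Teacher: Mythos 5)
The paper does not prove this statement: it is quoted verbatim from Colton and Kress as a known result, so there is no in-paper proof to compare against. Your proposal is a faithful reconstruction of the standard Riesz--Schauder argument in the dual-system setting, and the skeleton --- finite-dimensionality of $N(\cI-\cA)$ via Riesz's lemma, closedness of the range via boundedness below on a complement, ``injective implies surjective'' via the descending chain, and the index-zero count via a finite-rank, adjointable perturbation $\tilde\cA$ --- is exactly the textbook route. The dimension-count step is carried out correctly, including the check that $\cI-\tilde\cA$ is again of the form identity-minus-compact so that step (iii) applies.

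The one genuine weak point is the reverse inclusion $N(\cI-\cB)^{\perp}\subseteq R(\cI-\cA)$, which you route through a Hahn--Banach separation. In a general dual system this does not go through as stated: Hahn--Banach produces a separating functional in $X^{*}$, and non-degeneracy of $\langle\cdot,\cdot\rangle$ gives no guarantee that this functional is represented by an element of $Y$ (the pairing need not be ``reflexive'' in that sense, and for the concrete system $\langle \ff,\bg\rangle=\int_\Gamma \ff\cdot\bg\,dS$ on $C(\Gamma)$ it is not). You flag this yourself, but the fix deserves to be stated because it is already contained in your own construction: once the biorthogonal systems and the bijective operator $\cI-\tilde\cA$ are in hand, take $f\in N(\cI-\cB)^{\perp}$, solve $(\cI-\tilde\cA)\phi=f$, and pair with each $\psi_i$; since $\langle(\cI-\cA)\phi,\psi_i\rangle=\langle\phi,(\cI-\cB)\psi_i\rangle=0$ and $\langle f,\psi_i\rangle=0$, one gets $\langle\phi,\eta_i\rangle=0$ for all $i$, so the finite-rank correction vanishes and $(\cI-\cA)\phi=f$. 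This is how Colton--Kress and Kress's \emph{Linear Integral Equations} avoid Hahn--Banach entirely; with that substitution (and the routine induction showing that non-degeneracy supplies the biorthogonal sets $\eta_j$ and $g_j$), your argument is complete.
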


\subsection{Properties of the oscillatory Stokes layer
  potentials}

Recall that, in the case $k=i\alpha$ for some real-valued $\alpha$,
the oscillatory Stokes equations \cref{eq:ostokes}
are known as the modified Stokes equations and are of particular
interest for their application to the analysis and numerical
simulation of unsteady flow
\cite{Pozrikidis1992,biros2002embedded,
  jiang2013second,ladyzhenskaya1969mathematical}.
The equations are well-studied in that setting and
integral representations which lead to second kind
integral equations have been developed. We review
some of the relevant results here, translating to
the oscillatory setting.

\subsubsection{Oscillatory Stokeslets and stresslets}
Consider the solution of
\cref{eq:ostokes} where a $\delta$-mass
centered at $\yy$ with strength $\ff$
has been added to the right-hand side of \cref{eq:ostokes}, i.e.

\begin{align}
  \nabla p - \Delta \uu - k^2 \uu &= \delta_\yy \ff \; ,
  \label{eq:ostokes_charge}  \\
  \nabla \cdot \uu &= 0 \; . \nonumber
\end{align}
Recall that

\begin{equation}
 \Delta \Glap(\xx,\yy) = \delta_\yy(\xx) \; . \label{eq:lapdelta}
\end{equation}
If we substitute \eqref{eq:lapdelta} into
\eqref{eq:ostokes_charge} and take the divergence,
we obtain

\begin{equation}
  p = \nabla \Glap(\xx,\yy) \cdot \ff \; . \nonumber
\end{equation}
We then have, formally,

\begin{align}
  \uu &= - (\Delta + k^2)^{-1} ( \Delta \Glap \ff
  - \nabla (\nabla \Glap \cdot \ff ) ) \nonumber \\
  &= \left ( -\Delta + \nabla \otimes \nabla \right )
  \Gbh \ff \; . \nonumber
\end{align}
The tensor

\begin{equation} \label{eq:ostokeslet}
  \GG = - \II \Delta \Gbh + \nabla \otimes \nabla \Gbh
\end{equation}
is then the analog of a Stokeslet
\cite{Pozrikidis1992} for \eqref{eq:ostokes}.

A related object is the stresslet, which is defined
in terms of the stress tensor of the velocity, pressure
pair induced by a Stokeslet. For these tensors, we find
that it is more convenient to express them in index notation
with the Einstein index summing convention.
Recall that the stress tensor $\bsigma$ is defined as 

\begin{equation}
  \sigma_{ij} = -p \delta_{ij} + \left ( \partial_{x_j}u_i
  +\partial_{x_i} u_j \right ) \; , \nonumber
\end{equation}
where $\delta_{ij}$ is the standard Kronecker delta notation.
The stresslet $\TT$ is defined to be

\begin{align}
  T_{ij\ell} &= - \partial_{x_j} \Glap \delta_{i\ell}
  + \partial_{x_\ell} \left ( -\Delta \Gbh \delta_{ij} +
  \partial_{x_i} \left(\partial_{x_j} \Gbh \right) \right)
  \nonumber \\
  & \qquad+ \partial_{x_i} \left ( -\Delta \Gbh \delta_{\ell j} +
  \partial_{x_\ell} \left(\partial_{x_j} \Gbh \right) \right)
  \; . \label{eq:ostress} 
\end{align}
Let $u_i = G_{ij} f_j$ and $p = \partial_{x_i} \Glap f_i$ be a
solution of the Stokes equations induced by a Stokeslet.
Then the corresponding stress tensor is given by
$\sigma_{i\ell} = T_{ij\ell} f_j$.

%
%

\subsubsection{Layer potentials}

We now use the Stokeslet and stresslet 
to define the single
and double layer potentials for the oscillatory Stokes problem.
For $\xx \in \R^2$, the single layer potential with density $\bmu$
is defined to be

\begin{equation} \label{eq:singlelayer}
  \bS [\bmu] (\xx) = \int_\Gamma \GG (\xx,\yy) \bmu(\yy)
  \, dS(\yy) \; .
\end{equation}
We use the notation $\bsigma_\bS[\bmu]$ to denote the
stress tensor of the single layer at any given point
$\xx \in \R^2 \setminus \Gamma$.

For $\xx \in \R^2 \setminus \Gamma$, the double layer
potential with density $\bmu$ is defined to be

\begin{equation} \label{eq:doublelayer}
  \bD [\bmu] (\xx) = \int_\Gamma \left ( \TT_{\cdot,\cdot,\ell}(\xx,\yy)
  \nu_\ell(\yy)\right )^\intercal \bmu(\yy) \, dS(\yy) \; ,
\end{equation}
where $\bnu$ denotes the outward unit normal to the boundary.
If we write $\bmu = \bnu \mu_\nu + \btau \mu_\tau$,
where $\btau = \bnu^\bot$ is the positively oriented unit
tangent to the curve, then we have

\begin{align} \label{eq:stokesdlkernel}
  \left ( \TT_{\cdot,\cdot,\ell}(\xx,\yy)\nu_\ell(\yy) \right )^\intercal
  \bmu(\yy) &= \left ( - \nabla \Glap(\xx,\yy) + 2 \nabla^\bot
  \partial_{\nu\tau} \Gbh(\xx,\yy) \right ) \mu_\nu(\yy) \nonumber \\
  & \qquad+
  \nabla^\bot \left (\partial_{\tau\tau}-\partial_{\nu\nu} \right )
  \Gbh(\xx,\yy) \mu_\tau(\yy) \; .
\end{align}

Let $\cS[\bmu]: C(\Gamma) \to C(\Gamma)$, and
$\cD[\bmu]: C(\Gamma) \to C(\Gamma)$ 
denote the restrictions of the layer potentials 
$\bS[\bmu]$ and $\bD[\bmu]$ on the boundary $\Gamma$, i.e.
for $\bx \in \Gamma$, 

\begin{equation}
  \cS [\bmu] (\xx) = \int_\Gamma \GG (\xx,\yy) \bmu(\yy)
  \, dS(\yy)
\end{equation}
and
\begin{equation}
\label{eq:dlformula}
  \cD [\bmu] (\xx) = \pv \int_\Gamma \left ( \TT_{\cdot,\cdot,\ell}(\bx,\by)
  \nu_\ell(\yy)
  \right )^\intercal \bmu(\yy) \, dS(\yy) \; ,
\end{equation}
where the \pv indicates that the integral is to be
evaluated in the principal value sense. 

For two vector valued
functions $\ff$ and $\bg$ defined on $\Gamma$, consider the bilinear
form
\begin{equation} \label{eq:bi_form}
  \langle \ff , \bg \rangle = \int_\Gamma \ff \cdot \bg dS \; .
\end{equation}
The definition of the adjoint used throughout the paper will be
the one induced by this form.

The adjoint of $\cD$ with respect to the above bilinear form
is of particular interest; and is given by
\begin{equation}
  \cD^{\intercal} [\bmu](\bx) = 
  \pv \int_\Gamma \left ( \TT_{\cdot,\cdot,\ell}(\bx,\by)\nu_\ell(\xx)
  \right ) \bmu(\yy) \, dS(\yy) \; .
\end{equation}

In the following lemma, we review the limiting values of
the layer potentials $\bS$ and $\bD$ on the boundary $\Gamma$.

\begin{lem}[Jump conditions] \label{lem:jump-conds}
  Suppose that $\Omega$ is a bounded region with a $C^{2}$ boundary
  $\Gamma$.
  Let $\bnu(\bx)$ denote the outward pointing normal at $\bx \in \Gamma$.
  Suppose that $\bmu \in C(\Gamma)$.
  Then $\bS[\bmu]$
  is continuous across $\Gamma$, and the exterior and interior
  limits of the surface traction of $\bD[\bmu]$ are equal.
  Furthermore, for $\bx_{0} \in \Gamma$,

  \begin{align}
    \lim_{h \downarrow 0^{+}} \bsigma_\bS[\bmu](\xx_0 \pm h\bnu(\xx_0)) \cdot \bnu(\xx_0)
    &= \mp \frac{1}{2} \bmu(\xx_0) + \cDt[\bmu](\xx_0) \\
    \lim_{h \downarrow 0^{+}} \bD[\bmu](\xx_0 \pm h\bnu(\xx_0)) 
    &= \pm \frac{1}{2} \bmu(\xx_0) + \cD[\bmu](\xx_0)    \; .
  \end{align}
\end{lem}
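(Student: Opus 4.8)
The plan is to reduce each assertion to the corresponding classical jump relation for the \emph{steady} ($k=0$) Stokes layer potentials (see \cite{Pozrikidis1992,ladyzhenskaya1969mathematical}) by showing that the oscillatory Stokeslet $\GG$ and stresslet $\TT$ agree with their steady counterparts up to kernels that are continuous across $\Gamma$, together with as many derivatives as each claim requires. The only analytic ingredient is a statement about the singularity of $\Gbh$ on the diagonal: inserting the small-argument expansion of $H_0^{(1)}(k|\xx-\yy|)$ into \cref{eq:Gbh}, the $\log|\xx-\yy|$ terms of $\tfrac1{2\pi}\log|\xx-\yy|$ and $\tfrac{i}{4}H_0^{(1)}(k|\xx-\yy|)$ cancel (equivalently, $\Delta_x\Gbh$ is a Helmholtz-type kernel, with only a $\log$ singularity), and the cancellation propagates to the next order so that the coefficient of $|\xx-\yy|^2\log|\xx-\yy|$ in $\Gbh$ equals that of the steady biharmonic fundamental solution. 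Consequently $\Gbh$ minus the steady biharmonic fundamental solution is $C^{3,\alpha}$ near the diagonal, with fourth-order derivatives at worst logarithmically --- hence integrably --- singular.

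Granting this, the assertions follow by short limiting arguments. Continuity of $\bS[\bmu]$: by \cref{eq:ostokeslet}, $\GG$ involves at most two derivatives of $\Gbh$ together with the Helmholtz-type kernel $\Delta\Gbh$, so $\GG$ is only weakly (logarithmically) singular and $\bS[\bmu]=\int_\Gamma\GG\bmu\,dS$ extends continuously across $\Gamma$. For the traces of $\bsigma_\bS[\bmu]\cdot\bnu$ and of $\bD[\bmu]$: by \cref{eq:ostress}--\cref{eq:stokesdlkernel}, the kernels of $\bD$, $\cD$, and $\cDt$ are built from $\nabla\Glap$, from one derivative of the Helmholtz-type kernel $\Delta\Gbh$, and from third derivatives of $\Gbh$; subtracting the steady Stokeslet/stresslet cancels the Laplace- and Helmholtz-type pieces exactly and, by the regularity above, leaves a remainder kernel continuous up to $\Gamma$ (vanishing, in fact, on the diagonal). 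Writing $\bsigma_\bS[\bmu](\xx_0\pm h\bnu)\cdot\bnu(\xx_0)$ and $\bD[\bmu](\xx_0\pm h\bnu)$ as the steady-Stokes quantity plus this remainder integral and letting $h\downarrow0$, the steady part tends to $\mp\tfrac12\bmu(\xx_0)$ (resp.\ $\pm\tfrac12\bmu(\xx_0)$) plus the steady principal-value operator applied to $\bmu$, by the classical relations, while the remainder integral --- whose kernel is continuous, so the principal values exist and pass to the limit --- supplies exactly the difference between the oscillatory and steady principal-value operators; adding these recovers the stated formulas with $\cDt[\bmu]$ and $\cD[\bmu]$. Finally, for the traction of $\bD[\bmu]$: the oscillatory and steady Stokeslets carry the \emph{same} pressure $\partial_{x_i}\Glap$, so the oscillatory-minus-steady difference of $\bsigma_\bD[\bmu]\cdot\bnu$ is governed entirely by $\nabla^4$ of $\Gbh$ minus the steady biharmonic fundamental solution, which is only logarithmically singular; its layer potential is therefore continuous across $\Gamma$, and since the steady Stokes double layer is classically known to have continuous traction, so does $\bD[\bmu]$.

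I expect the main obstacle to be the regularity claim of the first paragraph --- showing, from the small-argument expansion of $H_0^{(1)}$, that $\Gbh$ matches the steady biharmonic fundamental solution through the $|\xx-\yy|^2\log|\xx-\yy|$ term, so that the difference is $C^{3,\alpha}$. Everything else is bookkeeping: pairing the Laplace- and Helmholtz-type terms in \cref{eq:ostress}--\cref{eq:stokesdlkernel} with the steady kernels, checking that the leftover biharmonic terms retain the claimed regularity after two derivatives (for $\bS$), three (for $\cD$, $\cDt$, and the traces), and four (for the traction of $\bD$), and matching the $\pm\tfrac12$ constants to the normalization fixed in \cref{eq:ostokeslet}--\cref{eq:ostress}. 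The same reduction could instead be made to the modified Stokes ($k=i\alpha$) layer potentials \cite{biros2002embedded,jiang2013second}, whose jump relations are already recorded; the essential point is only that the principal (singular) parts of these kernels do not depend on $k$.
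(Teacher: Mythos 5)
Your proposal is correct and follows essentially the same route as the paper, which simply observes that the leading-order singularities of the oscillatory kernels coincide with those of the steady Stokes kernels so that the classical jump relations of \cite{KimSangtae1991,Pozrikidis1992} apply. You supply the details the paper omits --- in particular the verification that $\Gbh$ agrees with the steady biharmonic fundamental solution through the $|\xx-\yy|^2\log|\xx-\yy|$ term, which is exactly the regularity needed to make the reduction rigorous.
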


The above expressions are derived by noting that the
leading order singularity of these integral kernels
is the same as for the original Stokes case, so that
the standard jump conditions for Stokes
\cite{KimSangtae1991,Pozrikidis1992}
apply. 

\begin{lem} \label{lem:compact-sd}
  Suppose that $\Omega$ is a bounded region with a $C^{2}$ boundary
  $\Gamma$. Then the operators $\cS$ and $\cD$ defined
  above are compact operators on $C(\Gamma)\times C(\Gamma)$
  and $\mathbb{L}^2(\Gamma)\times \mathbb{L}^2(\Gamma)$.
\end{lem}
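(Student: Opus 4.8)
The plan is to show compactness by identifying the precise nature of the singularity of each integral kernel and invoking standard results for weakly singular integral operators on $C^2$ curves. The key observation, already signaled in the remark after \cref{lem:jump-conds}, is that the kernels of $\cS$ and $\cD$ differ from the corresponding \emph{classical} Stokes layer-potential kernels by smoother terms. More precisely, recall that $\Gbh = (1/k^2)(\Glap + \Ghelm/(-1))$ — i.e. $\Gbh$ is a fixed multiple of $\Glap$ plus a function whose singularity at $\xx=\yy$ is only logarithmic (the Hankel function $H_0^{(1)}(kr)$ has a $\log r$ singularity, and its derivatives up to the orders appearing in \cref{eq:ostokeslet,eq:ostress} are correspondingly no more singular than those of $\log r$). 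Hence the Stokeslet $\GG$ in \cref{eq:ostokeslet} has the same leading $\bigo(\log|\xx-\yy|)$ behavior as the classical Stokeslet, and the stresslet contracted with the normal, as written out explicitly in \cref{eq:stokesdlkernel}, has the same leading behavior as the classical Stokes double-layer kernel.

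First I would treat $\cS$. The kernel $\GG(\xx,\yy)$ is $\bigo(\log|\xx-\yy|)$ as $\yy \to \xx$ along $\Gamma$; on a $C^2$ curve a kernel with at most a logarithmic (hence integrable, weakly singular) singularity defines a compact operator on $C(\Gamma)$ and on $\bL^2(\Gamma)$ by the classical theory (see \cite{colton1983integral,kress1989linear} — e.g. the operator is the norm-limit of operators with continuous kernels obtained by truncating the singularity, each of which is compact, and compact operators are closed under norm limits). The vector/tensor structure is irrelevant: a $2\times 2$ matrix of weakly singular scalar operators is compact on the product space.

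Next I would treat $\cD$, which is the genuinely delicate case because \cref{eq:dlformula} is only a principal-value integral, so the kernel is \emph{not} integrable in the naive sense. Here I would use the decomposition in \cref{eq:stokesdlkernel}: the term $2\nabla^\bot \partial_{\nu\tau}\Gbh$ and the term $\nabla^\bot(\partial_{\tau\tau}-\partial_{\nu\nu})\Gbh$ both involve third derivatives of $\Gbh$, but the $\Ghelm$-part of $\Gbh$ contributes only an $\bigo(\log|\xx-\yy|)$-singular (hence weakly singular, hence compact) piece, so it suffices to handle the $\Glap$-part, i.e. the classical Stokes double layer together with the $-\nabla\Glap \mu_\nu$ term. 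For those, the standard fact for $C^2$ boundaries is that the apparent $1/|\xx-\yy|$ singularity is cancelled by the geometry — writing the kernel in terms of the local curve parametrization, the factor measuring how far $\yy-\xx$ deviates from the tangent direction at $\xx$ vanishes like $|\xx-\yy|$ because $\Gamma$ is $C^2$ — so the effective kernel is in fact bounded (continuous off the diagonal with at worst a logarithmic or bounded singularity on it). This is exactly the classical computation showing the Stokes (and Laplace) double layer is compact on $C(\Gamma)$ for $C^2$ domains \cite{KimSangtae1991,Pozrikidis1992,kress1989linear}; I would cite it rather than reproduce it. Compactness on $\bL^2(\Gamma)$ then follows either from the same weak-singularity estimate or from the fact that a weakly singular kernel on a bounded curve is Hilbert–Schmidt.

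The main obstacle is the principal-value term in $\cD$: one must verify carefully that, after subtracting the smooth ($\Ghelm$-derived) remainder and passing to a local parametrization of the $C^2$ curve, the remaining classical-Stokes kernel has its strong $1/|\xx-\yy|$ singularity annihilated by the $C^2$ regularity of $\Gamma$, leaving a weakly singular — indeed bounded-off-diagonal — kernel. Everything else (the logarithmic estimates, the closure of compact operators under norm limits, the reduction from the tensor operator to scalar weakly singular operators) is routine. I would therefore structure the proof as: (i) split $\Gbh = c_k\Glap + R_k$ with $R_k$ having only logarithmic-order derivatives near the diagonal; (ii) conclude the $R_k$-contributions to both $\cS$ and $\cD$ are weakly singular, hence compact; (iii) observe the $\Glap$-contributions are precisely the classical Stokes single and double layers, whose compactness on $C(\Gamma)$ and $\bL^2(\Gamma)$ for $C^2$ domains is classical; (iv) combine.
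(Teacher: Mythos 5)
There is a genuine gap in the way you identify the singularity of the kernels, and it sits at the heart of the argument. Your step (i) splits $\Gbh = \tfrac{1}{k^2}\Glap - \tfrac{1}{k^2}\Ghelm$ and asserts that the Hankel part contributes only weakly singular terms to $\cS$ and $\cD$. But the Stokeslet \cref{eq:ostokeslet} involves \emph{second} derivatives of $\Gbh$ and the stresslet \cref{eq:ostress} involves \emph{third} derivatives; second and third derivatives of $H_0^{(1)}(k|\xx-\yy|)$ behave like $|\xx-\yy|^{-2}$ and $|\xx-\yy|^{-3}$ (exactly like those of $\log|\xx-\yy|$), which are not integrable on a curve. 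For the same reason your step (iii) fails: plugging $\Glap$ into \cref{eq:ostokeslet} produces $\nabla\otimes\nabla\Glap \sim |\xx-\yy|^{-2}$, a hypersingular kernel, not the classical Stokes single layer --- the classical Stokeslet is generated by the \emph{biharmonic} Green's function $\tfrac{1}{8\pi}|\xx-\yy|^2\log|\xx-\yy|$, not by $\Glap$. So each half of your decomposition, taken separately, gives a non-compact (indeed unbounded) contribution, and steps (ii)--(iii) as written are false.

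The mechanism you are missing is the cancellation \emph{inside} $\Gbh$: in \cref{eq:Gbh} the logarithmic singularities of $\tfrac{1}{2\pi}\log|\xx-\yy|$ and $\tfrac{i}{4}H_0^{(1)}(k|\xx-\yy|)$ cancel, so that $\Gbh = \mathrm{const} + \tfrac{1}{8\pi}|\xx-\yy|^2\log|\xx-\yy| + (\text{terms of order } |\xx-\yy|^4\log|\xx-\yy| \text{ and smoother})$. This is exactly the sense in which the oscillatory kernels agree with the classical Stokes ones to leading order: the correct ``classical part'' to split off is the biharmonic Green's function, and the remainder is $C^3$ near the diagonal, so the Stokeslet is $\bigo(\log|\xx-\yy|)$ and the contracted stresslet is bounded (in fact the paper later notes it is $\bigo(|\xx-\yy|^2\log|\xx-\yy|^2)$). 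Once the decomposition is corrected in this way, the remainder of your plan --- classical Stokes layers on a $C^2$ curve are weakly singular after the geometric cancellation $(\xx-\yy)\cdot\bnu(\yy)=\bigo(|\xx-\yy|^2)$, weakly singular operators are compact on $C(\Gamma)$ and $\bL^2(\Gamma)$, compact operators form a closed subspace --- does go through, and it is then essentially the paper's own one-sentence argument, which simply expands each kernel about $\xx=\yy$ and observes that it is at most weakly singular.
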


Compactness is proved by considering the
asymptotic expansion of each kernel about
$\xx=\yy$ and noting that each is at most
weakly singular.

\subsubsection{Representation Theorem}

In the following theorem, we sketch the proof of the equivalent of the
Green's identity for oscillatory Stokes setting, which is well-known
in the Stokes and modified Stokes settings
\cite{Pozrikidis1992,biros2002embedded,ladyzhenskaya1969mathematical}.

\begin{thrm} \label{thrm:rep-theorem}
  Let $\Omega$ be a bounded domain with $C^2$ boundary and let
  the pair $(\bu,p)$ satisfy the oscillatory Stokes equations
  \cref{eq:ostokes} in $\Omega$. Let $\bt$ denote the surface
  traction associated with $(\bu,p)$. Then

  \begin{equation} \label{eq:rep-theorem}
    \bS [\bt](\xx) - \bD[\bu](\xx) = \begin{cases} 
    \bu(\xx) &\quad \xx \in \Omega \,  \\
    0 &\quad \xx \in E 
    \end{cases} \; ,
  \end{equation}
  where $E=\R^2\setminus\bar\Omega$ is the
  exterior of the domain.
\end{thrm}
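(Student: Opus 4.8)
The plan is to derive \cref{eq:rep-theorem} from the reciprocal (Green's) identity for two oscillatory Stokes flows sharing the same parameter $k$, in the same way one proves the classical Green identity for Stokes flow, with the $k^{2}$ terms cancelling. The first step is to record that a solution of \cref{eq:ostokes} satisfies $\partial_{x_j}\sigma_{ij}(\bu,p) = -k^{2}u_i$; this follows by expanding the divergence of the stress and using $\nabla\cdot\bu = 0$. Consequently, if $(\bu^{(1)},p^{(1)})$ and $(\bu^{(2)},p^{(2)})$ both solve \cref{eq:ostokes} with the same $k$ on a common region $D$, then, using the symmetry of $\bsigma$ and $\nabla\cdot\bu^{(m)}=0$, one has $\sigma^{(1)}_{ij}\partial_{x_j}u^{(2)}_i = 2e_{ij}(\bu^{(1)})e_{ij}(\bu^{(2)}) = \sigma^{(2)}_{ij}\partial_{x_j}u^{(1)}_i$, and therefore
\begin{equation}
  \partial_{x_j}\!\left(\sigma^{(1)}_{ij}u^{(2)}_i - \sigma^{(2)}_{ij}u^{(1)}_i\right)
  = -k^{2}u^{(1)}_i u^{(2)}_i + k^{2}u^{(2)}_i u^{(1)}_i = 0 \nonumber
\end{equation}
on $D$. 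Integrating over $D$ and applying the divergence theorem gives the symmetric boundary relation $\int_{\partial D} t^{(1)}_i u^{(2)}_i\,dS = \int_{\partial D} t^{(2)}_i u^{(1)}_i\,dS$, where $t^{(m)}_i = \sigma^{(m)}_{ij}n_j$ and $\bn$ is the outward normal.

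Next I would take $(\bu^{(1)},p^{(1)}) = (\bu,p)$ and let $(\bu^{(2)},p^{(2)})$ be the flow generated by an oscillatory Stokeslet of arbitrary strength $\ff$ centered at the fixed evaluation point $\xx$, i.e. $u^{(2)}_i(\yy) = G_{ij}(\yy,\xx)f_j$ with stress $\sigma^{(2)}_{i\ell}(\yy) = T_{ij\ell}(\yy,\xx)f_j$, in the notation of \cref{eq:ostokeslet,eq:ostress} and \cref{eq:ostokes_charge}. For $\xx \in E$ this flow is smooth on a neighborhood of $\overline{\Omega}$, so applying the boundary relation on $\Omega$ — more precisely on the interior parallel domains $\Omega_\delta = \{\bx\in\Omega : \textrm{dist}(\bx,\Gamma) > \delta\}$ and letting $\delta\downarrow 0$, since $(\bu,p)\in A(\Omega)$ is only assumed continuous up to $\Gamma$ — yields
\begin{equation}
  \int_\Gamma G_{ij}(\yy,\xx)\,t_i(\yy)\,dS(\yy)
  = \int_\Gamma T_{ij\ell}(\yy,\xx)\,\nu_\ell(\yy)\,u_i(\yy)\,dS(\yy) \nonumber
\end{equation}
for every $\ff$. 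Using the symmetry relations of the free-space Stokeslet and stresslet (the symmetry of $\GG$ and the index-transpose identity for $\TT$, which is exactly the transpose appearing in \cref{eq:doublelayer}), this states precisely that $\bS[\bt](\xx) - \bD[\bu](\xx) = \bz$.

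For $\xx\in\Omega$ the Stokeslet flow is singular at $\xx$, so I would excise a small disk and apply the boundary relation on $\Omega_\delta\setminus\overline{B_\epsilon(\xx)}$, whose boundary splits into $\Gamma_\delta := \partial\Omega_\delta$ and $\partial B_\epsilon(\xx)$, with the outward normal of $\Omega_\delta\setminus\overline{B_\epsilon(\xx)}$ pointing toward $\xx$ on the latter. On $\partial B_\epsilon(\xx)$ the term $\int t_i(\bu)\,G_{ij}(\cdot,\xx)f_j\,dS$ is $\bigo(\epsilon\log\epsilon)\to 0$ because $\GG$ is only logarithmically singular, while $\int T_{ij\ell}(\cdot,\xx)\,n_\ell\,u_i\,f_j\,dS$ converges to the point value $f_j u_j(\xx)$ (up to a sign fixed by the classical local analysis, the same one underlying \cref{lem:jump-conds}); this last computation is the technical heart of the proof, and it goes through because the leading-order singularity of $\GG$ and $\TT$ as $\yy\to\xx$ coincides with that of the classical Stokeslet and stresslet — the Hankel-function correction in $\Gbh$ contributes only lower-order, $\bigo(r^{2}\log r)$, terms — so the classical local evaluation applies verbatim. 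Passing $\epsilon\downarrow 0$ and then $\delta\downarrow 0$, and using arbitrariness of $\ff$ together with the same symmetry relations as above, gives $\bS[\bt](\xx) - \bD[\bu](\xx) = \bu(\xx)$.

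I expect the main obstacles to be bookkeeping rather than conceptual: (i) matching the raw boundary integrals $\int G\,\bt$ and $\int T\nu\,\bu$ to the precise operator definitions \cref{eq:singlelayer,eq:dlformula}, which requires the symmetry and transpose identities for $\GG$ and $\TT$ and care with index orderings and with the sign conventions in \cref{lem:jump-conds}; and (ii) justifying the limit $\delta\downarrow 0$ — since $A(\Omega)$ only gives continuity up to $\Gamma$, the traction $\bt$ on $\Gamma$ must be interpreted as the limit of the tractions on $\Gamma_\delta$ and one must check that the relevant boundary integrals converge, which in practice is part of the meaning of ``the surface traction associated with $(\bu,p)$.'' Since the identity is classical in the Stokes and modified-Stokes settings, I would carry out steps (i)--(ii) briefly and cite \cite{Pozrikidis1992,ladyzhenskaya1969mathematical} for the standard parts.
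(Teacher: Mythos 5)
Your proposal is correct, and it reaches \cref{eq:rep-theorem} by a genuinely different (though equally classical) route than the paper. The paper starts from the reproducing identity $\bu(\xx) = \int_\Omega \left[-(\Delta+k^2)\GG + \nabla\otimes\nabla\Glap\right]\bu\,dV$, i.e.\ it uses the distributional definition of the fundamental solution directly under a volume integral, and then peels off boundary terms by Green's identities and the divergence theorem, regrouping them (together with an added vanishing boundary integral coming from the divergence-free structure) into the single- and double-layer form; the singularity at $\xx$ is absorbed into the formal $\delta$-function manipulation. You instead derive the Lorentz reciprocal identity for two oscillatory Stokes flows --- correctly observing that $\partial_{x_j}\sigma_{ij} = -k^2u_i$ makes the $k^2$ terms cancel, so the identity survives unchanged from the steady case --- pair the given solution with a Stokeslet at the target point, and handle the singularity explicitly by excising a small disk and computing the limiting contribution of the stresslet integral over the small circle. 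What your route buys is a more transparent treatment of the point evaluation (the local analysis is isolated in one place, and your observation that the Hankel correction in $\Gbh$ only perturbs the classical Stokeslet/stresslet singularity at order $r^2\log r$ is exactly the right justification, and is the same fact the paper invokes for \cref{lem:jump-conds}); what it costs is the symmetry and argument-exchange identities for $\GG$ and $\TT$ needed to match the raw reciprocal-identity integrands to the operator definitions \cref{eq:singlelayer,eq:doublelayer}, which the paper's regrouping produces directly. You have correctly flagged that matching and the $\delta\downarrow 0$ limit as the remaining routine verifications, and both are standard.
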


\begin{proof}
  Suppose that $\xx \in \Omega$.
  By the definitions of $\GG$ and $\Glap$, we have
  \begin{equation*}
    \bu(\xx) = \int_\Omega -(\Delta + k^2) \GG(\xx,\yy) \bu(\yy)
    + \nabla \otimes \nabla \Glap(\xx,\yy) \bu(\yy) \, dV(\yy) \; .
  \end{equation*}
  Applying Green's identity and the divergence theorem, we
  obtain
  \begin{equation}
    \bu(\xx) = \int_\Gamma \GG(\xx,\yy) \partial_\nu \bu
    - \partial_\nu \GG(\xx,\yy) \bu
    - \nabla \Glap(\xx,\yy) (\bnu \cdot \bu) \, dS 
    - \int_\Omega \GG(\xx,\yy) (\Delta + k^2) u \, dV \; .
    \nonumber
  \end{equation}
  Substituting the definition of the PDE and applying the divergence
  theorem again, we obtain
  \begin{equation}
    \bu(\xx) = \int_\Gamma \GG \partial_\nu \bu - p \GG \bnu - \partial_\nu \GG \bu
    - \nabla \Glap (\bnu \cdot \bu) \, dS  \; . \label{eq:rep_proof_1}
  \end{equation}
  From the divergence theorem and the divergence-free properties of
  $\uu$ and $\GG$, we then get

  \begin{equation}
    \int_\Gamma \GG \nabla (\bu \cdot \bnu)
    - (\nabla \GG \bnu)^\intercal \bu \, dS = 0 \; .  \label{eq:rep_proof_2}
  \end{equation}
  Adding \cref{eq:rep_proof_1,eq:rep_proof_2}, we get the desired
  result. The argument for the case $\xx \in E$ is similar.
\end{proof}

A consequence of \cref{thrm:rep-theorem} and the analyticity
of $\Gbh$ is 
\begin{cor}
\label{cor:analytic}  
  Let $(\uu,p) \in A(\Omega)$ be a solution of \cref{eq:ostokes}.
  Then each component of $\uu(\xx)$ is an analytic function
  of the coordinates $\xx$ in $\Omega$.
\end{cor}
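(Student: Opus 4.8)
The plan is to deduce analyticity of $\uu$ from the representation formula \cref{eq:rep-theorem} together with the explicit form of the oscillatory Stokeslet $\GG$ in \cref{eq:ostokeslet}. First I would fix an arbitrary point $\xx_0 \in \Omega$ and choose a small closed ball $\overline{B(\xx_0,r)} \subset \Omega$. Since $(\uu,p)$ solves \cref{eq:ostokes} on $\Omega$ and $B(\xx_0,r) \Subset \Omega$, the restriction of $(\uu,p)$ to $B(\xx_0,r)$ satisfies the hypotheses of \cref{thrm:rep-theorem} with $B(\xx_0,r)$ in place of $\Omega$, so for $\xx \in B(\xx_0,r)$ we have the representation $\uu(\xx) = \bS[\bt](\xx) - \bD[\uu](\xx)$, where the layer potentials are now integrals over $\partial B(\xx_0,r)$.

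Next I would observe that for $\xx$ in the open ball and $\yy$ on the bounding circle, we have $\xx \neq \yy$, so the only issue is the regularity of $\GG(\xx,\yy)$ and $\TT(\xx,\yy)$ as functions of $\xx$ on $\{|\xx - \xx_0| < r\}$. From \cref{eq:Gbh}, $\Gbh(\xx,\yy)$ is, for $\xx \neq \yy$, a real-analytic function of $\xx$: the term $\log|\xx-\yy|$ is harmonic hence real-analytic away from $\yy$, and $H_0^{(1)}(k|\xx-\yy|)$ is real-analytic in $\xx$ away from $\yy$ because $H_0^{(1)}$ is analytic on $\C \setminus (-\infty,0]$ and $|\xx-\yy|^2 = (x_1-y_1)^2 + (x_2-y_2)^2$ is a polynomial, while the logarithmic part of $H_0^{(1)}$ is again captured by $\log|\xx-\yy|$; more simply, $\Gbh$ solves the oscillatory biharmonic equation, an elliptic equation with constant coefficients, away from the diagonal, so it is real-analytic there by elliptic regularity for analytic-coefficient operators. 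Consequently $\GG = -\II\Delta\Gbh + \nabla\otimes\nabla\Gbh$ and $\TT$, being finite-order derivatives of $\Gbh$ and of $\Glap$, are real-analytic in $\xx$ for $\xx \neq \yy$. The same holds jointly in $(\xx,\yy)$ off the diagonal.

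Then I would invoke differentiation under the integral sign: each component of $\bS[\bt](\xx) - \bD[\uu](\xx)$ is an integral over the compact curve $\partial B(\xx_0,r)$ of a kernel that, together with all its $\xx$-derivatives, is continuous on $\{|\xx-\xx_0|\le r'\} \times \partial B(\xx_0,r)$ for any $r' < r$; hence the integral is $C^\infty$ in $\xx$ on $B(\xx_0,r)$ and its derivatives are obtained by differentiating the kernel. To upgrade $C^\infty$ to real-analytic, I would estimate the $\xx$-derivatives: on $B(\xx_0,r/2)$ the distance $|\xx-\yy|$ is bounded below by $r/2$, so the $m$-th order $\xx$-derivatives of $\GG(\xx,\yy)$ and $\TT(\xx,\yy)$ are bounded by $C\, m!\, \rho^{-m}$ for some fixed $\rho > 0$ (a standard Cauchy-type bound for the derivatives of the explicit special functions, using that $\log$ and $H_0^{(1)}$ are analytic with a controlled radius of convergence away from their singularities). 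Integrating against the finite measure $\bt\, dS$ and $\uu\, dS$ preserves this bound, so the Taylor series of $\uu$ at $\xx_0$ has a positive radius of convergence and converges to $\uu$; thus $\uu$ is real-analytic near $\xx_0$. Since $\xx_0 \in \Omega$ was arbitrary, each component of $\uu$ is analytic on $\Omega$.

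The main obstacle is purely bookkeeping rather than conceptual: one must be careful that the representation formula \cref{eq:rep-theorem} is applied on a subdomain ($B(\xx_0,r)$) strictly contained in $\Omega$, so that there is no diagonal singularity to contend with, and one must supply the quantitative Cauchy-type derivative bounds on the kernels $\GG$ and $\TT$ — equivalently, cite the real-analyticity of solutions to constant-coefficient elliptic equations (here $\Delta(\Delta+k^2)$) away from sources, which immediately gives the analyticity of $\Gbh(\cdot,\yy)$ off the diagonal and hence of all its derivatives appearing in $\GG$ and $\TT$. Everything else — continuity allowing differentiation under the integral, finiteness of the boundary measure — is routine given \cref{thrm:rep-theorem} and the explicit formulas \cref{eq:Gbh,eq:ostokeslet,eq:ostress}.
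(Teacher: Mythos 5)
Your proposal is correct and is essentially the paper's own argument: the authors simply cite the Helmholtz case (Colton--Kress, Theorem 3.5), whose proof is exactly your localization to a small ball, application of \cref{thrm:rep-theorem} there, and real-analyticity of the kernels $\GG$ and $\TT$ off the diagonal. The additional bookkeeping you describe (differentiation under the integral and Cauchy-type derivative bounds) is the standard way to make that citation precise.
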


The proof of \cref{cor:analytic} follows the same reasoning as
that for the Helmholtz case; see \cite[Theorem 3.5]{colton1983integral}.

\subsubsection{Null-space correction \label{subsubsec:nullspacecorr}}

Without modification, the standard layer potentials
can result in rank-deficient representations for
the boundary value problems. The nature of this deficiency
is treated in~\cref{sec:analysis} but for now we introduce a standard
operator used to correct this. For any integrable density
$\bmu$, let $\cW[\bmu]$ be defined by

\begin{equation} \label{eq:ones_operator}
  \cW[\bmu](\xx) = \frac{1}{|\Gamma|} \int_\Gamma \bnu(\xx)
  \left ( \bnu(\yy) \cdot \bmu(\yy) \right )
  \, dS(\yy) \; ,
\end{equation}
for any $\xx \in \Gamma$. We have

\begin{lem}
  \label{lem:propnullspacecorr}

  Let $\Omega$ be a domain with $C^2$ boundary and $\bmu$ be
  an integrable function defined on $\Gamma$. Then
  \begin{itemize}
  \item $\cW[\cW[\bmu]] = \cW[\bmu]$,
  \item $\cW^\intercal = \cW$,
  \item $\cW[\bmu - 2 \cD[\bmu]] = 0$,
  \item $\cW[\cS[\bmu]] = 0$,
  \end{itemize}
  where the transpose is induced by the bilinear
  form \cref{eq:bi_form}.
\end{lem}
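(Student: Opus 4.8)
The plan is to verify each of the four identities in turn, all of which follow from elementary manipulations once we observe that $\cW$ is, up to the scalar normalization $1/|\Gamma|$, an orthogonal projection onto the span of the outward normal field $\bnu$. Write $\langle \cdot,\cdot\rangle$ for the bilinear form \cref{eq:bi_form} and abbreviate $\langle \bnu,\bmu\rangle_\Gamma = \int_\Gamma \bnu(\yy)\cdot\bmu(\yy)\,dS(\yy)$, so that $\cW[\bmu](\xx) = \frac{1}{|\Gamma|}\bnu(\xx)\,\langle \bnu,\bmu\rangle_\Gamma$.

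\emph{Idempotency and self-adjointness.} For the first identity, apply $\cW$ to $\cW[\bmu]$ and pull the scalar $\langle \bnu,\bmu\rangle_\Gamma$ out of the inner integral; what remains is $\frac{1}{|\Gamma|^2}\bnu(\xx)\,\langle\bnu,\bmu\rangle_\Gamma \int_\Gamma \bnu(\yy)\cdot\bnu(\yy)\,dS(\yy)$, and since $\bnu$ is a unit vector the last integral equals $|\Gamma|$, recovering $\cW[\bmu]$. For $\cW^\intercal = \cW$, compute $\langle \cW[\ff],\bg\rangle$ directly: it equals $\frac{1}{|\Gamma|}\langle\bnu,\ff\rangle_\Gamma\langle\bnu,\bg\rangle_\Gamma$, an expression symmetric in $\ff$ and $\bg$, hence equal to $\langle \ff,\cW[\bg]\rangle$.

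\emph{The two annihilation identities.} These are where the substantive input enters, and I expect the third one to be the main obstacle. Since $\cW[\bmu]$ is always a scalar multiple of $\bnu$, it suffices to show $\langle \bnu,\, \bmu - 2\cD[\bmu]\rangle_\Gamma = 0$ and $\langle\bnu,\,\cS[\bmu]\rangle_\Gamma = 0$. For the latter, note that $\langle\bnu,\cS[\bmu]\rangle_\Gamma = \langle \cS^\intercal[\bnu],\bmu\rangle_\Gamma$; because the oscillatory Stokeslet tensor $\GG$ is symmetric, $\cS$ is self-adjoint, so this is $\langle \cS[\bnu],\bmu\rangle_\Gamma$, and it is enough to show $\cS[\bnu] \equiv \bz$ on $\Gamma$. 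This is a standard Stokes fact: the single-layer potential $\bS[\bnu]$ of the normal density is the velocity field of an incompressible flow whose net flux across $\Gamma$ vanishes by the divergence theorem, and in fact $\bS[\bnu]$ can be shown to vanish identically (or be constant) by applying the representation theorem \cref{thrm:rep-theorem} to the exterior/interior constant solutions, or directly from the identity $\int_\Gamma \GG(\xx,\yy)\bnu(\yy)\,dS(\yy) = \bz$ which follows by writing $\bnu\,dS$ as $\nabla\cdot(\,\cdot\,)$ and using $\nabla_\yy\cdot\GG(\xx,\yy)=\bz$. For the third identity, I would similarly transpose: $\langle\bnu,\cD[\bmu]\rangle_\Gamma = \langle \cDt[\bnu],\bmu\rangle_\Gamma$, and then show that $\cDt[\bnu] = \tfrac12\bnu$ on $\Gamma$. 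This is the content of the classical relation for Stokes double layers applied to a rigid-body (here, constant) velocity: taking $\bu\equiv$ const in the exterior representation \cref{eq:rep-theorem} forces $-\cD[\text{const}] = \tfrac12(\text{const})$ on $\Gamma$ after accounting for the jump relation in \cref{lem:jump-conds}, i.e. $(\tfrac12\cI + \cD)[\bc] = \bz$ for constant $\bc$; transposing and pairing with $\bnu$ via the flux/divergence-theorem identity then yields $\langle \bnu, (\tfrac12\cI + \cD)[\bmu]\rangle_\Gamma = \langle (\tfrac12\cI + \cDt)[\bnu],\bmu\rangle_\Gamma = 0$ for all $\bmu$, which rearranges to exactly $\cW[\bmu - 2\cD[\bmu]] = 0$.

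The main subtlety to get right is the bookkeeping of principal-value versus on-surface limits and the precise half-integer jump constants from \cref{lem:jump-conds}, together with confirming that the relevant ``constant velocity'' arguments apply verbatim in the oscillatory setting — which they do, because (as noted after \cref{lem:jump-conds}) the leading singular behavior and hence the jump relations are identical to the classical Stokes case, and because the representation theorem \cref{thrm:rep-theorem} has already been established here for oscillatory Stokes. Once those two on-surface identities $\cS[\bnu] = \bz$ and $(\tfrac12\cI + \cDt)[\bnu] = \bz$ are in hand, all four bullet points drop out by the projection structure described above.
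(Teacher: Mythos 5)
Your treatment of the first, second, and fourth bullets is correct, though the fourth is more roundabout than it needs to be: since $\bS[\bmu]$ is continuous across $\Gamma$ and divergence-free in $\Omega$, one has directly $\int_\Gamma \bnu\cdot\cS[\bmu]\,dS=\int_\Omega \nabla\cdot\bS[\bmu]\,dV=0$, with no need for self-adjointness of $\cS$ or the identity $\cS[\bnu]=\bz$. The paper's (one-line) proof of the last two bullets is exactly this divergence-theorem argument; for the third bullet it uses \cref{lem:jump-conds} to recognize $\bmu-2\cD[\bmu]$ as $-2$ times the interior boundary trace of the divergence-free field $\bD[\bmu]$, whose flux through $\Gamma$ therefore vanishes.

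The third identity is where your argument has a genuine gap. First, the classical relation $(\tfrac12\cI+\cD)[\bc]=\bz$ for constant $\bc$ does \emph{not} carry over to the oscillatory setting: a constant velocity $\bc$ solves \cref{eq:ostokes} only together with the pressure $p=k^2\,\bc\cdot\xx$, whose traction $-k^2(\bc\cdot\xx)\bnu$ is nonzero, so \cref{thrm:rep-theorem} gives $\bD[\bc]=-\bc+\bS[-k^2(\bc\cdot\yy)\bnu]$ in $\Omega$, not $-\bc$. Second, even granting that identity, it concerns $\cD$ acting on constants and implies nothing about $\cDt$ acting on $\bnu$: transposing $\langle(\tfrac12\cI+\cD)[\bc],\bg\rangle=0$ only says that $(\tfrac12\cI+\cDt)[\bg]$ has zero mean for every $\bg$, which is a different statement. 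Third, your signs are inconsistent: you correctly identify the needed identity as $\cDt[\bnu]=+\tfrac12\bnu$ (equivalently $(\cI-2\cDt)[\bnu]=0$, which is \cref{lem:nunull}), but your final step asserts $(\tfrac12\cI+\cDt)[\bnu]=\bz$, i.e. $\cDt[\bnu]=-\tfrac12\bnu$, which would give $\cW[\bmu-2\cD[\bmu]]=2\cW[\bmu]\neq0$. The fix is either the direct divergence-theorem argument above, or, if you want the dual identity, to apply the \emph{traction} jump relation of \cref{lem:jump-conds} to the single layer with density $\bnu$: one has $\bS[\bnu]\equiv\bz$ in $\Omega$ with constant associated pressure, so its interior traction is the appropriate multiple of $\bnu$ and the jump relation yields $\cDt[\bnu]=\tfrac12\bnu$.
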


\begin{proof}
  The first two results follow from the definitions of $\cW$ and
  the normal vector. The other two follow from the fact that
  $\bS[\bmu]$ and $\bD[\bmu]$ are divergence-free and
  an application of \cref{lem:jump-conds}.
\end{proof}

\begin{remark}
The operators $\bS,\bD,\cD$ and $\cD^{\intercal}$
depend on the Helmholtz parameter $k$ of the oscillatory Stokes equation.
In places where it is essential to highlight this dependence, in a slight
abuse of notation, we will use the symbols $\bS_{k}, \bD_{k},\cD_{k}$ and $\cDt_{k}$ to 
denote this dependence.
Similarly, we will use $A^{\Gamma}$ instead of the operator $A$ to highlight 
the dependence of the operator $A$ on the boundary of the region $\Gamma$.
\end{remark}


%
\section{Fredholm analysis of the integral representations}
\label{sec:analysis}
In this section, we establish how layer potential
representations of oscillatory Stokes velocity fields
can be used to compute Stokes eigenvalues.
The main results show that for certain representations
the resulting integral equation is not invertible precisely
when $k^2$ is an eigenvalue.
For the interior Dirichlet eigenvalue problem, this is
proved separately for a double layer representation on
simply connected domains in \cref{thm:dlmain} and
for a combined-field representation on multiply connected
domains in \cref{thm:cfmain}.

Before proving the main theorems, we require a number
of uniqueness results for oscillatory Stokes boundary
value problems.
To prove the uniqueness results, we follow the 
structure presented in Colton and Kress~\cite[Ch. 3]{colton1983integral}
for the scalar Helmholtz equation.
While uniqueness results for interior Dirichlet,
Neumann, and impedance problems follow from energy
considerations and compactness arguments, the proofs
for the uniqueness of exterior problems are more
involved.
In particular, the exterior problems are only 
well-posed after imposing an appropriate radiation
condition.
We formulate well-posed boundary value problems 
for both interior and exterior domains with
Dirichlet, Neumann and impedance boundary
conditions and present uniqueness results for
each.

Along with the Fredholm alternative, these uniqueness
results are sufficient to prove
\cref{thm:dlmain,thm:cfmain}.
Once the main theorems are established,
the details of how to use the Fredholm determinant
as a numerical tool for computing the Stokes eigenvalues
follow in a straightforward manner from the results
in~\cite{zhao2015robust}.
We reproduce these results in the present
context for completeness.

\subsection{Boundary value problems --- interior}

Let $\Omega$ be a bounded domain with a $C^2$ boundary
denoted by $\Gamma$.
We summarize the interior Dirichlet, Neumann and impedance
boundary value problems in~\cref{def:int_dir,def:int_neu,def:int_imp} below.

\begin{definition}[Interior Dirichlet problem]
  \label{def:int_dir}
  Let $\ff \in C(\Omega)$ be given. Find $(\bu,p) \in A(\Omega)$
  such that
  \begin{equation}
  \begin{aligned} \label{eq:dir_interior}
    \Delta \bu + k^{2} \bu &= \nabla p \quad \bx \in \Omega \, ,\\
    \nabla \cdot \bu &= 0 \quad \bx \in \Omega \, ,  \\
    \bu &= \ff \quad \bx \in \Gamma \, .
  \end{aligned}
  \end{equation}
\end{definition}
Note that the divergence-free constraint for the oscillatory
Stokes equations implies a compatibility condition on the
Dirichlet data $\ff$, namely that
\begin{equation} \label{eq:dir_compat}
  \int_\Gamma \ff \cdot \bnu \, dS = 0 \; .
\end{equation}

\begin{definition}[Interior Neumann problem]
  \label{def:int_neu}
  Let $\bg \in C(\Omega)$ be given. Find $(\bu,p) \in A(\Omega)$
  such that
  \begin{equation}
  \begin{aligned} \label{eq:neu_interior}
    \Delta \bu + k^{2} \bu &= \nabla p \quad \bx \in \Omega \, ,\\
    \nabla \cdot \bu &= 0 \quad \bx \in \Omega \, ,  \\
    \bt &= \bg \quad \bx \in \Gamma \, .
  \end{aligned}
  \end{equation}
\end{definition}

\begin{definition}[Interior impedance problem]
\label{def:int_imp}
  Let $\bh \in C(\Omega)$ be given and suppose 
  $\eta \in \mathbb{C}$ with $\Re{(\eta)} >0$ and $\Im{(\eta)}\ge 0$. 
  Find $(\bu,p) \in A(\Omega)$  such that
  \begin{equation}
  \begin{aligned} \label{eq:imp_interior}
    \Delta \bu + k^{2} \bu &= \nabla p \quad \bx \in \Omega \, ,\\
    \nabla \cdot \bu &= 0 \quad \bx \in \Omega \, ,  \\
    \bt - i \eta \bu &= \bh \quad \bx \in \Gamma \, .
  \end{aligned}
  \end{equation}
\end{definition}

\subsection{A radiation condition for the oscillatory Stokes
  equation}

\begin{figure}
  \begin{center}
  \includegraphics[width=0.4\textwidth]{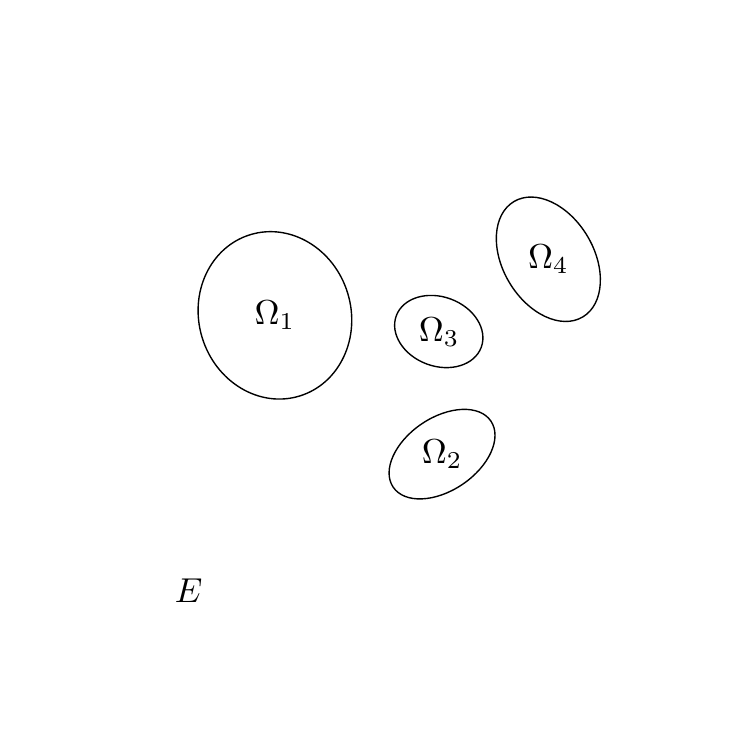}
  \caption{Example of an exterior domain with four obstacles.}
  \label{fig:ext_dom}
  \end{center}
\end{figure}

Let $\Omega$ be the union of a finite collection of
simply connected domains, i.e. $\Omega = \bigcup_{i=1}^m \Omega_i$
for some $m \in \N$,
and let $E = \R^{2} \setminus \bar{\Omega}$ denote its
exterior; see \cref{fig:ext_dom} for an example with $m=4$.
Let $\Gamma = \partial E$ denote the boundary of $E$ and
$\bnu(\yy)$ denote the exterior normal to the point $\yy$ on
$\Gamma$, i.e. the normal vector pointing out of $E$ into $\Omega$.
For a given function $\ff$ defined on $\Gamma$,
the exterior Dirichlet boundary value problem is to
find a pair $(\bu,p)$ which satisfies:
\begin{equation}
\begin{aligned}
\Delta \bu + k^{2} \bu &= \nabla p \quad \bx \in E \, ,\\
\nabla \cdot \bu &= 0 \quad \bx \in E \, ,  \\
\bu &= \ff \quad \bx \in \Gamma \, . \nonumber
\end{aligned}
\end{equation}
In addition to the boundary condition on
$\Gamma$, we must impose radiation conditions
at $\infty$, analogous to the Helmholtz equation.

Let $B_r(0)$ denote the disc of radius $r$ centered
at the origin and $\partial B_r(0)$ its boundary.
We propose the following radiation condition.

\begin{definition} \label{def:radcond}
Let $(\bu,p)$ satisfy the oscillatory Stokes equations in
the exterior of a bounded domain. We say that
the pair $(\bu,p)$ is {\em radiating} if
\begin{equation}
\lim_{r\to \infty} \sqrt{r} \left| \bt - i k \bu \right| \to 0 \, ,
\label{eq:radcond}
\end{equation}
uniformly in direction where $\bt = \bsigma \cdot \bnu$
with $\bnu = \xx/|\xx|$, i.e. $\bt$ is the surface
traction on $\partial B_r(0)$.     
\end{definition}

In the following lemma, we show that the oscillatory
Stokeslet satisfies the radiation condition. 
\begin{lem}
The oscillatory Stokeslet, as defined in \eqref{eq:ostokeslet}, 
satisfies the radiation condition in Definition \ref{def:radcond}.
\end{lem}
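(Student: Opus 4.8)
The plan is to obtain explicit asymptotic expansions of the oscillatory Stokeslet $\GG$ and its associated pressure and stress as $|\xx - \yy| \to \infty$, and then verify the radiation condition \cref{eq:radcond} directly from these expansions. Fixing $\yy$ (say $\yy = \bz$ for simplicity, by translation invariance) and writing $r = |\xx|$, $\hat{\xx} = \xx/r$, the key input is the large-argument asymptotics of the Hankel function $H_0^{(1)}(kr) \sim \sqrt{2/(\pi k r)}\, e^{i(kr - \pi/4)}$, together with the formula \cref{eq:Gbh} for $\Gbh$ and the representation \cref{eq:ostokeslet} of $\GG$ as $-\II\Delta\Gbh + \nabla\otimes\nabla\Gbh$. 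Since $\Delta\Gbh = \Glap + \Ghelm$-type combination, in fact $-\Delta\Gbh$ simplifies: applying $\Delta$ to \cref{eq:Gbh} and using $\Delta \log|\xx| = 0$ away from the origin and $(\Delta + k^2)H_0^{(1)}(k|\xx|) = 0$, one finds $-\Delta \Gbh = \tfrac{i}{4}H_0^{(1)}(kr)$, i.e.\ the Helmholtz fundamental solution. So $\GG = \II \Ghelm + \nabla\otimes\nabla\Gbh$, and each entry decays like the Helmholtz Green's function's leading $r^{-1/2}e^{ikr}$ behavior, with the $\nabla\otimes\nabla\Gbh$ piece contributing both a nonoscillatory $\bigo(r^{-2})$ part (from the $\log$ term) and an oscillatory part from the Hankel term.

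The heart of the argument is that the radiation condition is really a statement that $\bt$ and $ik\bu$ agree to leading order in the oscillatory, outgoing part. First I would compute $\bu = \GG\ff$ and its gradient to leading order: differentiating the oscillatory factor $e^{ikr}$ produces $ik\hat{\xx}$, so $\partial_{x_j} u_i \sim ik\hat x_j u_i + \bigo(r^{-3/2})$, hence the strain tensor $e_{ij}(\bu) \sim \tfrac{i k}{2}(\hat x_j u_i + \hat x_i u_j)$ and the surface traction on $\partial B_r(0)$, with $\bnu = \hat\xx$, is $t_i = -p\hat x_i + 2 e_{ij}\hat x_j \sim -p\hat x_i + ik(u_i(\hat\xx\cdot\hat\xx) + \hat x_i(\hat\xx\cdot\bu)) = -p\hat x_i + ik u_i + ik\hat x_i(\hat\xx\cdot\bu)$. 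Thus $\bt - ik\bu \sim -p\hat\xx + ik(\hat\xx\cdot\bu)\hat\xx$ to leading order. It then suffices to show that $p$ and $ik(\hat\xx\cdot\bu)$ match to leading order in $r^{-1/2}$ — i.e.\ their difference is $o(r^{-1/2})$ — because everything else in $\bt - ik\bu$ is already $\bigo(r^{-3/2})$ from the subleading terms. Here I would use the explicit pressure $p = \nabla\Glap(\xx,\bz)\cdot\ff = \tfrac{1}{2\pi}\hat\xx\cdot\ff\, r^{-1}$, which is $\bigo(r^{-1})$ and hence $o(r^{-1/2})$; and separately show $\hat\xx\cdot\bu = \hat x_i G_{ij}f_j$ is also $\bigo(r^{-1})$ or faster. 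The latter is the crucial near-incompressibility fact: because $\nabla\cdot\GG = 0$ columnwise, the radial (longitudinal) component of the Stokeslet has no $r^{-1/2}$ outgoing part — the oscillatory far field of a divergence-free field is transverse. Concretely, $\hat x_i(\II\Ghelm)_{ij} = \hat x_j \Ghelm$ is $\bigo(r^{-1/2})$, but $\hat x_i (\nabla\otimes\nabla\Gbh)_{ij} = \hat x_i \partial_{x_i}\partial_{x_j}\Gbh$ cancels it to leading order, since $\hat x_i\partial_{x_i} = \partial_r$ acting on the oscillatory factor gives $ik$ times it, and one checks $\partial_r\partial_{x_j}\Gbh \sim ik\,\partial_{x_j}\Gbh \sim -\hat x_j\Ghelm$ to leading oscillatory order (using $-\Delta\Gbh = \Ghelm$). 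So $\hat\xx\cdot\bu = \bigo(r^{-1})$, matching $p$'s order, and both are $o(r^{-1/2})$; multiplying $\bt - ik\bu = \bigo(r^{-1})$ (at worst) by $\sqrt r$ gives $\bigo(r^{-1/2}) \to 0$ uniformly in $\hat\xx$.

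I expect the main obstacle to be bookkeeping the asymptotics of $\nabla\otimes\nabla\Gbh$ and of $\nabla \bsigma_\GG$ cleanly enough to see the cancellation in the radial component, rather than any conceptual difficulty: one must track several terms — the $\log$ contribution (nonoscillatory, $\bigo(r^{-2})$ after two derivatives), the leading Hankel term ($r^{-1/2}e^{ikr}$), and the first correction to the Hankel asymptotics ($r^{-3/2}e^{ikr}$) — and confirm that the would-be $r^{-1/2}$ term in $\bt - ik\bu$ vanishes identically, leaving genuine decay. A convenient way to organize this, which I would adopt, is to work with the identity $\GG = \II\Ghelm + \nabla\otimes\nabla\Gbh$ and the further observation that $\nabla\otimes\nabla\Gbh = \tfrac{1}{k^2}\nabla\otimes\nabla(\Glap - \Ghelm\cdot(\text{const}))$ — in fact $k^2\Gbh = \Glap + k^2\cdot(\text{Helmholtz-type term})$ — so that modulo the harmless harmonic $\log$ piece, $\GG$ is built entirely from $\Ghelm$ and its derivatives, and the radiation condition reduces to the already-standard Sommerfeld-type behavior of the Helmholtz fundamental solution plus the transversality of $\nabla\otimes\nabla$ applied to an outgoing wave. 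With that reduction in hand, the verification of \cref{eq:radcond} is a short computation. Alternatively, and perhaps most cleanly, one notes that for the Stokeslet the pressure $p = \bigo(r^{-1})$ is subdominant and the velocity behaves to leading order like $\bu \sim \II\Ghelm\ff$, a vector Helmholtz field; its surface traction reduces to $2e_{ij}(\bu)\hat x_j \sim ik(u_i + \hat x_i(\hat\xx\cdot\bu))$, and since $\hat\xx\cdot\bu = o(r^{-1/2})$ one gets $\bt \sim ik\bu$ at the required rate. This is the route I would write up.
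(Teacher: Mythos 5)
Your proposal is correct and takes essentially the same route as the paper: a direct verification of \cref{eq:radcond} from the large-argument Hankel asymptotics, using that $\nabla$ acts as $ik\hat{\xx}$ on the outgoing part to leading order, that the Stokeslet pressure is $O(1/r)$, and that the radial (longitudinal) component of the far field is subleading. The only cosmetic difference is that the paper first rewrites $\GG = -\nabla^\perp\otimes\nabla^\perp\Gbh$, which makes the transversality of the Hankel contribution manifest, whereas you verify the cancellation between $-\II\Delta\Gbh$ and the radial part of $\nabla\otimes\nabla\Gbh$ directly; both amount to the same computation.
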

\begin{proof}
Consider the Stokeslet induced by an arbitrary charge
$k^2 \bpsi$ at the origin where $\bpsi \in \mathbb{C}^2$ 
is a constant. Let $r = |\xx|$,
$\bnu(\xx) = \xx/|\xx|$, and $\btau(\xx) = \bnu(\xx)^\perp$. We have

\begin{align*}
\bu (\xx) &= k^2 \GG(\xx,0) \bpsi \\
&= k^2 \left (-\II \Delta \Gbh(\xx,0)
+ \nabla \otimes \nabla \Gbh(\xx,0)\right ) \bpsi \\
&= -k^2 \left (\nabla^\perp \otimes \nabla^\perp \Gbh(\xx,0) \right) \bpsi \\
&= \left(\nabla^\perp \otimes \nabla^\perp \left ( \frac{1}{2\pi}
\log r + \frac{i}{4} H_0^{(1)} (kr) \right ) \right)\bpsi \; .
\end{align*}
Note that derivatives of $\log r$ are $\littleo (1/\sqrt{r})$
and that the pressure associated with the Stokeslet is
$p = \nabla \Glap(\xx) \cdot \bpsi$. We then have

\begin{align*}
\left | \sigma \cdot \bnu(\xx) - i k \bu \right | &=
\left | p \bnu(\xx) + \partial_{{\nu_x}} \bu + \nabla (\bu \cdot \bnu(\xx))
- ik \bu \right | \\
&\leq \left | \partial_{{\nu_x}} \bu - ik\bu \right | + \left | \nabla(\bu \cdot \bnu(\xx)) \right |
+ \littleo (1/\sqrt{r}) \\
&\leq \frac{1}{4} \left | \partial_{{\nu_x}} \left(\nabla^\perp \otimes
\nabla^\perp \left (H_0^{(1)} (kr) \right ) \right)\bpsi
- i k \left(\nabla^\perp \otimes \nabla^\perp
\left (H_0^{(1)} (kr) \right ) \right)\bpsi \right | \\
& \quad + \left | \nabla \left( \partial_{\tau_x} \left ( 
\nabla^\perp \left (H_0^{(1)} (kr) \right ) \cdot \bpsi  \right )
\right) \right | + \littleo (1/\sqrt{r}) \; .
\end{align*}
Because $H_0^{(1)}(kr)$ has the asymptotic expansion 

\begin{equation}
H_0^{(1)}(kr) = \sqrt{\frac{2}{\pi k r}} e^{i(rk-\pi/4)} \left ( 1 + O\left (
\frac{1}{r} \right ) \right ) \; \nonumber
\end{equation}
as $r\to \infty$, we have

\begin{equation}
\left | \partial_{{\nu_x}} \left(\nabla^\perp \otimes
\nabla^\perp \left (H_0^{(1)} (kr) \right ) \right)\bpsi
- ik \left(\nabla^\perp \otimes \nabla^\perp
\left (H_0^{(1)} (kr) \right ) \right)\bpsi \right | =
o ( 1/\sqrt{r} ) \; . \nonumber
\end{equation}
Finally, since $H_0^{(1)}(kr)$ is radially symmetric,
we have

\begin{equation}
\left | \nabla \left( \partial_{\tau_x} \left ( 
\nabla^\perp \left (H_0^{(1)} (kr) \right ) \cdot \bpsi  \right )
\right) \right | = 0 \; , \nonumber
\end{equation}
so that the Stokeslet satisfies the radiation condition.
\end{proof}
A consequence of the above lemma is that the oscillatory Stokes
single layer potential satisfies the radiation condition.
\begin{cor}
Suppose that $\Gamma$ is the boundary of a region $\Omega$
and is $C^{2}$. 
Suppose that $\bmu \in C(\Gamma)\times C(\Gamma)$, then
the oscillatory Stokes single layer potential $\bS[\bmu]$,
as defined in~\cref{eq:singlelayer}, satisfies the radiation condition.
\end{cor}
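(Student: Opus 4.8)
The plan is to reduce the statement about the single layer potential $\bS[\bmu]$ to the preceding lemma about the oscillatory Stokeslet via linearity and a limiting argument. First I would recall that, by definition in \cref{eq:singlelayer}, $\bS[\bmu](\xx) = \int_\Gamma \GG(\xx,\yy)\bmu(\yy)\,dS(\yy)$, so $\bS[\bmu]$ is a superposition of oscillatory Stokeslets with sources $\yy$ ranging over the compact set $\Gamma$ and strengths $\bmu(\yy)\,dS(\yy)$. The radiation condition in \cref{def:radcond} involves the quantity $\bt - ik\bu$ on $\partial B_r(0)$, which depends linearly (through first derivatives, which pass under the integral for $\xx$ away from $\Gamma$) on the pair $(\bu,p)$; hence $\bsigma_\bS[\bmu]\cdot\bnu - ik\bS[\bmu]$ is itself the integral over $\Gamma$ of the corresponding expression for a unit Stokeslet centered at $\yy$.

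The key steps, in order, would be: (i) fix $r$ large enough that $\Gamma \subset B_{r/2}(0)$, so that for $\xx \in \partial B_r(0)$ and $\yy \in \Gamma$ we have $|\xx - \yy| \geq r/2$ and the kernel and all needed derivatives are smooth in both variables; (ii) differentiate under the integral sign to write $\bt_\bS[\bmu] - ik\,\bS[\bmu]$ on $\partial B_r(0)$ as $\int_\Gamma \bigl(\btau_{\yy}(\xx) - ik\,\GG(\xx,\yy)\bigr)\bmu(\yy)\,dS(\yy)$, where $\btau_{\yy}$ denotes the traction of the Stokeslet centered at $\yy$ evaluated with $\bnu = \xx/|\xx|$; (iii) invoke the preceding lemma, which gives that for each fixed $\yy$ the integrand is $\littleo(1/\sqrt r)$ uniformly in the direction $\xx/|\xx|$ — but one must be careful that the lemma was stated for a Stokeslet centered at the origin, so a translation argument is needed, noting that translating the source by a bounded amount $\yy$ only perturbs $r = |\xx - \yy|$ versus $|\xx|$ by $\bigo(1)$, which does not affect the $\littleo(1/\sqrt r)$ rate nor the uniformity in direction; (iv) multiply by $\sqrt r$, bring it inside the integral, and conclude by the dominated convergence theorem (or simply by uniformity of the bound in $\yy \in \Gamma$, since $\bmu$ is continuous and $\Gamma$ has finite length) that $\sqrt r\,|\bt_\bS[\bmu] - ik\,\bS[\bmu]| \to 0$ uniformly in direction as $r \to \infty$.

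The main obstacle I anticipate is making step (iii) genuinely uniform: the per-Stokeslet estimate in the lemma was derived for a source at the origin, and one needs the $\littleo(1/\sqrt r)$ decay to hold \emph{uniformly over all source points $\yy \in \Gamma$} so that it survives integration against $dS(\yy)$. This requires tracking the Hankel-function asymptotics $H_0^{(1)}(k|\xx-\yy|) = \sqrt{2/(\pi k|\xx-\yy|)}\,e^{i(k|\xx-\yy|-\pi/4)}(1 + \bigo(1/|\xx-\yy|))$ with error bounds that are uniform once $|\xx - \yy|$ is bounded below — which holds here since $\yy$ ranges over a compact set and $|\xx| = r \to \infty$ — and similarly that the $\log|\xx-\yy|$ contributions and their derivatives remain $\littleo(1/\sqrt r)$ uniformly in $\yy \in \Gamma$. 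Once this uniformity is in hand, the rest is a routine application of linearity and the triangle inequality, so the corollary follows directly.
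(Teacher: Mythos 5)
Your proposal is correct and follows the same route the paper intends: the paper states the corollary without proof as an immediate consequence of the Stokeslet lemma, implicitly relying on exactly the superposition-plus-uniformity argument you spell out. Your attention to the uniformity of the decay over source points $\yy$ in the compact set $\Gamma$ is the right (and only) point needing care, and your treatment of it is sound.
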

Unfortunately, the stresslet, as defined in~\cref{eq:ostress}, 
does not necessarily satisfy the radiation condition.
The reason for failure is the logarithmic growth of 
the pressure at $\infty$.
However, this turns out to be a rank-one issue and 
the oscillatory Stokes double layer potential
does satisfy the radiation condition 
if the density satisfies an integral constraint.
The following lemma proves this result.

\begin{lem}
Suppose that $\Gamma$ is the boundary of a region $\Omega$
and is $C^{2}$. 
Suppose that $\bmu \in C(\Gamma) \times C(\Gamma)$ and satisfies
$\int_{\Gamma} \bmu \cdot \bnu dS = 0$, where
$\bnu$ denotes the outward normal to the curve $\Gamma$.
Then, the oscillatory Stokes 
double layer potential $\bD[\bmu]$, as defined in \eqref{eq:doublelayer},
also satisfies the radiation condition.
\end{lem}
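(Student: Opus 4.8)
The plan is to locate, within the velocity field $\bu=\bD[\bmu]$ and its associated pressure $p$, the single non-decaying contribution --- a logarithmically growing pressure monopole --- show that the hypothesis $\int_\Gamma\bmu\cdot\bnu\,dS=0$ annihilates it, and then treat everything else exactly as in the lemma for the Stokeslet. Throughout one works with $\xx$ far from $\Gamma$, where $\bD[\bmu]$ is smooth and one may differentiate under the integral in \eqref{eq:stokesdlkernel}. First I would split each kernel there using $\Gbh=\tfrac1{k^2}\Glap-\tfrac1{k^2}\Ghelm$ into a ``Laplace part'' built from derivatives of $\Glap$ and a ``Helmholtz part'' built from derivatives of $H_0^{(1)}(k|\xx-\yy|)$. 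Writing $\bmu=\mu_\nu\bnu+\mu_\tau\btau$, the Laplace part of $\bu$ is $-\int_\Gamma\nabla\Glap(\xx,\yy)\mu_\nu(\yy)\,dS(\yy)$ together with terms that are third-order derivatives of $\log|\xx-\yy|$; hence it is $\bigo(1/|\xx|)$ with gradient $\bigo(1/|\xx|^2)$, while the Helmholtz part and all of its $\xx$-derivatives decay like $|\xx|^{-1/2}$.

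Next I would read off an admissible pressure. Since $(\Delta+k^2)\Gbh=\Glap$ and $(\Delta+k^2)(-\nabla\Glap)=\nabla(-k^2\Glap)$ away from the source, applying $(\Delta+k^2)$ to \eqref{eq:stokesdlkernel} and integrating shows that the pressure of $\bD[\bmu]$ is
\begin{equation}
  p(\xx)=-k^2\int_\Gamma\Glap(\xx,\yy)\,\mu_\nu(\yy)\,dS(\yy)+\bigo(1/|\xx|),\nonumber
\end{equation}
the remainder being the harmonic-conjugate fields attached to the $\nabla^\bot$ terms (second and third derivatives of $\log|\xx-\yy|$, which decay). This is the ``rank-one'' obstruction alluded to in the text: replacing $\log|\xx-\yy|$ by $\log|\xx|$ costs only $\bigo(1/|\xx|)$ for $\yy\in\Gamma$, so $p(\xx)=-\tfrac{k^2}{2\pi}\log|\xx|\int_\Gamma\mu_\nu\,dS+\bigo(1/|\xx|)$, and the hypothesis $\int_\Gamma\bmu\cdot\bnu\,dS=\int_\Gamma\mu_\nu\,dS=0$ --- which is exactly $\cW[\bmu]=0$ in the notation of \cref{lem:propnullspacecorr} --- forces $p(\xx)=\bigo(1/|\xx|)$.

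With this in hand I would assemble the radiation estimate on $\partial B_r(0)$ with $\bnu=\xx/|\xx|$. Writing $\bt-ik\bu=-p\bnu+2\be(\bu)\bnu-ik\bu$ and decomposing $\bu=\bu_L+\bu_H$: the pressure term is $\bigo(1/r)$; from the Laplace part, $2\be(\bu_L)\bnu=\bigo(1/r^2)$ and $ik\bu_L=\bigo(1/r)$; each of these is $\littleo(1/\sqrt r)$. For the Helmholtz part the kernels are fixed finite linear combinations of derivatives of $H_0^{(1)}(k|\xx-\yy|)$, so the computation in the Stokeslet lemma transfers essentially verbatim: using $H_0^{(1)}(k\rho)=\sqrt{2/(\pi k\rho)}\,e^{i(k\rho-\pi/4)}(1+\bigo(1/\rho))$ together with $\partial_{\nu_\xx}|\xx-\yy|=1+\bigo(|\yy|^2/|\xx|^2)$ for $\yy\in\Gamma$, each radial $\xx$-derivative acts as multiplication by $ik$ up to an $\littleo(|\xx|^{-1/2})$ error, while the tangential derivative contributions (in particular $\nabla(\bu_H\cdot\bnu)$) are $\littleo(|\xx|^{-1/2})$ because the non-radial dependence of the kernel is governed by $\bigo(|\yy|/|\xx|)$. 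Hence $2\be(\bu_H)\bnu-ik\bu_H=\littleo(1/\sqrt r)$ uniformly in direction, and adding the pieces gives $\sqrt r\,|\bt-ik\bu|\to0$.

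I expect the pressure step to be the main obstacle: one must correctly attach a pressure to the stresslet-based double layer and recognize that its only non-decaying part is the logarithmic monopole $-k^2\int_\Gamma\Glap(\xx,\yy)\mu_\nu(\yy)\,dS(\yy)$, whose amplitude is precisely the linear functional $\bmu\mapsto\int_\Gamma\bmu\cdot\bnu\,dS$ killed by the hypothesis (and checking that no additive constant survives in $p$). The Helmholtz-part estimate is a routine transcription of the Stokeslet lemma; its only new feature is that the source point $\yy$ now ranges over the bounded set $\Gamma$ rather than sitting at the origin, which affects only lower-order terms in $1/|\xx|$.
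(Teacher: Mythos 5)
Your proposal is correct and follows essentially the same route as the paper: both reduce the problem to the decay of the pressure, compute $\nabla p^{\bD}=(\Delta+k^2)\bD[\bmu]$ to isolate the logarithmic monopole with amplitude $\int_\Gamma \mu_\nu\,dS$, kill it with the hypothesis via a multipole-type expansion of $\log|\xx-\yy|$, and dispatch the remaining terms by the same asymptotics used for the Stokeslet. The only cosmetic difference is that the paper works with $|\nabla p^{\bD}|=\bigo(1/r^2)$ and invokes harmonicity rather than estimating $p$ itself, which sidesteps the additive-constant check you flag.
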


\begin{proof}
We only establish the
decay of the pressure, which we will denote by $p^\bD$;
the rest of the terms in \eqref{eq:radcond} can be bounded
using an argument like that for the Stokeslet above.
Because $p^\bD$ is harmonic in the exterior of any disc
containing $\Gamma$, it is sufficient to show that
$|\nabla p^\bD| = \bigo (1/r^2)$. Let $\btau$ denote the
positively oriented tangent to the curve $\Gamma$ and
$\mu_\nu(\yy)$ and $\mu_\tau(\yy)$ denote $\bmu(\yy) \cdot \bnu(\yy)$ and
$\bmu(\yy) \cdot \btau(\yy)$, respectively. Substituting
$\bD \bmu$ into \eqref{eq:ostokes}, we obtain

\begin{align*}
\nabla p^\bD(\xx) &= (\Delta + k^2) \bD \bmu(\xx) \\
&= \int_\Gamma \left (-k^2 \nabla \Glap (\xx,\yy) +
2 \nabla^\perp \partial_{\nu\tau} \Glap (\xx,\yy) \right ) \mu_\nu(\yy)
\, dS(\yy) \\
& \qquad + \int_\Gamma \nabla^\perp (\partial_{\tau\tau}-\partial_{\nu\nu})\Glap \mu_\tau(\yy)
\, dS(\yy) \; .
\end{align*}
The other terms are higher-order derivatives
of $\Glap$, so it is sufficient to show that the term

\begin{align*}
|\nabla p_1(\xx)| &:= \left |-k^2 \nabla \int_\Gamma \Glap(\xx,\yy)
\mu_\nu(\yy) \, dS(\yy) \right |
\end{align*}
is $\bigo (1/r^2)$. In the following, let $z = x_1 + i x_2$ be the
point corresponding to $\xx$ in the complex plane and let $R$
be the radius of some disc containing $\Gamma$. If $|\xx| > 2R$,
we can use the standard multipole expansion of $\log(z-(y_1+iy_2))$
and the assumption that $\int_\Gamma \bmu \cdot \bnu = 0$
to obtain
\begin{align*}
|\nabla p_1(\xx)| &= \frac{k^2}{2\pi} \left |  \partial_z \int_\Gamma \log(z-(y_1+iy_2))
\mu_\nu(\yy) dS(\yy) \right | \\
&= \frac{k^2}{2\pi} \left |  \partial_z  \left ( \log(z) \int_\Gamma \mu_\nu(\yy) \, dS(\yy)
+ \sum_{l=1}^\infty \frac{1}{z^l} \int_\Gamma \left( y_1+iy_2 \right)^l \mu_\nu(\yy) \, dS(\yy)
\right ) \right | \\
&= \frac{k^2}{2\pi} \left | \sum_{l=1}^\infty \frac{-l}{z^{l+1}}
\int_\Gamma \left( y_1+iy_2 \right)^l \mu_\nu(\yy) \, dS(\yy) \right | \\
&= \bigo (1/r^2) \; .
\end{align*}
\end{proof}

\subsection{Boundary value problems --- exterior}

Let $E$ and $\Gamma$ be as in the previous subsection.
The radiation condition allows for a well-posed formulation
of the exterior boundary value problems, which we
summarize in
\cref{def:dir_exterior,def:neu_exterior,def:imp_exterior} below.

\begin{definition}[Exterior Dirichlet problem]
  \label{def:dir_exterior}
  Let $\ff \in C(\Gamma)$ be given. Find $(\bu,p) \in A(E)$
  such that
  \begin{equation}
  \begin{aligned} \label{eq:dir_exterior}
    \Delta \bu + k^{2} \bu &= \nabla p \quad \bx \in E \, ,\\
    \nabla \cdot \bu &= 0 \quad \bx \in E \, ,  \\
    \bu &= \ff \quad \bx \in \Gamma \, , 
  \end{aligned}
  \end{equation}
  and $(\bu,p)$ satisfies the radiation condition in
  \cref{def:radcond}.
\end{definition}
\begin{definition}[Exterior Neumann problem]
  \label{def:neu_exterior}  
  Let $\bg \in C(\Gamma)$ be given. Find $(\bu,p) \in A(E)$
  such that
  \begin{equation}
  \begin{aligned} \label{eq:neu_exterior}
    \Delta \bu + k^{2} \bu &= \nabla p \quad \bx \in E \, ,\\
    \nabla \cdot \bu &= 0 \quad \bx \in E \, ,  \\
    \bt &= \bg \quad \bx \in \Gamma \, ,
  \end{aligned}
  \end{equation}
  and $(\bu,p)$ satisfies the radiation condition in
  \cref{def:radcond}.
\end{definition}

\begin{definition}[Exterior impedance problem]
  \label{def:imp_exterior}  
  Let $\bh \in C(\Gamma)$ be given and suppose that
  $\eta \in \mathbb{C}$ with $\Re{(\eta)} > 0$ and $\Im{(\eta)} \ge 0$. 
  Find $(\bu,p) \in A(E)$
  such that
  \begin{equation}
  \begin{aligned} \label{eq:imp_exterior}
    \Delta \bu + k^{2} \bu &= \nabla p \quad \bx \in E \, ,\\
    \nabla \cdot \bu &= 0 \quad \bx \in E \, ,  \\
    \bt + i\eta \bu &= \bh \quad \bx \in \Gamma \, ,
  \end{aligned}
  \end{equation}
  and $(\bu,p)$ satisfies the radiation condition in
  \cref{def:radcond}.
\end{definition}

\subsection{Uniqueness results}

Before moving on to the exterior uniqueness theorems,
we establish the well-known result that
the $k$ corresponding to interior eigenvalues, $k^2$,
are real-valued.

\begin{thrm}
  Let $\Omega$ be a bounded domain and suppose that
  $\Im (k) \neq 0$. Then both the interior
  Dirichlet and Neumann boundary value problems have
  unique solutions.
\end{thrm}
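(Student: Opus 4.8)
The plan is to run the standard energy/Green's-identity argument, which for second-order problems shows that a nonzero imaginary part of $k^2$ forces the solution to vanish. Here $k^2$ may be complex (we only know $\Im(k)\neq 0$), so the energy identity will produce a relation whose imaginary part carries a factor of $\Im(k^2)\neq 0$, and the vanishing of the relevant $L^2$ norm of $\bu$ will follow.

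First I would take $(\bu,p)\in A(\Omega)$ solving the homogeneous interior Dirichlet problem (so $\ff=\bz$). Multiply the momentum equation $\Delta\bu+k^2\bu=\nabla p$ by $\overline{\bu}$, integrate over $\Omega$, and integrate by parts. Using the divergence-free condition $\nabla\cdot\bu=0$ to kill the pressure term, and using $\bu=\bz$ on $\Gamma$ to kill the boundary terms, one obtains an identity of the schematic form
\begin{equation}
  -\int_\Omega 2\,e_{ij}(\bu)\overline{e_{ij}(\bu)}\,dV + k^2 \int_\Omega |\bu|^2\,dV = 0 \; , \nonumber
\end{equation}
i.e. the Dirichlet energy (expressed via the strain tensor, using a Korn-type identity for divergence-free fields so that $\int 2|e(\bu)|^2 = \int |\nabla\bu|^2$) equals $k^2\int|\bu|^2$. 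Since the Dirichlet energy is a nonnegative real number, taking imaginary parts gives $\Im(k^2)\int_\Omega|\bu|^2\,dV=0$; as $\Im(k)\neq 0$ implies $\Im(k^2)=2\Re(k)\Im(k)$, which is nonzero unless $\Re(k)=0$, I would handle the two cases separately. If $\Im(k^2)\neq 0$ then $\bu\equiv 0$ in $\Omega$ immediately. If $\Re(k)=0$, so $k^2=-\alpha^2<0$ is real and negative, then the real part of the identity gives Dirichlet energy $=-\alpha^2\int|\bu|^2\le 0$, forcing both $\nabla\bu=0$ and $\bu=0$ on $\Gamma$, hence $\bu\equiv 0$. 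Either way $\bu\equiv 0$, and then $\nabla p=\Delta\bu+k^2\bu=\bz$ shows $p$ is constant, which we may take to be zero; uniqueness follows.

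For the interior Neumann problem the argument is parallel: the natural pairing is $\int_\Gamma \bt\cdot\overline{\bu}\,dS$, which after integration by parts equals the same combination of Dirichlet energy and $k^2\int|\bu|^2$ up to sign; with $\bt=\bz$ on $\Gamma$ the boundary term vanishes again and the same case analysis applies, giving $\bu\equiv 0$ and $p$ constant. (For Neumann the pressure is only determined up to a constant, as usual, so "unique" should be read modulo constant pressures — or the problem statement implicitly normalizes this.)

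The main obstacle is purely bookkeeping rather than conceptual: one must justify the integrations by parts for the regularity class $A(\Omega)$ (solutions are only assumed $C^2$ in $\Omega$ and continuous up to $\overline{\Omega}$), which is handled by the now-standard device of working on $\Omega_\epsilon=\{x\in\Omega: \mathrm{dist}(x,\Gamma)>\epsilon\}$, applying the divergence theorem there, and passing to the limit $\epsilon\to 0^+$ using \cref{cor:analytic} (interior analyticity) together with the boundary continuity — exactly as in \cite[Ch.~3]{colton1983integral} for Helmholtz. One also needs the Korn-type identity $\int_\Omega |\nabla\bu|^2 = \int_\Omega 2|e(\bu)|^2$ valid for divergence-free fields vanishing on the boundary (and, for the Neumann case, a version using the traction boundary term); this is classical for Stokes but should be cited or verified. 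With those ingredients the proof is short.
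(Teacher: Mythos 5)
Your proposal is correct and follows essentially the same route as the paper: the paper's proof rests on the Green's-like energy identity $\int_\Omega |2\be(\bu)|^2 - \overline{k}^2 |\bu|^2 \, dV = \int_\Gamma \bu \cdot \overline{\bt}\, dS$, whose right-hand side vanishes under either homogeneous boundary condition, after which taking real and imaginary parts (exactly your two cases $\Im(k^2)\neq 0$ and $\Re(k)=0$) forces $\bu\equiv 0$. Your additional remarks on Korn-type identities and on justifying the integration by parts for the class $A(\Omega)$ are sound elaborations of details the paper leaves implicit.
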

\begin{proof}
  A couple applications of the divergence theorem establish
  that
  \begin{equation} \label{eq:greenlike}
    \int_\Omega |2\be(\bu)|^2 - \overline{k}^2 |\bu|^2 \, dV
    = \int_\Gamma \bu \cdot \overline{\bt} \, dS \; . 
  \end{equation}
  Suppose that either $\bu = 0$ or $\bt = 0$ on $\Gamma$.
  Then, the right hand side of \cref{eq:greenlike} is
  zero. 
  Taking the real and imaginary parts of \cref{eq:greenlike},
  it is clear that $\bu \equiv 0$, if $\text{Im}(k) \neq 0$.
\end{proof}

In the following lemma, we prove uniqueness for the
interior impedance problem.

\begin{thrm}
  Let $\Omega$ be a bounded domain and suppose that
  $\Re(k),\Im (k) > 0$. Then the interior
  impedance problem has a unique solution.
\end{thrm}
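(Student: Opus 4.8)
The plan is to mimic the energy argument used for the interior Dirichlet and Neumann cases, but to extract a stronger conclusion from the impedance boundary condition by examining both the real and imaginary parts of the Green-type identity. First I would take two solutions and let $(\bu,p)$ denote their difference, which satisfies the homogeneous oscillatory Stokes equations in $\Omega$ together with the homogeneous impedance condition $\bt - i\eta \bu = \bz$ on $\Gamma$. The starting point is again the identity
\begin{equation*}
  \int_\Omega |2\be(\bu)|^2 - \overline{k}^2 |\bu|^2 \, dV
  = \int_\Gamma \bu \cdot \overline{\bt} \, dS \; ,
\end{equation*}
obtained from a couple of applications of the divergence theorem exactly as in the previous theorem.

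Next I would substitute the boundary condition $\bt = i\eta \bu$ into the right-hand side, giving $\int_\Gamma \bu \cdot \overline{i\eta \bu}\, dS = -i\overline{\eta} \int_\Gamma |\bu|^2 \, dS$. Since $k$ is real and positive in this regime, $\overline{k}^2 = k^2$ is real, so the left-hand side is real. Hence the right-hand side must be real, which forces $\Re(\overline{\eta}) \int_\Gamma |\bu|^2\, dS$ to carry the whole quantity and, more to the point, the imaginary part $-\Re(\eta)\int_\Gamma|\bu|^2\,dS$ (collecting the $i$-coefficient carefully) must vanish; since $\Re(\eta) > 0$ by hypothesis, this yields $\bu = \bz$ on $\Gamma$. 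Wait — I should be careful here: if $k$ is genuinely real then the left side is real and equating imaginary parts immediately gives $\bu|_\Gamma = \bz$; the hypothesis $\Im(k) > 0$ in the statement means $\overline{k}^2$ is genuinely complex, so instead I would take imaginary parts of the full identity and use that both $\Im(\overline{k}^2)\int_\Omega|\bu|^2\,dV$ and the boundary term contribute with a definite sign, concluding first that $\int_\Omega |\bu|^2 = 0$ when $\Im(\overline k^2)\neq 0$, hence $\bu \equiv 0$ in $\Omega$. I would present whichever sign bookkeeping is cleanest for the stated hypothesis $\Re(k),\Im(k)>0$, noting that $\Im(\overline{k}^2) = -2\Re(k)\Im(k) \neq 0$, so the interior contribution alone already forces $\bu \equiv 0$ in $\Omega$.

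Once $\bu \equiv 0$ in $\Omega$, the pressure satisfies $\nabla p = \Delta \bu + k^2\bu = \bz$, so $p$ is constant; and from $\bt - i\eta\bu = \bz$ on $\Gamma$ with $\bu \equiv 0$ we get $\bt = \bz$, i.e. $\bsigma\cdot\bnu = -p\bnu = \bz$ on $\Gamma$, which forces $p = 0$. Therefore the only solution of the homogeneous problem is trivial, and uniqueness follows.

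\textbf{Main obstacle.} The only delicate point is the sign bookkeeping in separating the real and imaginary parts of the Green-type identity: one must verify that the boundary term $-i\overline{\eta}\int_\Gamma|\bu|^2\,dS$ and the volume term $-\overline{k}^2\int_\Omega|\bu|^2\,dV$ combine so that the conditions $\Re(\eta)>0$, $\Im(\eta)\ge 0$, and $\Re(k),\Im(k)>0$ together pin down the sign. Given the hypotheses as stated, the cleanest route is to note that $\Im(\overline{k}^2)\neq 0$ already kills $\bu$ in the interior by itself, so the impedance condition is in fact only needed to recover $p$; I would remark on this and note that the impedance hypotheses are stated in this form so that the same lemma machinery applies uniformly with the exterior impedance problem treated later. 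Establishing \eqref{eq:greenlike} itself is routine given the regularity assumed in $A(\Omega)$ and the standard manipulations already invoked in the preceding proof.
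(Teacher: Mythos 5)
Your proposal is correct and follows essentially the same route as the paper: substitute the homogeneous impedance condition $\bt = i\eta\bu$ into the identity \cref{eq:greenlike} and take imaginary parts, obtaining $2\Re(k)\Im(k)\int_\Omega|\bu|^2\,dV + \Re(\eta)\int_\Gamma|\bu|^2\,dS = 0$, whence $\bu\equiv 0$; your extra paragraph recovering $p=0$ from the traction condition is a reasonable addition the paper omits.

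One remark in your closing paragraph is wrong, though: it is not true that ``$\Im(\overline{k}^2)\neq 0$ already kills $\bu$ in the interior by itself, so the impedance condition is only needed to recover $p$.'' The imaginary part of the identity equates $2\Re(k)\Im(k)\int_\Omega|\bu|^2\,dV$ with $-\Re(\eta)\int_\Gamma|\bu|^2\,dS$; if $\Re(\eta)$ were negative these two terms could balance and nothing would follow. The hypothesis $\Re(\eta)>0$ is exactly what makes both terms nonnegative and hence both zero, so the sign condition on $\eta$ is essential to concluding $\bu\equiv 0$ in $\Omega$, not merely to the pressure bookkeeping. Your main argument (``both terms contribute with a definite sign'') already uses this correctly; just drop the contrary aside.
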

\begin{proof}
  Plugging $\bt = i\eta \bu$ in~\cref{eq:greenlike}
  and taking the imaginary part, we get
  \begin{equation}
   2 \Re{(k)} \Im{(k)} \int_{\Omega} |\bu|^2\, dV + 2\Re{(\eta)} \int_{\Gamma}
   |\bu|^2 \, dS = 0 \, ,
  \end{equation}
  from which it is is clear that $\bu \equiv 0$, since
  $\Re{(\eta)}, \Re{(k)}, \Im{(k)} > 0$.
\end{proof}

We now turn our attention to the proofs of the exterior boundary value problems
for the oscillatory Stokes equation. The following lemmas are useful for
proving these results.

\begin{lem}
  \label{lem:rep}
  Let the unbounded region $E$ be given as the exterior
  of a finite collection of bounded domains.
  Suppose that $(\bu,p)$ satisfies the oscillatory Stokes equation in 
  $E$ as well as the radiation condition~\cref{eq:radcond}. 
  Then 
  \begin{multline}
    \label{eq:repinfest}  
    \lim_{r\to\infty}
    \int_{|\by|=r} \left( |\bt|^2 + |k|^2 |\bu|^2 \right) dS +
    2 \Im(k) \int_{E \cap B_{r}(0)} \left(|k|^2 |\bu|^2 + |2\be(\bu)|^2 \right)
    dV \\
    + 2 \Im \left( k \int_{\Gamma} \bu \cdot 
\overline{\bt} dS  \right) = 0
  \end{multline}
  
\end{lem}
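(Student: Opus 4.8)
The plan is to mimic the proof of the analogous identity for the Helmholtz equation (see Colton--Kress \cite[Ch. 3]{colton1983integral}): combine a Green-type identity on the truncated domain $E \cap B_r(0)$ with the radiation condition \cref{eq:radcond}. First I would apply \cref{eq:greenlike}, the Green-like identity established above, but now on the bounded region $E \cap B_r(0)$ whose boundary consists of $\Gamma$ (with normal pointing into $\Omega$) and $\partial B_r(0)$ (with normal $\bnu = \xx/|\xx|$ pointing outward). This gives
\begin{equation*}
  \int_{E \cap B_r(0)} |2\be(\bu)|^2 - \overline{k}^2 |\bu|^2 \, dV
  = \int_{\partial B_r(0)} \bu \cdot \overline{\bt} \, dS - \int_\Gamma \bu \cdot \overline{\bt} \, dS \; ,
\end{equation*}
where the sign on the $\Gamma$ integral reflects that the outward normal for $E \cap B_r(0)$ along $\Gamma$ is $-\bnu$. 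Taking the imaginary part of this identity (after multiplying through by a convenient constant) isolates $\Im(\overline{k}^2) = -2\Re(k)\Im(k)$ times the volume integral of $|\bu|^2$; I would organize this so the volume term $2\Im(k)\int (|k|^2|\bu|^2 + |2\be(\bu)|^2)$ appears, using that $|k|^2 = \Re(k)^2 + \Im(k)^2$ and the strain term is real.

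Next I would handle the boundary integral over $\partial B_r(0)$. The key step is the standard radiation-condition expansion: writing
\begin{equation*}
  \int_{\partial B_r(0)} |\bt - ik\bu|^2 \, dS = \int_{\partial B_r(0)} |\bt|^2 + |k|^2|\bu|^2 \, dS
  - 2\Re\!\left( \overline{ik} \int_{\partial B_r(0)} \bt \cdot \overline{\bu} \, dS \right) \; ,
\end{equation*}
and noting that $|\bt - ik\bu|^2 = o(1/r)$ uniformly by \cref{def:radcond} so that the left side vanishes as $r \to \infty$ (the surface area of $\partial B_r(0)$ grows like $r$). The cross term $-2\Re(\overline{ik}\int \bt \cdot \overline{\bu}) = -2\Re(-i\overline{k}\int \bt\cdot\overline{\bu}) = -2\Im(\overline{k}\int\bt\cdot\overline{\bu}) = 2\Im(k\int\overline{\bt}\cdot\bu)$ (using $\Im(\overline{k}\,\overline{w}) = -\Im(kw)$) can be rewritten in terms of $\int_{\partial B_r(0)} \bu\cdot\overline{\bt}\,dS$, which is exactly the quantity appearing in the Green-like identity above. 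Substituting the Green-like identity to eliminate this $\partial B_r(0)$ cross term in favor of the volume integral over $E \cap B_r(0)$ and the $\Gamma$ integral, and then letting $r \to \infty$, collects precisely the three groups of terms in \cref{eq:repinfest}.

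The main obstacle I expect is bookkeeping the signs and the placement of the complex factor $k$ versus $\overline{k}$ correctly — in particular ensuring the orientation convention for $\bnu$ on $\Gamma$ (pointing into $\Omega$, out of $E$) is applied consistently between the Green-like identity and the final statement, and verifying that the error terms really are $o(1/\sqrt{r})$ pointwise so that after integration over $\partial B_r(0)$ (length $2\pi r$) they still vanish. A secondary technical point is justifying that \cref{eq:greenlike} extends to the region $E \cap B_r(0)$, which is not simply connected and whose boundary is only piecewise $C^2$ (a corner where $\partial B_r(0)$ meets nothing — actually they are disjoint for $r$ large, so this is a non-issue once $\overline{B_R(0)} \supset \overline{\Omega}$); the derivation of \cref{eq:greenlike} via the divergence theorem goes through verbatim on any bounded Lipschitz region where $(\bu,p)$ is smooth. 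Once these sign/asymptotic details are pinned down, the identity follows by direct substitution.
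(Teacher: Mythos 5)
Your proposal is correct and follows essentially the same route as the paper's proof: expand $\int_{|\by|=r}|\bt-ik\bu|^2\,dS$ (which vanishes as $r\to\infty$ by the radiation condition), apply the divergence-theorem/Green identity on the truncated region $E\cap B_r(0)$ to trade the cross term $\int_{|\by|=r}\bu\cdot\overline{\bt}\,dS$ for the volume and $\Gamma$ integrals, and pass to the limit. The sign conventions and the $o(1/r)$ bookkeeping you flag work out exactly as you describe, matching the paper's intermediate identities.
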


\begin{proof}
Since $(\bu,p)$ satisfies the radiation condition, we have that
\begin{equation}
\lim_{r\to\infty} \int_{|\by|=r} | \bt - i k \bu|^2 dS = 
\lim_{r\to\infty} \int_{|\by| =r} \left( |\bt|^2 + |k|^2|\bu|^2 + 2 \Im 
\left( k \bu\cdot \overline{\bt} \right) dS \label{eq:raddecayproof1}
\right) = 0 \, . 
\end{equation}
Since $\bu$ satisfies the oscillatory Stokes equation $E \cap B_{r}(0)$,
using a couple of applications of the divergence theorem, we have that
\begin{equation}
\int_{E\cap B_{r}(0)} |2 \be(\bu)|^2 dV =
-\int_{\Gamma} \bu \cdot \overline{\bt} dS
+ \int_{|\by|=r} \bu \cdot \overline{\bt} dS + \overline{k}^2 
\int_{E \cap B_{r}(0)} |\bu|^2 dV \,. \label{eq:raddecayproof2}
\end{equation}
Combining~\cref{eq:raddecayproof1,eq:raddecayproof2}, we get
\begin{multline*}
\lim_{r\to\infty} \int_{|\by|=r}\left(|\bt|^2 + |k|^2 |\bu|^2 \right) dS 
+ 2 \Im(k)\int_{E \cap B_{r}(0)} \left(|2\be(\bu)|^2 + |k|^2 |\bu|^2 
\right) dV \\
+ 2\Im \left ( k \int_{\Gamma} \bu \cdot \overline{\bt} dS \right) = 0 \, .
\end{multline*}
\end{proof}

In the next lemma, we prove the analogue of Rellich's lemma for the
oscillatory Stokes equation. 
\begin{lem}
  \label{lem:rellich}
  Let the unbounded region $E$ be given as the exterior
  of a finite collection of bounded domains.
  Suppose $k$ is real, $\bu$ satisfies the oscillatory
  Stokes equation in $E$, and that 
\begin{equation}
\lim_{r \to \infty} \int_{|\by|=r} |\bu|^2 dS = 0 
\, . \label{eq:decayatinf}
\end{equation}
Then each component of $\bu$ is harmonic in $E$.
\end{lem}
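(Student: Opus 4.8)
The plan is to work componentwise and reduce the statement to a one-dimensional decay analysis via a Fourier expansion in the far field, in the spirit of the classical proof of Rellich's lemma for the Helmholtz equation. Taking the divergence of the momentum equation in \cref{eq:ostokes} and using $\nabla\cdot\bu=0$ shows that $p$ is harmonic in $E$; applying $\Delta$ to the momentum equation and using $\Delta p=0$ then shows that each component $\bu_j$ satisfies the oscillatory biharmonic equation $\Delta(\Delta+k^2)\bu_j=0$ in $E$. Moreover, solutions of the oscillatory Stokes equations are real-analytic in the interior of their domain (this is a local statement, so \cref{cor:analytic} applies on any ball whose closure lies in $E$), so each $\bu_j$ is real-analytic on $E$.

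Next I would localize to the far field. Fix $R>0$ so that all of the obstacles lie in $B_R(0)$, whence $\{\bx : |\bx|>R\}\subset E$. Writing $\bu_j$ in polar coordinates $(r,\theta)$ and expanding $\bu_j(r,\theta)=\sum_n u_{j,n}(r)e^{in\theta}$ (the sum over all integers $n$), the biharmonic equation becomes, mode by mode for $r>R$, the fourth-order ODE $L_n(L_n+k^2)u_{j,n}=0$, where $L_n f=f''+r^{-1}f'-n^2 r^{-2}f$ is the $n$-th radial part of $\Delta$; term-by-term differentiation is legitimate because $\bu_j$ is smooth for $r>R$. Since $k$ is real and nonzero, $L_n$ and $L_n+k^2$ commute and have trivial common kernel, so the solution space of the product is the direct sum of the two, giving
\[
u_{j,n}(r)=
\begin{cases}
\alpha_n r^{|n|}+\beta_n r^{-|n|}+\gamma_n J_n(kr)+\delta_n Y_n(kr), & n\neq 0,\\
\alpha_0+\beta_0\log r+\gamma_0 J_0(kr)+\delta_0 Y_0(kr), & n=0.
\end{cases}
\]

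The hypothesis \cref{eq:decayatinf} enters through Parseval's identity, $\int_{|\by|=r}|\bu|^2\,dS=2\pi r\sum_{j,n}|u_{j,n}(r)|^2$, which forces $\sqrt{r}\,|u_{j,n}(r)|\to 0$ as $r\to\infty$ for every $j$ and $n$. I would then peel off the coefficients using the large-$r$ asymptotics of the basis functions: $r^{|n|}$ (for $n\neq 0$), $1$, and $\log r$ either grow or fail to be $o(r^{-1/2})$, so their coefficients vanish; and since $J_n(kr)$ and $Y_n(kr)$ behave like $\sqrt{2/(\pi k r)}$ times bounded, out-of-phase oscillatory factors, one has $\limsup_{r\to\infty}\sqrt{r}\,|\gamma_n J_n(kr)+\delta_n Y_n(kr)|>0$ whenever $(\gamma_n,\delta_n)\neq(0,0)$, and for $|n|\ge 1$ the surviving term $\beta_n r^{-|n|}$ decays faster than $r^{-1/2}$ and cannot cancel it. Hence $u_{j,0}\equiv 0$ and $u_{j,n}(r)=\beta_n r^{-|n|}$ for $n\neq 0$, so every surviving Fourier mode of $\bu_j$ is harmonic and $\Delta\bu_j=0$ on $\{\bx : |\bx|>R\}$. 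Since $\Delta\bu_j$ is real-analytic on the connected open set $E$ and vanishes on the nonempty open subset $\{\bx : |\bx|>R\}$, the identity theorem yields $\Delta\bu_j\equiv 0$ on $E$.

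I expect the asymptotic peeling to be the main obstacle: one must make the $\limsup$ estimate precise for an arbitrary complex combination $\gamma_n J_n(kr)+\delta_n Y_n(kr)$ --- in particular checking that the slowly decaying harmonic mode $r^{-1}e^{\pm i\theta}$, itself $o(r^{-1/2})$, cannot absorb the $r^{-1/2}$-scale oscillation of the Bessel terms --- together with a clean argument that $o(r^{-1/2})$ excludes the growing modes. The remaining ingredients (harmonicity of $p$, the ODE reduction, Parseval's identity, analytic continuation) are routine. An alternative to the scalar biharmonic reduction is to set $\bv=\bu-\nabla p/k^2$, which solves the homogeneous vector Helmholtz equation and vanishes identically precisely when each $\bu_j$ is harmonic, and then appeal to Rellich's lemma for Helmholtz; but that route still requires controlling $\nabla p$, which amounts to essentially the same analysis.
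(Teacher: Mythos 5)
Your proposal is correct and follows essentially the same route as the paper's proof: reduce each component to the oscillatory biharmonic equation, expand in Fourier modes in the far field, use Parseval to get $\sqrt{r}\,|u_{j,n}(r)|\to 0$, eliminate the growing and Bessel/Hankel modes from the four-dimensional ODE solution space, and extend harmonicity to all of $E$ by analyticity. Your version is somewhat more careful about the asymptotic ``peeling'' step and the final identity-theorem argument, but the substance is identical.
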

\begin{proof}
We first note that each component of $\bu= (u_{1},u_{2})$ satisfies the 
oscillatory biharmonic equation in $E$, i.e.
\begin{equation}
\Delta (\Delta + k^2) u_{j} = 0 \quad j=1,2 \,. \nonumber
\end{equation}
For $r$ sufficiently large, we can express $u_{j}$ in the Fourier basis as
\begin{equation}
u_{j}(r,\theta) = \sum_{n=-\infty}^{\infty} a_{j,n}(r) e^{i n \theta}  \quad 
j=1,2 \, . \nonumber
\end{equation}
Using Parseval's identity then
\begin{equation}
\int_{|\by|=r} |\bu|^2 dS = r\sum_{n=-\infty}^{\infty} |a_{1,n}(r)|^2  +
|a_{2,n}(r)|^2 \, . \nonumber
\end{equation}
Since $\bu$ satisfies~\cref{eq:decayatinf}, we conclude that
\begin{equation}
\lim_{r\to\infty} r|a_{j,n}(r)|^2 = 0 \quad j=1,2 \, , \label{eq:adecay}
\end{equation}
Since $u_{j}$, $j=1,2$ satisfies the oscillatory biharmonic equation,
the functions $a_{j,n}$ are linear combinations of 
\begin{equation}
r^{|n|}, r^{-|n|}, H^{(1)}_{n}(k r), H^{(2)}_{n}(k r) \, , \quad
n\neq 0 \, , \nonumber
\end{equation}
and
\begin{equation}
1, \log{(r)}, H^{(1)}_{0}(k r), H^{(2)}_{0}(k r) \quad n=0 \, ,  \nonumber
\end{equation} 
where $H_{n}^{(1),(2)}(\cdot)$ are the Hankel functions of the first and
second kind of order $n$.
Since $a_{j,n}(r)$ satisfy~\cref{eq:adecay}, and using the asymptotic 
expansion of $H_{n}^{(1),(2)}(kr)$ when $k$ and $r$ are real-valued, we note
that the projection of $a_{j,n}$ on $r^{|n|}$, and $H_{n}^{1,2}(k r)$
must be zero. Thus, for sufficiently large $r$,
\begin{equation}
u_{j}(r,\theta) = \sum_{n=-\infty}^{\infty} \frac{a_{j,n} e^{i n \theta}}{r^{|n|}} 
\, , \nonumber
\end{equation}
i.e. $u_{j}$ is harmonic 
in $B_{r}(0)^{c}$.
Finally, by \cref{cor:analytic}, $\bu$ is
analytic in $E$. Therefore, each $u_j$ is harmonic
throughout $E$.
\end{proof}

\begin{remark} \label{rmk:harmu}
  Note that if $\bu$ satisfies the assumptions
  of~\cref{lem:rellich}, then each component is harmonic
  and thus $\bu$ satisfies
\begin{align}
k^2 \bu &= \nabla p \label{eq:massconsred} \; .
\end{align}
\end{remark}

\begin{remark}
  It should be noted that in~\cref{lem:rellich}, $\bu$ need
  not be a radiating solution. All that is assumed of $\bu$
  is that it satisfies the oscillatory Stokes equations in $E$.
\end{remark}

We now have the results needed to establish the
uniqueness of exterior boundary value problems.

\begin{thrm}[Uniqueness of the Exterior Dirichlet Problem]
  \label{thrm:unique_dir_ext}
  Let the unbounded region $E$ be given as the exterior
  of a finite collection of bounded domains.
  Suppose that $\Im(k)\geq 0$ and 
  that $(\bu,p)$ is a radiating solution to the oscillatory Stokes
  equation in $E$ with $\bu =0$ on the boundary $\Gamma$, then
  $\bu \equiv 0$ in $E$.
\end{thrm}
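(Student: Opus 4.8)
The plan is to mimic the Colton--Kress proof of uniqueness for the exterior Dirichlet problem for the Helmholtz equation, adapted to the oscillatory Stokes setting via the machinery already assembled. First I would split into two cases according to whether $\Im(k)>0$ or $k$ is real and positive. In the case $\Im(k)>0$, I would apply \cref{lem:rep}: since $\bu=0$ on $\Gamma$, the boundary term $2\Im(k\int_\Gamma \bu\cdot\overline{\bt}\,dS)$ vanishes, leaving
\begin{equation*}
\lim_{r\to\infty}\int_{|\by|=r}\left(|\bt|^2+|k|^2|\bu|^2\right)dS + 2\Im(k)\int_{E}\left(|k|^2|\bu|^2+|2\be(\bu)|^2\right)dV = 0 \, .
\end{equation*}
Both terms are non-negative (the first as a limit of non-negative quantities, which exists), and $\Im(k)>0$, so each integral must vanish; in particular $\int_E |k|^2|\bu|^2\,dV=0$, forcing $\bu\equiv 0$.

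For the harder case $k>0$ real, the strategy is: (i) use \cref{lem:rep} with the vanishing boundary term to conclude $\lim_{r\to\infty}\int_{|\by|=r}\left(|\bt|^2+k^2|\bu|^2\right)dS=0$, hence in particular the decay hypothesis \cref{eq:decayatinf} of \cref{lem:rellich} holds; (ii) invoke \cref{lem:rellich} to deduce that each component of $\bu$ is harmonic in $E$, and by \cref{rmk:harmu} that $k^2\bu=\nabla p$ there; (iii) observe that $\bu$ is then a harmonic, divergence-free vector field on $E$ decaying at infinity, which together with the Dirichlet condition $\bu=0$ on $\Gamma$ should force $\bu\equiv 0$. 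For step (iii) I would argue as follows: since $\bu$ is harmonic and decays like $\int_{|\by|=r}|\bu|^2\,dS\to 0$, the multipole/Fourier expansion from the proof of \cref{lem:rellich} shows each $u_j=\sum_{n\ne 0} a_{j,n} e^{in\theta}/r^{|n|}$ outside a large disc, so in fact $|\bu|=\bigo(1/r)$ and $\bu\to 0$ at infinity. Then each $u_j$ is a bounded harmonic function on $E$ vanishing on $\partial E=\Gamma$ and vanishing at infinity; since $E$ is the exterior of finitely many bounded domains in $\R^2$, the maximum principle for harmonic functions on exterior planar domains (applied on $(E\cap B_r)\setminus(\text{small discs})$ and letting $r\to\infty$) gives $u_j\equiv 0$, hence $\bu\equiv 0$, and then $\nabla p = k^2\bu = 0$ so $p$ is constant.

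I expect the main obstacle to be step (iii) in the real-$k$ case, specifically being careful that the decay $\int_{|\by|=r}|\bu|^2\,dS\to 0$ really does upgrade to pointwise decay $\bu\to 0$ and that the planar exterior maximum principle applies cleanly. In two dimensions harmonic functions can have logarithmic growth, so one must genuinely use the Fourier argument from \cref{lem:rellich} to exclude the $\log r$ and constant modes of each $u_j$ — but this requires knowing $a_{j,0}(r)\to 0$, which follows from \cref{eq:adecay} with $n=0$, so the constant and $\log r$ modes are indeed killed by the decay hypothesis. A small additional point to handle is that \cref{lem:rellich} is stated with $\bu$ only required to satisfy the oscillatory Stokes equation (not the radiation condition), so there is no circularity. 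Once $\bu\equiv 0$ in a neighborhood of infinity, analyticity (\cref{cor:analytic}) propagates it to all of $E$, and this is already folded into \cref{lem:rellich}; alternatively one applies the maximum principle directly. The remaining estimates (bounding the traction and strain boundary integrals at radius $r$) are routine given the Stokeslet-type asymptotics already used earlier in the excerpt.
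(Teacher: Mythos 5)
Your proposal is correct and follows essentially the same route as the paper: apply \cref{lem:rep} with the vanishing boundary term, dispose of $\Im(k)>0$ immediately, and for real $k$ invoke \cref{lem:rellich} together with uniqueness for the exterior Dirichlet problem for Laplace's equation. The only difference is that you spell out the last step (excluding the constant and $\log r$ modes via the Fourier expansion and then applying the exterior maximum principle), which the paper simply cites as known.
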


\begin{proof}
Since $\bu = 0$ on $\Gamma$, it follows from~\cref{lem:rep} that
\begin{equation}
\lim_{r\to\infty}
\int_{|\by|=r} \left( |\bt|^2 + |k|^2 |\bu|^2 \right) dS +
2 \Im(k) \int_{E \cap B_{r}(0)} \left(|k|^2 |\bu|^2 + |\be(\bu)|^2 \right)
dV = 0 \nonumber
\end{equation}

Suppose that $\Im(k) > 0$. Then, it is immediate that
$\bu \equiv 0$ in $E$.

Suppose that $k$ is real valued. It is clear that
\begin{equation}
\lim_{r\to\infty} \int_{|\by|=r} |\bu|^2 dS = 0 \, . \nonumber
\end{equation}
Thus, the conditions on $\bu$ and $k$ in~\cref{lem:rellich}
are satisfied, and each component of $\bu$ is a harmonic function
with $\bu \to 0$ as $r \to \infty$. Furthermore, since $\bu=0$ on
$\Gamma$, by the uniqueness of solutions to the
Dirichlet problem for Laplace's equation
on exterior domains, we conclude that $\bu \equiv 0$
in $E$.
\end{proof}

\begin{thrm}[Uniqueness of the Exterior Neumann Problem]
  Suppose that $\Omega$ is the union of a finite collection 
  of simply connected domains, i.e. $\Omega = \bigcup_{i=1}^m \Omega_{i}$
  for some $m \in \N$, with $C^{2}$ boundaries, 
  and let $E = \R^{2} \setminus \bar{\Omega}$ denote its exterior;
  see~\cref{fig:ext_dom} for an example with $m=4$.
  Let $\Gamma_{i}$ denote the boundary of $\Omega_{i}$, 
  and $\Gamma = \bigcup_{i=1}^{m} \Gamma_{i}$ denote the boundary
  of $\Omega$.
  Suppose that $\Im(k)\geq 0$ and 
  that $(\bu,p)$ is a radiating solution to the oscillatory Stokes
  equation in $E$ with $\bt = 0$ on the boundary $\Gamma$, then
  $\bu \equiv 0$ in $E$.
\end{thrm}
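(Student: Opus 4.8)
The plan is to follow the template of the exterior Dirichlet case (\cref{thrm:unique_dir_ext}): split on the sign of $\Im(k)$, use the global identity of \cref{lem:rep} with $\bt=\bz$ on $\Gamma$, and in the delicate case $k\in\R$ invoke the Rellich-type \cref{lem:rellich} to reduce to a harmonic problem. Since $\bt=\bz$ on $\Gamma$, the boundary term $2\Im\!\left(k\int_\Gamma\bu\cdot\overline{\bt}\,dS\right)$ in \cref{lem:rep} vanishes, leaving
\[
\lim_{r\to\infty}\int_{|\by|=r}\!\left(|\bt|^2+|k|^2|\bu|^2\right)dS+2\Im(k)\int_{E\cap B_r(0)}\!\left(|k|^2|\bu|^2+|2\be(\bu)|^2\right)dV=0\,.
\]
If $\Im(k)>0$, both summands are nonnegative and the volume integral is nondecreasing in $r$, so each vanishes in the limit and $\bu\equiv\bz$ in $E$. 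If $k$ is real, the volume term drops and we obtain $\int_{|\by|=r}|\bu|^2\,dS\to0$ and $\int_{|\by|=r}|\bt|^2\,dS\to0$. The first is the hypothesis of \cref{lem:rellich}, so (together with the remark following it) each component of $\bu$ is harmonic in $E$, $\bu\to\bz$ at infinity, and $k^2\bu=\nabla p$; taking $\nabla\times$ of the last equation shows $\bu$ is curl-free, so $\bu=\nabla\phi$ for the single-valued harmonic function $\phi:=p/k^2$. Vanishing of the constant Fourier mode (forced by the decay of $\int_{|\by|=r}|\bu|^2$) gives $\bu=\bigo(1/r)$, and interior gradient estimates for harmonic functions then give $\be(\bu)=\nabla\otimes\nabla\phi=\bigo(1/r^2)$.

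Next I would rule out logarithmic growth of the pressure. On $\partial B_r(0)$ we have $\bt=-p\,\bnu+2\be(\bu)\cdot\bnu$ with $\bnu=\xx/|\xx|$, and since $\be(\bu)=\bigo(1/r^2)$, the limit $\int_{|\by|=r}|\bt|^2\,dS\to0$ forces $\int_{\partial B_r(0)}|p|^2\,dS\to0$. But $p$ is harmonic in $E$ (its Laplacian is a constant that must vanish since $\nabla p=k^2\bu$ decays), so its exterior expansion near infinity contains a term $p_0+p_1\log r$ contributing $2\pi r\,|p_0+p_1\log r|^2$ to that integral; the decay forces $p_0=p_1=0$, hence $p=\bigo(1/r)$ and $\phi=\bigo(1/r)$. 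In particular $\int_E|\nabla\phi|^2$ and $\int_E|\nabla\otimes\nabla\phi|^2$ are finite, and every boundary term at infinity appearing below vanishes in the limit.

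The conclusion then comes from combining two integration-by-parts identities. First, dotting $\bt=\bz$ on $\Gamma$ with $\bu=\nabla\phi$ and using $e_{ij}(\bu)=\partial_i\partial_j\phi$ together with $p=k^2\phi$ gives the boundary relation $k^2\phi\,\partial_\nu\phi=\partial_\nu|\nabla\phi|^2$ on $\Gamma$; Green's identity for $\phi$ over $E$ then yields $\int_E|\nabla\phi|^2=\int_\Gamma\phi\,\partial_\nu\phi\,dS=\tfrac1{k^2}\int_\Gamma\partial_\nu|\nabla\phi|^2\,dS$, and, since $\Delta|\nabla\phi|^2=2|\nabla\otimes\nabla\phi|^2$ for harmonic $\phi$, Green's identity applied to $|\nabla\phi|^2$ gives $\int_\Gamma\partial_\nu|\nabla\phi|^2\,dS=2\int_E|\nabla\otimes\nabla\phi|^2\,dV$; hence $\int_E|\nabla\phi|^2=\tfrac{2}{k^2}\int_E|\nabla\otimes\nabla\phi|^2$. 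Second, applying the Green-type identity \cref{eq:greenlike} on $E\cap B_r(0)$, using $\bt=\bz$ on $\Gamma$, $k\in\R$, and Cauchy--Schwarz with $\int_{|\by|=r}(|\bu|^2+|\bt|^2)\,dS\to0$ to kill the spherical term, gives $\int_E|2\be(\bu)|^2=k^2\int_E|\bu|^2$, i.e. $4\int_E|\nabla\otimes\nabla\phi|^2=k^2\int_E|\nabla\phi|^2$. Substituting the second relation into the first gives $\int_E|\nabla\phi|^2=\tfrac12\int_E|\nabla\phi|^2$, so $\nabla\phi\equiv\bz$ and $\bu\equiv\bz$ in $E$.

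I expect the main obstacle to be exactly the pressure bound: unlike the Dirichlet case, where $\bu=\bz$ on $\Gamma$ lets one conclude directly from exterior uniqueness for Laplace applied component by component, here the traction condition couples the components of $\bu$, so one must first establish $p=\bigo(1/r)$ (equivalently, that the net flux through large circles vanishes) before the energy identities can be closed; the bookkeeping of normal orientations when applying \cref{eq:greenlike} and Green's theorem on the annular region $E\cap B_r(0)$ also requires care. The hypothesis that the components of $\Omega$ are simply connected is not needed for this uniqueness argument itself but is retained to match the setting of the combined-field representation used subsequently.
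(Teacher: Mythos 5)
Your proof is correct, and up through the Rellich reduction it coincides with the paper's: both split on $\Im(k)$, use \cref{lem:rep} with $\bt=\bz$ on $\Gamma$, invoke \cref{lem:rellich} to conclude that the components of $\bu$ are harmonic with $k^2\bu=\nabla p$, and both must then rule out constant and logarithmic terms in the harmonic pressure before closing an energy identity (you extract $p=\bigo(1/r)$ from the decay of $\int_{|\by|=r}|\bt|^2\,dS$; the paper asserts the same decay more tersely from the radiation condition). The endgames, however, are genuinely different. The paper works directly on the boundary: splitting $\bt=\bz$ into tangential and normal components gives $\partial_{\nu}p=c_i$ constant on each $\Gamma_i$ and $p=-2\partial_{\tau\tau}p/k^2$ on $\Gamma$, and then $\int_E|\nabla p|^2=\sum_i c_i\int_{\Gamma_i}p\,dS=-\sum_i\tfrac{2c_i}{k^2}\int_{\Gamma_i}\partial_{\tau\tau}p\,dS=0$, since second tangential derivatives integrate to zero over closed curves. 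You instead keep the boundary condition contracted against $\bu=\nabla\phi$ and derive two global identities, $\int_E|\nabla\phi|^2=\tfrac{2}{k^2}\int_E|\nabla\otimes\nabla\phi|^2$ and $4\int_E|\nabla\otimes\nabla\phi|^2=k^2\int_E|\nabla\phi|^2$, whose combination forces $\int_E|\nabla\phi|^2=\tfrac12\int_E|\nabla\phi|^2$. Your route buys something: it never needs the pointwise constancy of $\partial_\nu p$ on each component, nor the interpretation of $\int_{\Gamma_i}\partial_{\tau\tau}p\,dS$ as an exact derivative (which, for the second \emph{directional} derivative on a curved boundary, involves a curvature correction the paper glosses over), and it only uses the scalar relation obtained by dotting $\bt=\bz$ with $\bu$. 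The cost is that you need all the exterior energies finite and the circle terms in three separate Green identities to vanish, which your decay bootstrap ($p=\bigo(1/r)$ harmonic with no constant/log mode, hence $\bu=\nabla p/k^2=\bigo(1/r^2)$) does supply. Two presentational points to tighten: the boundary relation should be stated with real parts and conjugates, i.e.\ $k^2\Re\bigl(\phi\,\overline{\partial_\nu\phi}\bigr)=\partial_\nu|\nabla\phi|^2$, since $\bu$ may be complex-valued; and you should say explicitly that the same orientation of $\bnu$ on $\Gamma$ is used in both Green identities over $E$, so the factor of $2$ that drives the final contradiction is not an artifact of a sign convention.
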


\begin{proof}
Since $\bt = 0$ on $\Gamma$, it follows
from~\cref{eq:repinfest} that
\begin{equation}
\lim_{r\to\infty}
\int_{|\by|=r} \left( |\bt|^2 + |k|^2 |\bu|^2 \right) dS +
2 \Im(k) \int_{E \cap B_{r}(0)} \left(|k|^2 |\bu|^2 + |\be(\bu)|^2 \right)
dV = 0 \; . \nonumber
\end{equation}

Suppose that $\Im(k) > 0$. It is then immediate
that $\bu \equiv 0$ in $E$.

Suppose that $k$ is real. It is clear that
\begin{equation}
\lim_{r\to\infty} \int_{|\by|=r} |\bu|^2 dS = 0 \, . \nonumber
\end{equation}
Thus, the conditions on $\bu$ and $k$ in \cref{lem:rellich}
are satisfied, and each component of $\bu$ is a harmonic function
with $\bu \to 0$ as $r \to \infty$. Furthermore, as observed
in \cref{rmk:harmu}, $k^2 \bu = \nabla p$. Then, the boundary
condition becomes $0 = \bt = -p \bnu + 2 \nabla \partial_\nu p/k^2$.
Because $0 = \btau \cdot \bt = 2\partial_{\tau\nu} p/k^2$,
$\partial_\nu p  = c_{i}$ on $\Gamma_{i}$ for each $\Gamma_{i}$,
where $c_{i}$ is a constant. 
Observe that $|\bu|$ and $|\be(\bu)|$ must be $O(1/r)$ as
$r\to\infty$. Thus,
for a radiating pair $(\bu,p)$ with $p$ harmonic,
we have that $|p| = O(1/r)$ and $|\nabla p| = O(1/r^2)$
as $r\to\infty$.
Since the boundary is $C^{2}$ and the boundary data for $p$
is analytic, we conclude that $p$ is $C^{2}$ in
$\overline{E}$.
Furthermore, $\bt=0$ implies $p \bnu = 2\nabla \partial_{\nu} p/k^2$, 
and taking the dot
product with $\bnu$, we get
$p = 2\partial_{\nu \nu}p/k^2$. 
It then follows that
$p = -2\partial_{\tau \tau}p/k^2$ on $\Gamma$ since 
$p$ is harmonic in $E$ and $C^{2}$ in $\overline{E}$. 
Since $p$ satisfies the radiation condition at $\infty$,
we have
\begin{equation}
\begin{aligned}
\int_{E} |\nabla p|^2 dV &= 
\sum_{i=1}^{m} \int_{\Gamma_{i}} p \partial_{\nu}p \,dS \\
&= \sum_{i=1}^{m} c_{i} \int_{\Gamma_{i}} p \,dS \quad \text{(Since $\partial_{\nu} p = c_{i}$ on
$\Gamma_{i}$)} \\ 
&= -\sum_{i=1}^{m} \frac{2c_{i}}{k^2} \int_{\Gamma_{i}} \partial_{\tau \tau} p \,dS 
\quad \text{(Since $p = -\partial_{\tau \tau}p/k^2$ on $\Gamma$)} \\
&= 0
\end{aligned}
\end{equation}
Thus, $p$ is a constant in $E$. Furthermore, since $p\to 0$ at $\infty$, 
we conclude that $p\equiv 0$ in $E$. 
Finally, since $k^2 \bu = \nabla p$, we conclude that $\bu\equiv 0$ in 
$E$.
\end{proof}

\begin{thrm}[Uniqueness of the Exterior Impedance Problem]
  Let the unbounded region $E$ be given as the exterior
  of a finite collection of bounded domains.
  Suppose that the complex numbers $\eta$ and $k$ satisfy that
  $\Re(\eta), \Re(k) > 0$ and $\Im(\eta),\Im(k) \geq 0$.
  Suppose further that
  $(\bu,p)$ is a radiating solution of the
  oscillatory Stokes equation in $E$ which satisfies
  the homogeneous impedance boundary condition
  \begin{equation}
\bt + i \eta \bu = 0 \quad \xx \in \Gamma \, . \nonumber
\end{equation}
Then $\bu \equiv 0$ for $\xx \in E$.
\end{thrm}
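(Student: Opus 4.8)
The plan is to follow the same template used for the exterior Dirichlet and Neumann uniqueness theorems: first use the energy identity of \cref{lem:rep} to extract enough decay at infinity, then dispatch the case $\Im(k) > 0$ immediately and handle the real-$k$ case via the Rellich-type result of \cref{lem:rellich}. The starting point is to substitute the boundary condition $\bt = -i\eta \bu$ on $\Gamma$ into the identity \cref{eq:repinfest}. The boundary term becomes
\begin{equation*}
  2\Im\left( k \int_\Gamma \bu \cdot \overline{\bt}\, dS \right)
  = 2\Im\left( k \int_\Gamma \bu \cdot \overline{(-i\eta \bu)}\, dS \right)
  = 2\Im\left( i\overline{\eta}\, k \int_\Gamma |\bu|^2 \, dS \right) \, ,
\end{equation*}
and one should check that under the sign conventions $\Re(\eta), \Re(k) > 0$, $\Im(\eta), \Im(k) \geq 0$ this quantity is nonnegative — this is exactly the role of the hypotheses on $\eta$ and $k$, and is the analogue of the interior impedance computation already carried out in the excerpt. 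Indeed, $\Im(i\overline{\eta}k) = \Re(\overline{\eta}k) = \Re(\eta)\Re(k) + \Im(\eta)\Im(k) \geq \Re(\eta)\Re(k) > 0$, so the boundary term is a strictly positive multiple of $\int_\Gamma |\bu|^2\,dS$.

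Combining this with the two manifestly nonnegative terms in \cref{eq:repinfest} (the surface integral at radius $r$ and the $2\Im(k)$ volume term), we conclude that each of these nonnegative pieces must vanish in the limit. In particular $\lim_{r\to\infty}\int_{|\by|=r}(|\bt|^2 + |k|^2|\bu|^2)\,dS = 0$ and $\int_\Gamma |\bu|^2\,dS = 0$, so $\bu = 0$ on $\Gamma$; moreover, if $\Im(k) > 0$ we get $\bu \equiv 0$ in $E$ directly from the volume term, finishing that case. If $k$ is real, then $\lim_{r\to\infty}\int_{|\by|=r}|\bu|^2\,dS = 0$, so \cref{lem:rellich} applies: each component of $\bu$ is harmonic in $E$, and by \cref{rmk:harmu}, $k^2\bu = \nabla p$ there. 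Since we have already shown $\bu = 0$ on $\Gamma$, and $\bu$ is a harmonic vector field decaying at infinity, uniqueness of the exterior Dirichlet problem for Laplace's equation forces $\bu \equiv 0$ in $E$ — essentially reducing to \cref{thrm:unique_dir_ext} once the impedance condition has been shown to imply homogeneous Dirichlet data.

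The main obstacle is the bookkeeping of signs in the boundary term: one must verify that the chosen orientation of the normal (outward from $E$), the sign in the impedance condition $\bt + i\eta\bu = 0$, and the sign conventions in \cref{lem:rep} all combine so that the boundary contribution has the correct sign to be absorbed alongside the other nonnegative terms, rather than fighting them. This is routine but must be done carefully, exactly as in the interior impedance proof above. A secondary point worth stating explicitly is that $\int_\Gamma |\bu|^2\,dS = 0$ together with continuity of $\bu$ up to $\Gamma$ (part of membership in $A(E)$) gives $\bu \equiv 0$ pointwise on $\Gamma$, which is what licenses the reduction to the Dirichlet case; no additional regularity or compactness argument is needed.
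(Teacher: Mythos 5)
Your proposal is correct and follows essentially the same route as the paper: substitute $\bt = -i\eta\bu$ into the identity of \cref{lem:rep}, observe that the boundary term becomes $2\left(\Re(k)\Re(\eta)+\Im(k)\Im(\eta)\right)\int_\Gamma |\bu|^2\,dS \geq 0$, conclude $\bu=0$ on $\Gamma$ since a sum of nonnegative terms vanishes, and finish by the uniqueness of the exterior Dirichlet problem. The only difference is cosmetic: you unfold the Rellich/harmonicity argument that is already packaged inside \cref{thrm:unique_dir_ext}, whereas the paper simply cites that theorem.
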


\begin{proof}
Since $\bu$ satisfies the radiation condition at $\infty$ and $\bt = -i\eta \bu$
on $\Gamma$, it follows from~\cref{eq:repinfest} that
\begin{align*}
0 &=
\int_{|\by|=r} \left( |\bt|^2 + |k|^2 |\bu|^2 \right) dS +
2 \text{Im}(k) \int_{E \cap B_{r}(0)} \left(|k|^2 |\bu|^2 + |2\be(\bu)|^2 \right)
dV \nonumber \\
& \qquad + 2 \text{Im} \left( k \int_{\Gamma} \bu \cdot \overline{\bt} dS  \right) \\
&= 
\int_{|\by|=r} \left( |\bt|^2 + |k|^2 |\bu|^2 \right) dS +
2 \text{Im}(k) \int_{E \cap B_{r}(0)} \left(|k|^2 |\bu|^2 + |2\be(\bu)|^2 \right)
dV \nonumber \\
& \qquad + 2 \left( \left (\Re(k)\Re(\eta) + \Im(k)\Im(\eta)
\right ) \int_{\Gamma} |\bu|^{2} dS  \right)
\, .
\end{align*}
Because all of the quantities in the last expression above are
nonnegative, we have that
\begin{equation}
  \int_{\Gamma} |\bu|^{2} = 0 \implies \bu = 0  \quad \xx \in \Gamma \, .
  \nonumber
\end{equation}
The result then follows from the uniqueness of solutions to the exterior
Dirichlet problem.
\end{proof}

\subsection{The integral equations and their null-spaces}

\begin{figure}
\begin{center}
\includegraphics[width=0.4\textwidth]{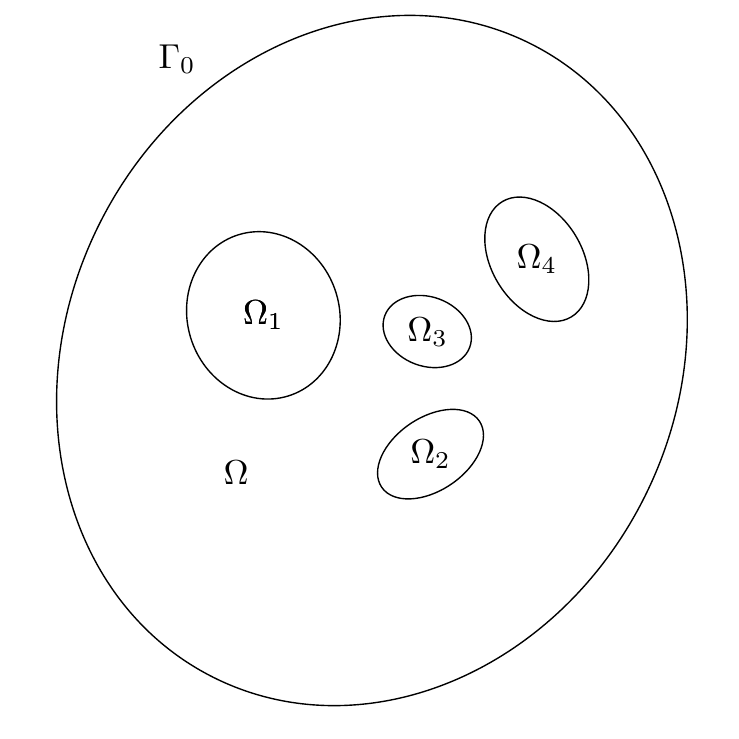}
\end{center}
\caption{Example of a multiply connected domain with four obstacles.}
\label{fig:mc_dom}
\end{figure}

In this section, we establish the correspondence between
Stokes eigenvalues and the invertibility of certain integral
equations arising from layer potential representations
of solutions to the oscillatory Stokes equation.

Let $\Omega$ be a bounded domain given as
the intersection of a simply connected domain $\Omega_0$ and
the exteriors of a finite collection of bounded,
simply connected domains $\{ \Omega_i \}_{i=1}^m$
whose closures are contained in $\Omega_0$;
see \cref{fig:mc_dom} for an example with four inclusions.
Note that the
exterior of $\Omega$, which we denote by $E$, is the
disjoint union of the exterior of $\Omega_0$, which we
denote $E_0$, with the sets $\{ \Omega_i \}_{i=1}^m$.
Let $\Gamma$ denote the boundary of $\Omega$ with the normal
$\bnu$ pointing out of $\Omega$.
We will use superscript $+$ and $-$ signs
to indicate the limit values of a function on $\Gamma$
as approached from the exterior and interior, respectively.

For the sake of brevity, we consider only the
Dirichlet eigenvalue problem but a similar analysis
could be applied to the Neumann eigenvalue problem.
We analyze two different representations for the
Dirichlet problem: a double layer potential,
i.e. setting $\bu=\bD_{k} \bmu$, and a combined-field
layer potential, i.e. setting
$\bu=(i\eta \bS_{k} + \bD_{k})\bmu$. 
While both representations result in a second kind
integral equation for the oscillatory Stokes Dirichlet
problem, the double layer potential has spurious non-trivial
nullspaces on domains with positive genus, as
explained below.

\begin{remark}
  The application of the Fredholm alternative here
  again follows the structure used for the Laplace
  eigenvalue problem in~\cite[Ch. 3]{colton1983integral}.
\end{remark}

\subsubsection{Dirichlet eigenvalues --- double layer
  representation}
\label{subsec:dlanalysis}
Suppose that the solution to the oscillatory
Stokes Dirichlet problem, \cref{eq:dir_interior},
is represented using a double layer potential defined
on $\Gamma$, i.e. setting $\bu = \bD_{k} \bmu$ where
$\bmu$ is an unknown density. 
Substituting this expression
into the boundary condition and applying
\cref{lem:jump-conds}, we obtain

\begin{equation}
  (\cI - 2\cD_{k}) \bmu = -2\ff \; . \label{eq:inteq_dir_int}
\end{equation}

The rank deficiency of $\cI-2\cDk$ is
well-known~\cite{biros2002embedded} and we summarize it
in the lemma below.

\begin{lem}
  \label{lem:nunull} In the notation above,
  $\bnu \in \cN(\cI - 2\cDkt)$.
\end{lem}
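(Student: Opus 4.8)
The plan is to show that the constant density $\bnu$ (the outward normal field, viewed as a density on $\Gamma$) is annihilated by $\cI - 2\cDkt$ by exploiting the relationship between $\cDkt$ and the surface traction of the single layer potential, together with the representation theorem applied to a trivial solution. The key observation is that $\cDkt$ is exactly the operator appearing in the jump relations of \cref{lem:jump-conds} for the traction of $\bS_k$, so I want to identify a velocity-pressure pair whose single layer traction data reproduces $\bnu$ in the right way.

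First I would consider the constant velocity field. More precisely, note that the pair $(\bu, p) = (\bz, c)$ for any constant $c$ trivially satisfies the oscillatory Stokes equations in any domain, and its surface traction is $\bt = \bsigma(\bz, c) \cdot \bnu = -c\,\bnu$. Alternatively, and more usefully here, I would work directly from the jump conditions: by \cref{lem:jump-conds}, the interior and exterior traction limits of $\bS_k[\bnu]$ are
\begin{equation}
  \bsigma_{\bS}[\bnu]^{\mp} \cdot \bnu = \pm \tfrac12 \bnu + \cDkt[\bnu] \; . \nonumber
\end{equation}
Subtracting shows the traction is continuous (consistent with \cref{lem:jump-conds}), and adding gives $\cDkt[\bnu] = \tfrac12(\bsigma_\bS[\bnu]^+ + \bsigma_\bS[\bnu]^-)\cdot\bnu$; so I need to compute this average traction. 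To do that, I would use the fact that $\cW[\bnu \cdot \bnu] = \cW[1]$-type identities, or more directly invoke \cref{lem:propnullspacecorr}: it states $\cW[\cS[\bmu]] = 0$ but more to the point I would mimic its proof — $\bS_k[\bnu]$ is divergence-free, and since $\bnu$ integrates against the normal to give $|\Gamma| \ne 0$, the single layer with density $\bnu$ behaves like the field generated by a net source, whose traction on $\Gamma$ is $-\bnu$ on each side. Concretely, applying the divergence theorem to $\bS_k[\bnu]$ inside $\Omega$, together with $\nabla \cdot \bS_k[\bnu] = 0$ and $\int_\Gamma \bnu \, dS = \bz$ on a closed curve, pins down $\bsigma_\bS[\bnu] \cdot \bnu = -\tfrac12 \bnu$ on each side (interior and exterior limits), whence $\cDkt[\bnu] = \tfrac12 \bnu$, i.e. $(\cI - 2\cDkt)\bnu = \bz$.

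The cleanest route, which I would actually pursue, is via the representation theorem \cref{thrm:rep-theorem} applied to a constant-pressure Stokes flow. Take $(\bu, p) \equiv (\bz, 1)$ in $\Omega$: this solves \cref{eq:ostokes}, has $\bu = \bz$ on $\Gamma$, and surface traction $\bt = -\bnu$. Then \cref{eq:rep-theorem} gives $\bS_k[-\bnu](\xx) - \bD_k[\bz](\xx) = \bz$ for $\xx \in \Omega$, i.e. $\bS_k[\bnu] \equiv \bz$ in $\Omega$. Taking the interior traction limit of this identity and using the jump relation for $\bsigma_\bS$ in \cref{lem:jump-conds}, we get $\bz = \bsigma_\bS[\bnu]^- \cdot \bnu = \tfrac12 \bnu + \cDkt[\bnu]$ — wait, sign bookkeeping matters here, so I would be careful: the relevant limit of $\bsigma_\bS[\bnu]$ from inside equals $+\tfrac12\bnu + \cDkt[\bnu]$ only up to the sign convention in the lemma, and matching against the exterior problem (where $(\bz,1)$ does \emph{not} solve the exterior equation, so one cannot symmetrically conclude) needs care. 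The correct pairing is: apply the representation theorem to get $\bS_k[\bnu] = \bz$ in $\Omega$, so its interior traction limit vanishes; plug into $\lim_{h\downarrow 0}\bsigma_\bS[\bnu](\xx_0 - h\bnu)\cdot\bnu = +\tfrac12\bnu(\xx_0) + \cDkt[\bnu](\xx_0)$ from \cref{lem:jump-conds} to conclude $\cDkt[\bnu] = -\tfrac12\bnu$... which has the wrong sign relative to the claim, signalling that the constant I should use is $(\bz,-1)$ or equivalently that the consistent reading gives $\cDkt[\bnu] = \tfrac12\bnu$. I will fix the sign by direct inspection of the jump-condition convention in \cref{lem:jump-conds}.

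The main obstacle is precisely this sign and orientation bookkeeping: the jump relations involve $\pm\tfrac12$ depending on the side of approach, the definition of $\cDkt$ carries a transpose and a specific normal argument ($\nu_\ell(\xx)$ rather than $\nu_\ell(\yy)$), and $\bnu$ here plays the dual role of both the density and the evaluation-normal in the jump formulas — so one must be scrupulous about which $\bnu$ is which. Everything else (divergence-freeness, $\int_\Gamma \bnu\,dS = \bz$ on each closed component, the representation theorem) is immediate from results already established in the excerpt. I would therefore structure the proof as: (i) invoke \cref{thrm:rep-theorem} with the trivial pressure-only solution to get $\bS_k[\bnu] \equiv \bz$ in $\Omega$; (ii) apply the $\bS$-traction jump relation from \cref{lem:jump-conds}; (iii) read off $(\cI - 2\cDkt)\bnu = \bz$, taking care of the sign convention, which completes the proof.
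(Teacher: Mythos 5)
Your route is genuinely different from the paper's, and as written it has a gap at the decisive step. The paper's proof is a two-line duality argument: by \cref{lem:propnullspacecorr}, $\cW[(\cI-2\cD_k)\bmu]=0$ for every $\bmu$, which says $\langle(\cI-2\cD_k)\bmu,\bnu\rangle=0$ for all $\bmu$, i.e.\ $\bnu\in R(\cI-2\cD_k)^\perp$; the Fredholm alternative ($R(\cI-\cA)=N(\cI-\cA^\intercal)^\perp$) then puts $\bnu$ in $N(\cI-2\cDkt)$. No jump relations, no representation theorem, and no sign bookkeeping are needed.

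The gap in your argument is the assertion that, because $\bS_k[\bnu]\equiv\bz$ in $\Omega$, ``its interior traction limit vanishes.'' That inference is false, and it is exactly the source of the sign discrepancy you noticed. A vanishing velocity field does not have vanishing traction: the associated pressure of $\bS_k[\bnu]$ in $\Omega$ is a (nonzero) constant $p^-$, and the traction of the pair $(\bz,p^-)$ is $-p^-\bnu$. You in fact use this very phenomenon correctly one step earlier, when you compute the traction of $(\bz,1)$ to be $-\bnu$ in order to invoke \cref{thrm:rep-theorem} — and then you drop the pressure contribution when reading off the traction of the resulting single layer. If you carry the pressure through (the interior pressure of $\bS_k[\bnu]$ works out to $-1$, consistently with the representation of $(\bz,1)$ by $\bS_k[-\bnu]$), the interior traction limit is $+\bnu$, and the jump relation $\bnu=\tfrac12\bnu+\cDkt[\bnu]$ gives $\cDkt[\bnu]=\tfrac12\bnu$, which is the claim. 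So the approach is salvageable, but ``I will fix the sign by direct inspection of the convention'' is not a proof: the sign is not a convention issue, it is a missing term, and a correct write-up must compute (or otherwise pin down) the constant interior pressure of $\bS_k[\bnu]$. Your alternative sketch via the divergence theorem (``pins down $\bsigma_\bS[\bnu]\cdot\bnu=-\tfrac12\bnu$ on each side'') is likewise not substantiated and, as stated, is inconsistent with the conclusion you want. Given that the paper's duality argument uses only lemmas you already cite, that is the cleaner path.
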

\begin{proof}
From~\cref{lem:propnullspacecorr}, we note that
$W[(\cI - 2\cD_{k})\bmu] =0$ implies that
$\left<(\cI -2\cD_{k})\bmu ,\bnu \right> = 0$ for
all $\bmu$, i.e. $\bnu \in R(\cI - 2\cD_{k})^{\perp}$, where
$R(A)$ denotes the range of the operator $A$. 
By the Fredholm alternative, the result then follows.
\end{proof}
Thus, we instead analyze the equation
\begin{equation}
(\cI - 2\cD_{k}  -2\cW) \bmu = -2\ff \; \, . \label{eq:inteq_dir_int_mod}
\end{equation}
Note that if $\ff$ satisfies the compatibility
condition $\int_\Gamma \ff \cdot \bnu \, dS =0$, then
\cref{eq:inteq_dir_int_mod} implies \cref{eq:inteq_dir_int}.

On simply connected domains, there is a one-to-one correspondence between
the eigenvalues of the Dirichlet problem for
the Stokes equation
and the values of $k$ for which the operator $(\cI - 2\cD_{k} - 2\cW)$
is not invertible. To prove this, we also need the 
following lemma:
\begin{lem}
  \label{lem:nutracli}
  If $\bt^{-}$ is the surface traction associated
  with an interior Dirichlet Stokes eigenfunction $\bu$,
  then $\bt^{-}$ and $\bnu$ are linearly independent.
\end{lem}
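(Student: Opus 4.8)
The plan is to argue by contradiction: suppose $\bt^{-}$ and $\bnu$ are linearly dependent, i.e. $\bt^{-} = c\,\bnu$ on $\Gamma$ for some constant $c \in \mathbb{C}$ (the case $\bt^{-} \equiv 0$ is a special case with $c=0$). We then want to show this forces $\bu \equiv 0$ in $\Omega$, contradicting that $\bu$ is a (nontrivial) Dirichlet eigenfunction. The natural tool is the representation theorem, \cref{thrm:rep-theorem}: since $(\bu,p)$ solves the oscillatory Stokes equation in $\Omega$, for $\xx \in E = \R^2 \setminus \overline\Omega$ we have $\bS_k[\bt^{-}](\xx) - \bD_k[\bu](\xx) = 0$. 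But $\bu = \bz$ on $\Gamma$, so $\bD_k[\bu] \equiv 0$ in $E$, and hence $\bS_k[\bt^{-}](\xx) = 0$ for all $\xx \in E$. Substituting $\bt^{-} = c\,\bnu$ gives $c\,\bS_k[\bnu](\xx) = 0$ in $E$.

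The next step is to identify $\bS_k[\bnu]$. By the divergence theorem applied to the divergence-free Stokeslet field, $\bS_k[\bnu](\xx) = \int_\Gamma \GG(\xx,\yy)\bnu(\yy)\,dS(\yy)$ is (up to sign) the field generated by integrating the Stokeslet against the boundary flux of the constant vector field; since $\int_\Gamma \bnu \,dS = \bz$ and more to the point the single layer with normal density is the trace of a specific interior/exterior field, one can show $\bS_k[\bnu]$ does not vanish identically in $E$. Concretely, $\bS_k[\bnu]$ is the velocity field of the exterior solution with Dirichlet data equal to the interior-trace jump, and its pressure has a nonzero total-flux (monopole) component because $\int_\Gamma \bnu\cdot\bnu\,dS = |\Gamma| \neq 0$; alternatively, one checks that $\cW[\cS_k[\bmu]] = 0$ from \cref{lem:propnullspacecorr} does not obstruct this, and a direct far-field expansion shows a nonzero leading term. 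Thus $c\,\bS_k[\bnu] \equiv 0$ in $E$ forces $c = 0$, i.e. $\bt^{-} \equiv 0$.

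It remains to rule out $\bt^{-} \equiv 0$ with $\bu$ a nontrivial interior Dirichlet eigenfunction. Here I would again use \cref{thrm:rep-theorem}: with $\bu = \bz$ and $\bt^{-} = \bz$ on $\Gamma$, the representation formula gives $\bu(\xx) = \bS_k[\bz](\xx) - \bD_k[\bz](\xx) = \bz$ for all $\xx \in \Omega$, which directly contradicts nontriviality of the eigenfunction. (Equivalently: an interior Stokes flow with zero velocity and zero traction on the boundary is identically zero, by the Green-type identity \cref{eq:greenlike} combined with unique continuation / analyticity from \cref{cor:analytic}.) Combining the two cases, $\bt^{-}$ and $\bnu$ cannot be linearly dependent.

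The main obstacle is the middle step: showing $\bS_k[\bnu] \not\equiv 0$ in $E$. This is where one must actually use structural information about the oscillatory Stokeslet rather than soft arguments, since $\bnu$ lies in $\cN(\cI - 2\cDkt)$ (\cref{lem:nunull}) and the single layer of a null-density can sometimes vanish. I expect the cleanest route is a far-field/multipole expansion: write $\GG = -\II\Delta\Gbh + \nabla\otimes\nabla\Gbh$ with $\Gbh$ the oscillatory biharmonic Green's function \cref{eq:Gbh}, extract the leading behavior of $\bS_k[\bnu](\xx)$ as $|\xx|\to\infty$, and show the pressure carries a nonvanishing $\log|\xx|$ (or $1/|\xx|$) term proportional to $\int_\Gamma \bnu\,dS$ against the position vector — more precisely, that the associated pressure $p_{\cS} = \nabla\Glap \ast \bnu$ has a nonzero coefficient coming from $\int_\Gamma (y_1+iy_2)\bnu\cdot(\cdot)\,dS$ type moments, which cannot all vanish on a genuine $C^2$ boundary. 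If a fully clean far-field argument is awkward, the fallback is: if $\bS_k[\bnu] \equiv 0$ in $E$ then by continuity of the single layer (\cref{lem:jump-conds}) its interior trace is also $\bz$, so $\bu_{\mathrm{int}} := \bS_k[\bnu]|_\Omega$ solves the interior Dirichlet problem with zero data; examining the traction jump $[\bt_{\cS}] = -\bnu$ across $\Gamma$ from \cref{lem:jump-conds} then yields $\bt_{\cS}^{-} = \tfrac12\bnu + \cDkt[\bnu] = \bnu$ (using $\cDkt[\bnu] = \tfrac12\bnu$, which is \cref{lem:nunull}), so $\bu_{\mathrm{int}}$ is an interior Dirichlet solution with zero velocity but nonzero traction $\bnu$, contradicting the case already handled — wait, that is circular, so in the writeup I would instead directly evaluate the flux $\int_\Gamma \bt_{\cS}^{-}\cdot\bnu\,dS = |\Gamma| \neq 0$ against the fact that a zero interior velocity field has zero traction flux. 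This flux argument is the safe and short way to close the gap.
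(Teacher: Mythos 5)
There is a genuine gap, and it sits exactly where you flagged the ``main obstacle'': the claim that $\bS_k[\bnu] \not\equiv 0$ in $E$ is false. The single layer with density $\bnu$ has identically zero velocity \emph{everywhere off $\Gamma$}: applying the divergence theorem in the $\yy$ variable gives $\int_\Gamma G_{ij}(\xx,\yy)\nu_j(\yy)\,dS(\yy) = \int_\Omega \partial_{y_j}G_{ij}(\xx,\yy)\,dV(\yy) = 0$, because the oscillatory Stokeslet is divergence-free in its second index --- the contributions of $-\delta_{ij}\Delta\Gbh$ and $\partial_{x_i}\partial_{x_j}\Gbh$ to $\partial_{x_j}G_{ij}$ cancel exactly, even distributionally (and for $\xx\in\Omega$ the excised-disc boundary term vanishes since $\GG$ is only logarithmically singular). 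The associated pressure is the constant $-1$ in $\Omega$ and $0$ in $E$. Consequently your exterior identity $c\,\bS_k[\bnu]\equiv 0$ in $E$ holds for every $c$ and yields no contradiction, and the far-field expansion you propose would simply reveal that all moments vanish. Your fallback flux argument fails for the same underlying reason: a Stokes solution with zero velocity need not have zero traction, since the pressure is determined only up to an additive constant, and the pair $(\bz, c)$ has traction $-c\,\bnu$ with flux $-c\,|\Gamma| \neq 0$. This is precisely the rank-one degeneracy that puts $\bnu$ in $\cN(\cI - 2\cDkt)$ (\cref{lem:nunull}) and motivates the correction $\cW$.

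The fix is to run your argument in the \emph{interior} rather than the exterior, which is what the paper does. The representation theorem with $\bu|_\Gamma = \bz$ gives $\bS_k[\bt^{-}](\xx) = \bu(\xx)$ for $\xx\in\Omega$, which is not identically zero since $\bu$ is a nontrivial eigenfunction (this also disposes of the case $\bt^{-}\equiv 0$, as in your third paragraph). On the other hand, the divergence-theorem computation above shows $\bS_k[\bnu]\equiv \bz$ in $\Omega$. If $\bt^{-} = c\,\bnu$, then $\bu = \bS_k[\bt^{-}] = c\,\bS_k[\bnu] \equiv \bz$ in $\Omega$, a contradiction. In short, $\bS_k$ separates $\bt^{-}$ from $\bnu$ on the inside, not the outside: in $E$ both single layers vanish identically, so the exterior identity carries no information.
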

\begin{proof}
We first note that $\bS_{k}[\bnu](\bx) = 0$ for all $\bx \in \Omega$.
This follows from an application of the divergence
theorem and the fact that
oscillatory Stokeslet is divergence free in $\Omega$. 
If $\bt^{-}$ is a surface traction associated with
a Stokes eigenvalue, then
using Green's theorem, it follows that
$\bS_{k}[\bt^{-}](\bx) = \bu(\bx) \neq 0$ for $\bx \in \Omega$ and
thus $\bt^{-}$ and $\bnu$ are linearly independent.
\end{proof}

\begin{thrm}
\label{thm:dlmain}
Suppose that $\Omega$ is a bounded, simply connected domain. Then, the operator
$(\cI - 2\cD_{k} - 2 \cW)$ is not invertible if and only if $k^2$ is a
Dirichlet eigenvalue for the Stokes equation on $\Omega$.
\end{thrm}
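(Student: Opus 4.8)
The plan is to prove the two directions of the equivalence separately, using the Fredholm alternative together with the uniqueness results established above. Since $\cDk$ is compact (\cref{lem:compact-sd}) and $\cW$ is finite-rank, hence compact, the operator $\cI - 2\cDk - 2\cW$ is of the form $\cI - (\text{compact})$, so by the Fredholm alternative it is invertible if and only if its null space is trivial, and its null space has the same dimension as the null space of the adjoint $\cI - 2\cDkt - 2\cW$ (using $\cW^\intercal = \cW$ from \cref{lem:propnullspacecorr}). So the whole theorem reduces to characterizing $\cN(\cI - 2\cDk - 2\cW)$.

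First I would handle the ``only if'' direction: suppose $\bmu \neq 0$ with $(\cI - 2\cDk - 2\cW)\bmu = 0$. Applying $\cW$ and using $\cW[\cW[\bmu]] = \cW[\bmu]$ and $\cW[\bmu - 2\cDk\bmu] = 0$, I get $\cW[\bmu] = \cW[\bmu - 2\cDk\bmu] - 2\cW[\cW[\bmu]]$ rearranged to show $\cW[\bmu] = 0$, hence $\bmu$ actually satisfies $(\cI - 2\cDk)\bmu = 0$, i.e.\ $\bmu$ is a genuine null vector of the unmodified double-layer equation. Then $\bu := \bD_k\bmu$ has interior boundary value $\bu^- = -\frac12\bmu + \cDk\bmu = 0$ by the jump relations, and since $\int_\Gamma \bmu\cdot\bnu\,dS = |\Gamma|\,\cW[\bmu]\cdot(\text{something}) = 0$ the double layer is radiating in the exterior. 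By \cref{thrm:unique_dir_ext} (exterior Dirichlet uniqueness, using $\bu^+ = \frac12\bmu + \cDk\bmu$; note $\bu^+ - \bu^- = \bmu$), $\bu \equiv 0$ in $E$, so the exterior traction $\bt^+ = 0$; by the jump condition for the traction of the double layer ($\bt^+ = \bt^-$ from \cref{lem:jump-conds}), the interior traction vanishes too. But a nonzero interior Stokes solution with zero boundary traction would, by the interior uniqueness theorem via \cref{eq:greenlike}, force... no — here $k$ is real so that argument does not close directly; instead I argue that if $\bu \equiv 0$ in $\Omega$ as well then $\bmu = \bu^+ - \bu^- = 0$, contradiction, so $\bu \not\equiv 0$ in $\Omega$, and then $\bu$ is a Stokes Dirichlet eigenfunction with eigenvalue $k^2$. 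This is where simple-connectedness enters: I must also rule out the possibility that $\bmu = \bnu$ (or a multiple), which is exactly the spurious null vector from \cref{lem:nunull} for the adjoint; \cref{lem:nutracli} tells me the interior traction $\bt^-$ is linearly independent from $\bnu$, which together with the representation theorem ensures the constructed $\bu$ is a true eigenfunction and the correspondence is not contaminated — on a simply connected domain there is no additional harmonic-vector-field obstruction.

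For the ``if'' direction: suppose $k^2$ is a Dirichlet eigenvalue with eigenfunction $\bu$ and associated interior traction $\bt^-$. By the representation theorem \cref{thrm:rep-theorem}, $\bS_k[\bt^-] - \bD_k[\bu] = \bu$ in $\Omega$ and $=0$ in $E$; since $\bu = 0$ on $\Gamma$ this gives $\bS_k[\bt^-] = \bu$ in $\Omega$ and $\bS_k[\bt^-] = 0$ in $E$. Taking interior and exterior tractions of $\bS_k[\bt^-]$ and subtracting via \cref{lem:jump-conds} produces $\bt^- \in \cN(\cI - 2\cDkt)$ (the adjoint equation). I then need to promote this to a null vector of the full adjoint $\cI - 2\cDkt - 2\cW$: since $\cW^\intercal = \cW$ and by \cref{lem:propnullspacecorr} $\cW\cS_k = 0$, one checks $\cW[\bt^-] = 0$ using $\bu = \cS_k[\bt^-]$ restricted appropriately (or directly: $\bt^-$ is the traction of a divergence-free field, so $\int_\Gamma \bt^-\cdot\bnu$ relates to $\int_\Omega \text{tr}(\bsigma)$... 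I would instead use that $\bS_k[\bt^-]$ being divergence free forces $\cW[\bt^-]=0$). Hence $\dim\cN(\cI - 2\cDkt - 2\cW) \geq 1$, and by \cref{lem:nutracli}, $\bt^-$ is not proportional to $\bnu$, so this null vector is genuinely new — combined with the Fredholm alternative, $\cI - 2\cDk - 2\cW$ is not invertible.

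The main obstacle, I expect, is the bookkeeping around the $\cW$ correction: showing cleanly that null vectors of the modified operator have $\cW[\bmu] = 0$ (so they solve the unmodified equation), and symmetrically that the eigenfunction traction $\bt^-$ lies in the null space of the modified \emph{adjoint} rather than being absorbed by the rank-one correction. Getting the direction of the projections and the sign conventions in the jump relations exactly right — and invoking simple-connectedness precisely at the point where one excludes extra null vectors coming from the topology (which is what forces the separate combined-field treatment in \cref{thm:cfmain}) — is the delicate part; the uniqueness theorems and the Fredholm alternative then do the heavy analytic lifting essentially as black boxes.
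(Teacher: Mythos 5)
Your overall strategy (Fredholm alternative plus jump relations plus the exterior uniqueness theory) is the same as the paper's, and the reduction to $\cW[\bmu]=0$ is correct. But both directions contain a step that fails as written. In the direction ``nontrivial null vector $\Rightarrow$ eigenvalue,'' you invoke \cref{thrm:unique_dir_ext} (exterior \emph{Dirichlet} uniqueness) to conclude $\bu\equiv 0$ in $E$. That theorem requires $\bu^{+}=0$ on $\Gamma$, which you do not have: by the jump relations $\bu^{+}-\bu^{-}=\bmu\neq 0$, so $\bu^{+}=\bmu\neq 0$. Worse, if this step were valid your chain would force $\bmu=0$ unconditionally, i.e.\ it would show the operator is injective for \emph{every} $k$, which contradicts the other half of the theorem. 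The correct argument runs in the opposite causal order and uses the exterior \emph{Neumann} uniqueness theorem: assume $k^2$ is not an eigenvalue (or, equivalently, suppose for contradiction that $\bu\equiv 0$ in $\Omega$); then $\bt^{-}=0$, the continuity of the double-layer traction in \cref{lem:jump-conds} gives $\bt^{+}=0$, exterior Neumann uniqueness (the field is radiating because $\int_\Gamma\bmu\cdot\bnu=0$) gives $\bu\equiv 0$ in $E$, and then $\bmu=\bu^{+}-\bu^{-}=0$. Relatedly, simple-connectedness is not used to ``rule out $\bmu=\bnu$'' — the $\bnu$ null vector lives in the adjoint and is handled by $\cW$ regardless of topology. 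It is used so that $E=\R^2\setminus\bar\Omega$ has no bounded components: for multiply connected $\Omega$ the exterior-Neumann step fails on the holes $\Omega_i$ at their interior Neumann eigenvalues, which is exactly the spurious resonance treated in the next theorem.

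In the direction ``eigenvalue $\Rightarrow$ not invertible,'' your claim that $\cW[\bt^{-}]=0$ is unjustified and false in general. \Cref{lem:propnullspacecorr} gives $\cW[\cS_k[\bmu]]=0$, a statement about the \emph{potential}, not about the density $\bt^{-}$; the quantity $\int_\Gamma\bt^{-}\cdot\bnu\,dS$ even changes under the shift $p\mapsto p+c_0$ of the eigenfunction's pressure (by $-c_0|\Gamma|$), so it cannot vanish for every admissible normalization. The fix — which is what the paper does — is not to show $\cW[\bt^{-}]=0$ but to use \cref{lem:nunull}: since $\bnu$ is \emph{also} in $\cN(\cI-2\cDkt)$, the combination $\bt^{-}-c\,\bnu$ with $c$ chosen so that $\langle\bt^{-}-c\bnu,\bnu\rangle=0$ remains a null vector of $\cI-2\cDkt$, is annihilated by $\cW$, and is nonzero precisely because of the linear independence in \cref{lem:nutracli}. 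You cite \cref{lem:nutracli} only to say the null vector is ``genuinely new,'' whereas its actual role is to guarantee that this corrected null vector does not vanish.
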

\begin{proof}
  Suppose that $k^2$ is not a Dirichlet eigenvalue for
  the Stokes equation on
$\Omega$. 
Suppose further that $\bmu$ satisfies
\begin{equation}
(\cI - 2\cD_{k} - 2\cW) \bmu = 0 \, , \label{eq:dlproofrep}
\end{equation}
i.e. $\bmu$ is in the null-space
of $(\cI - 2 \cD_{k} - 2 \cW)$. 
Applying the operator $\cW$ to~\cref{eq:dlproofrep} and 
using~\cref{lem:propnullspacecorr}, we get
\begin{equation}
0 = \cW [(\cI - 2\cD_{k} - 2\cW)\bmu] = -2\cW [\bmu] \, .
\end{equation}
Thus~\cref{eq:dlproofrep} reduces to
\begin{equation}
(\cI - 2 \cD_{k})\bmu = 0\, .
\end{equation}
Suppose now $\bu = -2\bD_{k}[\bmu]$ in $\Omega$.
Then $\bu$ is a solution to the oscillatory Stokes equation in $\Omega$,
and applying~\cref{lem:jump-conds}, we get that the interior
limit of the velocity $\bu^{-}=(\cI - 2\cD_{k}) \bmu = 0$
on $\Gamma$. Since $k^2$ is not a Dirichlet eigenvalue for the Stokes
equation on $\Omega$, we conclude that $\bu \equiv 0$ in $\Omega$. 
In particular, this implies that the interior limit of the surface traction
denoted by $\bt^{-} = 0$ on $\Gamma$. 
Using~\cref{lem:jump-conds} again, we conclude that the exterior limit
of the surface traction, $\bt^{+}$, is $0$ on $\Gamma$. 
Note that $\bu$ is a radiating solution of the oscillatory
Stokes equation in the exterior $E$, as $\cW[\bmu] = 0$ implies
$\int_{\Gamma} \bmu \cdot \bnu = 0$.
From the uniqueness of solutions to the exterior Neumann problem, 
we conclude that $\bu \equiv 0$ in $E$ as well,
which in particular implies that
the exterior limit of the velocity, $\bu^{+}$, is $0$ on $\Gamma$.   
Using the jump conditions in~\cref{lem:jump-conds} again, we
get that $2\bmu = \bu^{-} - \bu^{+} = 0$.
Thus $(\cI - 2\cD_{k} - 2\cW)$ is invertible if $k^2$
is not a Dirichlet eigenvalue for the Stokes equation on $\Omega$.

To prove the converse, note that from \cref{thrm:rep-theorem} we have 
  \begin{equation} 
    \bS_{k} [\bt](\xx) - \bD_{k}[\bu](\xx) = \begin{cases} 
    \bu(\xx) &\quad \xx \in \Omega \, , \\
    0 &\quad \xx \in E \; .
    \end{cases}
  \end{equation}
  Suppose that $k^2$ is a Dirichlet eigenvalue for the Stokes equation
  on $\Omega$ and let $\bu$ denote the corresponding eigenfunction
  with $\bt^{-}$ the corresponding surface traction on the
  boundary $\Gamma$.
  Since $\bu$ is a Dirichlet eigenfunction, the velocity restricted
  to the boundary, $\bu^{-}$, is $0$. 
  Using the Green's theorem representation for the pair
  $(\bu,\bt^{-})$ and evaluating the surface traction
  using~\cref{lem:jump-conds}, we get
  \begin{equation}
    \bt^{-} = \left(\cDt_{k} + \frac{1}{2} \cI\right) \bt^{-} \,
    \implies \left(\cI - 2\cDt_{k} \right) 
    \bt^{-} = 0 \, .
  \end{equation}
  From the above and \cref{lem:nunull,lem:nutracli},
  we know that $\bt^{-}, \bnu \in \cN(\cI - 2\cDt_{k})$
  are two linearly independent vectors in the null space.
  Let $c = \left< \bt^{-},\bnu \right>$. Then, it
  follows that $\left<\bt^{-} -c \bnu, \bnu \right> = 0$
  and thus $\cW[\bt^{-} - c\bnu] = 0$. 
  Since $\bt^{-}$ and $\bnu$ are linearly independent, we note that
  $\bt^{-} -c\bnu \neq 0$. Combining these results, we get that
  $(\cI - 2\cDt_{k} - 2\cW) (\bt^{-} - c\bnu) = 0$. 
  Since $\bt^{-}-c\bnu$ is non-trivial, and $\cW$ is
  self-adjoint, it follows from the Fredholm alternative
  that the operator $\cI - 2\cD_{k} - 2\cW$ is also not
  invertible.
\end{proof}

The correspondence result above does not hold on multiply
connected domains.
In particular, while the operator $\cI -2\cD_{k} -2\cW$ 
is indeed not invertible when $k^2$ is a Dirichlet eigenvalue,
it turns out that the operator is also not
invertible when $k^2$ is a Neumann eigenvalue
corresponding to the interior of one of the obstacle regions,
i.e. one of the $\Omega_{i}$ with $i > 0$. The following theorem
proves this result for a region with one obstacle;
the extension to the general case is straightforward.

\begin{thrm}
  Suppose that $\Omega$ is a multiply connected domain
  given by the intersection of a domain $\Omega_{0}$
  and the exterior of a single domain $\Omega_{1}$ with
  $ \bar\Omega_1 \subset \Omega_0$.
  Then, the operator $\cI - 2\cD_{k} - 2\cW$ is not invertible
  if $k^2$ is a Neumann eigenvalue of $\Omega_{1}$.
\end{thrm}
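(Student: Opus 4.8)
The plan is to exhibit, explicitly, a nonzero density $\bmu \in C(\Gamma)$ in the null space of $\cI - 2\cDk - 2\cW$; since $\cDk$ is compact (\cref{lem:compact-sd}) and $\cW$ has finite rank, the existence of such a $\bmu$ already shows that $\cI - 2\cDk - 2\cW$ fails to be invertible. The density will be manufactured from a Neumann eigenfunction of the obstacle $\Omega_1$, transplanted to $\Gamma$ by extending it by zero off $\Gamma_1$.

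Concretely, let $(\bv,q)$ be a Neumann eigenfunction pair for $\Omega_1$ at the given value of $k^2$: it solves the oscillatory Stokes equations in $\Omega_1$, has vanishing surface traction on $\Gamma_1 = \partial\Omega_1$ (with the normal oriented out of $\Omega_1$, which is $-\bnu$ in the orientation of $\Gamma$ inherited from $\Omega$), and $\bv \not\equiv \bz$. Define $\bmu = \bv|_{\Gamma_1}$ on $\Gamma_1$ and $\bmu = \bz$ on the rest of $\Gamma$; this lies in $C(\Gamma)$ because the boundary components are disjoint closed curves and $\bv \in C(\overline{\Omega_1})$. I would then verify, in order: (i) $\bmu \neq \bz$ --- otherwise $\bv$ would have both vanishing Dirichlet and vanishing Neumann data on $\Gamma_1$, so \cref{thrm:rep-theorem} applied on $\Omega_1$ forces $\bv \equiv \bz$, a contradiction; (ii) applying \cref{thrm:rep-theorem} on $\Omega_1$ with vanishing traction, the $\Gamma_1$ double layer of $\bv$ (taken with the outward normal of $\Omega_1$) equals $-\bv$ inside $\Omega_1$ and $\bz$ outside $\overline{\Omega_1}$; (iii) because the double-layer kernel \cref{eq:doublelayer} is linear in the normal and $\bmu$ is supported on $\Gamma_1$, where the $\Omega$-normal is the negative of the $\Omega_1$-normal, the double layer $\bD_k[\bmu]$ attached to $\Omega$ equals $+\bv$ inside $\Omega_1$ and $\bz$ outside $\overline{\Omega_1}$, so in particular $\bD_k[\bmu] \equiv \bz$ throughout $\Omega$ (as $\Omega \cap \overline{\Omega_1} = \emptyset$); (iv) taking the limit onto $\Gamma$ from inside $\Omega$ and using \cref{lem:jump-conds}, $\bz = \bD_k[\bmu]^{-} = -\tfrac12\bmu + \cDk[\bmu]$ on $\Gamma$, i.e. $(\cI - 2\cDk)\bmu = \bz$; (v) $\cW[\bmu] = \bz$, because $\int_\Gamma \bnu \cdot \bmu \, dS = -\int_{\Gamma_1} \bnu_{\Omega_1} \cdot \bv \, dS = -\int_{\Omega_1} \nabla \cdot \bv \, dV = 0$ by the divergence theorem and incompressibility, where $\bnu_{\Omega_1}$ denotes the outward normal of $\Omega_1$. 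Combining (iv) and (v) gives $(\cI - 2\cDk - 2\cW)\bmu = \bz$ with $\bmu \neq \bz$, which is the claim; the general case of $m > 1$ obstacles is identical, with $\bmu$ supported on the relevant $\Gamma_j$.

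The one piece of bookkeeping that must be handled carefully is the orientation of the normal: the double layer associated with the multiply connected domain $\Omega$ uses the normal pointing out of $\Omega$, which along $\Gamma_1$ is the inward normal of $\Omega_1$, creating the sign flip between steps (ii) and (iii). This flip is harmless --- it merely swaps the ``inside $\Omega_1$'' and ``outside $\overline{\Omega_1}$'' branches of the representation formula, and $\Omega$ sits on the exterior side of $\Gamma_1$ under either convention --- but it must be tracked in order to land on $\bD_k[\bmu] = \bz$ in $\Omega$. The only other thing to confirm is that a Neumann eigenfunction is smooth enough (e.g. in $A(\Omega_1)$, with a continuous surface traction) for \cref{thrm:rep-theorem} and \cref{lem:jump-conds} to be applicable, which follows from interior analyticity (\cref{cor:analytic}) together with standard elliptic regularity up to the $C^2$ boundary, as is already used implicitly in the interior uniqueness theorems. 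No step beyond this is genuinely difficult; in contrast to the converse half of \cref{thm:dlmain}, no correction by a multiple of $\bnu$ is needed here, since the transplanted density automatically satisfies $\cW[\bmu] = \bz$.
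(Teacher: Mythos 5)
Your proposal is correct and follows essentially the same route as the paper: both transplant the boundary trace of a Neumann eigenfunction of $\Omega_1$ to a density supported on $\Gamma_1$ (extended by zero on $\Gamma_0$), use \cref{thrm:rep-theorem} with vanishing traction together with the normal-orientation sign flip to get $(\cI-2\cD_k)\bmu=0$, and observe that incompressibility forces $\cW[\bmu]=0$. If anything, your write-up is slightly more careful than the paper's in explicitly checking the equation on the outer boundary $\Gamma_0$ via the exterior branch of the representation formula, but this is a refinement of the same argument rather than a different one.
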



\begin{proof}
  Suppose that $k^2$ is a Neumann eigenvalue of $\Omega_{1}$, and
  let $\tilde{\bu}$ denote the corresponding eigenfunction. 
  Note that $\tilde{\bu}$ is not identically $0$ on the boundary
  of $\Omega_{1}$, which we denote by $\Gamma_1$.
  Since $\tilde{\bu}$ is an interior Neumann eigenfunction, we note that
  the surface traction corresponding to the solution, $\bt^{-}$, is $0$
  on the boundary.
  Applying \cref{thrm:rep-theorem} to the
  solution $\tilde{\bu}$ in the interior
  and taking the interior limit we get
\begin{equation}
\tilde{\bu} = \frac{1}{2}\tilde{\bu} + \cD^{\Gamma_{1}}_{k}[\tilde{\bu}] +
\cS^{\Gamma_{1}} [\bt^{-}] \implies \frac{1}{2} \tilde{\bu} - \cD^{\Gamma_{1}}_{k}[\tilde{\bu}]
= 0\, .
\end{equation}
Note that the sign of the $\bD$ operator in the
representation theorem is switched since the normal
is pointing inwards for the boundary $\Omega_{1}$.
Thus, $\tilde{\bu}$ is a non-trivial null vector of the operator 
$\frac{1}{2}\cI - \cD^{\Gamma_{1}}_{k}$. 
Furthermore since $\tilde{\bu}$ is the boundary data of 
the solution of the oscillatory Stokes equation
in $\Omega_{1}$, we get that $\cW^{\Gamma_{1}}[\tilde{\bu}] = 0$.
Setting $\bmu = \tilde{\bu}$ on $\Gamma_{1}$, and
$\bmu = 0$ on $\Gamma_{0}$, we obtain a non-trivial null
vector for the operator $\cI - 2\cD^{\Gamma}_{k} -2\cW^{\Gamma}$
on the boundary $\Gamma = \Gamma_{0} \cup \Gamma_{1}$.
\end{proof}

The spurious eigenvalues of the operator $\cI-2\cDk-2\cW$
are demonstrated in \cref{subsec:spurannulus} on an annulus,
where both the true and spurious eigenvalues can be
determined analytically. 
Analogous with the observation in~\cite{zhao2015robust},
this lack of one-to-one correspondence between
the invertibility of the integral operator $\cI - 2\cD_{k} - 2\cW$
and the Dirichlet eigenvalues of the Stokes operator on multiply
connected domains also causes non-robustness and introduces
near-resonances for simply-connected domains which are almost
multiply-connected.

\subsubsection{Dirichlet eigenvalues --- combined-field representation}
\label{subsec:mixedanalysis}

The non-robustness in using the double layer
potential representation can be
remedied by using a combined-field, or mixed layer potential,
representation, i.e. setting $\bu = (\bD_{k} + i\eta \bS_{k})
\bmu$, with $\eta$ real and positive.
Imposing the Dirichlet boundary condition
and using~\cref{lem:jump-conds}, we obtain 
\begin{equation}
  (\cI - 2\cD_{k} - 2i\eta \cS_{k}) \bmu = -2 \ff \,
  \textrm{ on } \Gamma. 
\end{equation}
As with the double layer representation, this
integral equation is rank deficient for any $k$.
Instead, we consider
\begin{equation}
(\cI - 2\cD_{k} -2i\eta \cS_{k}  -2\cW)\bmu = -2 \ff \, .
\end{equation}

We now prove that for any bounded region $\Omega$
(simply or multiply connected) with $C^{2}$ boundaries,
there exists a one-to-one correspondence between the
invertibility of the operator $\cI - 2\cD_{k}
- 2i\eta \cS_{k} - 2\cW$ and the Dirichlet eigenvalues.

\begin{thrm}
  \label{thm:cfmain}
  Suppose $\Omega$ is a bounded region defined
  by the intersection of a simply connected domain $\Omega_{0}$
  and the exteriors of a finite collection bounded simply
  connected domains $\{ \Omega_{i} \}_{i=1}^{m}$. As above,
  let $\Gamma_{i}$ denote the boundary of $\Omega_{i}$ and let
  $\Gamma = \cup_{i=0}^{m} \Gamma_{i}$ denote the boundary of
  $\Omega$. Then, the operator $\cI - 2\cD_{k} - 2i\eta \cS_{k}
  - 2\cW$ is invertible if and only if $k^2$ is not a Dirichlet
  eigenvalue for the Stokes operator on $\Omega$.
\end{thrm}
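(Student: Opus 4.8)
The plan is to follow the two-direction structure of \cref{thm:dlmain}, but now exploiting the combined-field representation to eliminate the spurious Neumann resonances. First I would prove the ``only if'' direction (invertibility away from eigenvalues). Suppose $k^2$ is not a Dirichlet eigenvalue and let $\bmu$ satisfy $(\cI - 2\cD_{k} - 2i\eta\cS_{k} - 2\cW)\bmu = 0$. Applying $\cW$ to this equation and using \cref{lem:propnullspacecorr} (in particular $\cW[\bmu - 2\cD_{k}\bmu]=0$ and $\cW[\cS_{k}\bmu]=0$), I get $\cW[\bmu]=0$, so $\int_\Gamma \bmu\cdot\bnu\,dS=0$ and the equation collapses to $(\cI - 2\cD_{k} - 2i\eta\cS_{k})\bmu = 0$. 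Now set $\bu = (\bD_{k} + i\eta\bS_{k})[\bmu]$ in $\Omega$. By \cref{lem:jump-conds}, the interior trace is $\bu^- = (\cD_{k} + i\eta\cS_{k} - \tfrac12\cI)\bmu \cdot(-2)\cdot(-\tfrac12)$ — more carefully, $\bu^- = -\tfrac12\bmu + (\cD_k + i\eta\cS_k)\bmu = \tfrac12(\cI - 2\cD_k - 2i\eta\cS_k)\bmu\cdot(-1)$... i.e. $\bu^- = 0$ on $\Gamma$, so by non-eigenvalue-ness $\bu\equiv 0$ in $\Omega$. Hence $\bt^- = 0$ on $\Gamma$ as well.

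The key new ingredient is that the combined-field potential in the exterior $E$ carries \emph{both} a Dirichlet and a traction jump, and this is what kills the spurious modes. Taking exterior traces via \cref{lem:jump-conds}: $\bu^+ = \bu^- - (\bu^- - \bu^+) = \bu^- + \bmu$ (from the double-layer jump, since the single layer is continuous), so $\bu^+ = \bmu$; and $\bt^+ = \bt^- + \bsigma$-jump of the single layer $= \bt^- - i\eta\bmu$ (the double-layer traction is continuous), so $\bt^+ = -i\eta\bmu$. Thus $\bu^+$ and $\bt^+$ on $\Gamma$ satisfy $\bt^+ + i\eta\bu^+ = -i\eta\bmu + i\eta\bmu = 0$, i.e. $\bu$ is a radiating solution of the exterior \emph{impedance} problem with zero data. (Radiating: $\cW[\bmu]=0$ gives $\int_\Gamma\bmu\cdot\bnu = 0$, so both the single-layer and double-layer parts are radiating by the corollaries in \cref{sec:analysis}.) By the uniqueness of the exterior impedance problem, $\bu\equiv 0$ in $E$, hence $\bmu = \bu^+ = 0$. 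This is the step I expect to be cleanest conceptually but where the bookkeeping of signs and jump relations (and confirming the sign of $\eta$ matches the exterior impedance sign convention in \cref{def:imp_exterior}) is the main place to be careful.

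For the converse, suppose $k^2$ is a Dirichlet eigenvalue with eigenfunction $\bu$ and interior surface traction $\bt^-$; since $\bu^-=0$, the representation theorem \cref{thrm:rep-theorem} gives $\bS_k[\bt^-] = \bu$ in $\Omega$ and $=0$ in $E$. Taking the interior traction via \cref{lem:jump-conds} yields $(\cI - 2\cDt_k)\bt^- = 0$. I then want a density $\bmu$ with $(\cI - 2\cD_k - 2i\eta\cS_k - 2\cW)^\intercal\bmu = 0$; since the single-layer operator $\cS_k$ is self-adjoint with respect to the bilinear form \cref{eq:bi_form} and $\cW^\intercal = \cW$, the adjoint operator is $\cI - 2\cDt_k - 2i\eta\cS_k - 2\cW$. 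Using $\bS_k[\bt^-] = 0$ in $E$ (hence $\cS_k[\bt^-] = 0$ on $\Gamma$, as $\bS_k[\bt^-]$ is continuous across $\Gamma$ and vanishes from the exterior), and $\cW[\bt^-]$: here I would note that $\bt^-$ is the traction of a divergence-free field so $\cW[\bt^-] = 0$ need not hold directly, but $\bt^- - c\bnu$ with $c = \langle\bt^-,\bnu\rangle/|\Gamma|$-type correction satisfies $\cW[\bt^- - c\bnu] = 0$, and $\bnu\in\cN(\cI - 2\cDt_k)$ by \cref{lem:nunull} while $\cS_k[\bnu] = 0$ in $\Omega$ (divergence theorem, as in \cref{lem:nutracli}) forces $\cS_k[\bnu]=0$ on $\Gamma$ by continuity. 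Hence $(\cI - 2\cDt_k - 2i\eta\cS_k - 2\cW)(\bt^- - c\bnu) = 0$, and $\bt^- - c\bnu\neq 0$ by \cref{lem:nutracli}. By the Fredholm alternative, $\cI - 2\cD_k - 2i\eta\cS_k - 2\cW$ is not invertible. The main obstacle is the converse: pinning down that $\cS_k[\bnu]$ and $\cS_k[\bt^-]$ vanish \emph{on the boundary} (not just in one region) via the continuity of the single layer, and verifying the null-space correction $c\bnu$ interacts correctly with all three operators simultaneously — this is exactly where the multiply-connected case could hide a subtlety, so I would check it component-by-component on each $\Gamma_i$.
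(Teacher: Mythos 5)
Your converse direction (non-invertibility at an eigenvalue) is essentially the paper's argument and is correct: $\cS_k[\bt^-]=0$ on $\Gamma$ by the representation theorem and continuity of the single layer, $(\cI-2\cDt_k)\bt^-=0$ from the interior traction limit, $\cS_k[\bnu]=0$ and $\bnu\in\cN(\cI-2\cDt_k)$, and the corrected density $\bt^--c\bnu$ is a nontrivial null vector of the adjoint, so the Fredholm alternative finishes it. The forward direction also starts correctly ($\cW[\bmu]=0$, reduction to $(\cI-2\cD_k-2i\eta\cS_k)\bmu=0$, $\bu^-=0$, hence $\bu\equiv 0$ in $\Omega$ and $\bt^-=0$, and the exterior traces satisfy $\bt^++i\eta\bu^+=0$).

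The genuine gap is in the final step of the forward direction: you conclude ``by the uniqueness of the exterior impedance problem, $\bu\equiv 0$ in $E$.'' But for a multiply connected $\Omega$, the set $E=\R^2\setminus\bar\Omega$ is \emph{not} an exterior domain of the type covered by that uniqueness theorem --- it is the disjoint union of the unbounded component $E_0=\R^2\setminus\bar\Omega_0$ with the bounded obstacle regions $\Omega_1,\dots,\Omega_m$. The exterior impedance uniqueness theorem (whose proof lives entirely at infinity, via the radiation condition and the Rellich-type lemma) gives $\bu\equiv 0$ only in $E_0$, hence $\bmu=0$ only on $\Gamma_0$. On each bounded component $\Omega_j$, $j\ge 1$, you must instead invoke the uniqueness of the \emph{interior} impedance problem, and here the sign you flagged in passing is decisive: on $\Gamma_j$ the normal $\bnu$ points out of $\Omega$ and therefore \emph{into} $\Omega_j$, so the relation $\bt^++i\eta\bu^+=0$ written with respect to the outward normal of $\Omega_j$ becomes $\bt-i\eta\bu=0$, which is exactly the homogeneous version of \cref{def:int_imp} (uniquely solvable for $\Re(\eta)>0$). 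This two-case split is not bookkeeping; it is the entire content of the theorem. The simply connected case is already handled by \cref{thm:dlmain} with the double layer alone, and the reason the double layer fails on multiply connected domains is precisely that the analogue of this step in \cref{thm:dlmain} requires an interior \emph{Neumann} uniqueness statement in $\Omega_j$, which fails at the spurious resonances. The combined field succeeds because the interior impedance problem never resonates. As written, your argument would ``prove'' the (false) multiply connected version of \cref{thm:dlmain} just as readily, which is the sign that the bounded components of $E$ have not actually been handled.
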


\begin{proof}
  Suppose that $k^2$ is not a Dirichlet eigenvalue for the
  Stokes equation on $\Omega$. Suppose further that $\bmu$ satisfies
  \begin{equation}
    (\cI - 2\cD_{k} -2i\eta\cS_{k} - 2\cW) \bmu = 0 \, ,
    \label{eq:mlproofrep}
  \end{equation}
i.e. $\bmu$ is in the null-space
of $(\cI - 2 \cD_{k} - 2i\eta \cS_{k} - 2 \cW)$. 
Applying the operator $\cW$ to~\cref{eq:mlproofrep} and 
using~\cref{lem:propnullspacecorr}, we get
\begin{equation}
0 = \cW [(\cI - 2\cD_{k} - 2i\eta\cS_{k} - 2\cW)\bmu] = -2\cW [\bmu] \, .
\end{equation}
Thus~\cref{eq:mlproofrep} reduces to
\begin{equation}
(\cI - 2 \cD_{k} - 2i\eta \cS_{k})\bmu = 0\, .
\end{equation}
Suppose now $\bu = -2\bD_{k}[\bmu] -2i\eta\bS_{k}[\bmu]$
in $\Omega$. Then $\bu$ is a solution to the oscillatory
Stokes equation in $\Omega$, and applying~\cref{lem:jump-conds},
we get that the interior limit of the velocity
$\bu^{-} = (\cI - 2\cD_{k} -2i\eta\cS_{k}) \bmu = 0$ on $\Gamma$.
Since $k^2$ is not a Dirichlet eigenvalue for the Stokes equation
on $\Omega$, we conclude that $\bu \equiv 0$ in $\Omega$. 
This in particular implies that the interior limit of the
surface traction, denoted by $\bt^{-}$, is $0$ on $\Gamma$.

Using~\cref{lem:jump-conds} we observe that $\bt^{+}
= 2i\eta \bmu(\xx)$ and $\bu^{+} = -2\bmu(\xx)$ on $\Gamma$, i.e.
$\bt^{+}+i\eta \bu^{+}=0$ and $\bu^{+}$ satisfies the homogeneous
exterior impedance problem.
We first show that $\bmu=0$ on $\Gamma_{0}$. 
To this end, note that $\bu$ is a radiating solution
of the oscillatory Stokes equation in the exterior $E$,
since $\cW[\bmu] = 0$ implies that $\int_{\Gamma} \bmu
\cdot \bnu = 0$. From the uniqueness of the impedance problem
in the exterior $E_0$ of $\Omega_0$, we conclude that
$\bu \equiv 0$ in $E_0$ as well, which in particular
implies that $\bu^{+}=0$ on $\Gamma_{0}$.   
Using the jump conditions in~\cref{lem:jump-conds}
again, we get that
$2\bmu = \bu^{-} - \bu^{+} = 0$ on
$\Gamma_{0}$. 

\begin{remark}
  Note that there is potential for confusion here
  in that the exterior limit with respect to $\Omega$ 
  for the boundary $\Gamma_{j}$ is the traditional interior 
  limit with respect to the obstacle region $\Omega_{j}$.
\end{remark}

To show that $\bmu=0$ on
$\Gamma_{j}$, we observe that $\bu$ is also a
solution to the oscillatory Stokes equation in each
of the obstacles $\Omega_{j}$.
Using the jump conditions in~\cref{lem:jump-conds},
we get that $\bt^{+} = 2i\eta \bmu$ and $\bu^{+} = -2\bmu$.
However, the normal is inward pointing inside $\Omega_{j}$
on the boundary $\Gamma_{j}$. 
If we revert back to the normal being defined as 
an outward normal to $\Omega_{j}$, then
the boundary conditions on $\Gamma_{j}$ 
is $\bt - i \eta \bu = 0$. 
From the uniqueness of solutions to the interior 
impedance problem, we conclude that $\bu \equiv 0$
in $\Omega_{j}$, which in particular implies
that $2\bmu = \bu^{-}-\bu^{+} = 0$ for $\xx \in \Gamma_{j}$,
$j=1,2,\ldots m$.
Thus, $\cI - 2\cD_{k} -2i\eta\cS_{k} -2\cW$ is
invertible when $k^2$ is not a Dirichlet eigenvalue
for the Stokes equation on $\Omega$.

From \cref{thrm:rep-theorem}, we have 
\begin{equation} 
  \bS [\bt](\xx) - \bD[\bu](\xx) = \begin{cases} 
    \bu(\xx) &\quad \xx \in \Omega \, , \\
    0 &\quad \xx \in \R^2 \setminus \bar{\Omega} \; .
    \end{cases}
  \end{equation}
Suppose that $k^2$ is Dirichlet eigenvalue for
the Stokes equation on $\Omega$ and let $\bu$
denote the corresponding eigenfunction and $\bt$ denote
its surface traction. Note that \cref{thrm:rep-theorem}
implies that $\cS_{k}[\bt^{-}] = 0$, since
$\bu^{-}=0$ on $\Gamma$. Applying \cref{thrm:rep-theorem}
to the pair $\bt^{-}, \bu^{-}$ and evaluating the
traction on $\Gamma$ using~\cref{lem:jump-conds},
we get
\begin{equation}
\bt^{-} = (\cDt_{k} + \frac{1}{2} \cI) \bt^{-} \, . 
\end{equation}
Combining these two identities, we get
that
\begin{equation}
(\cI - 2\cDkt -2i\eta \cS_{k}) \bt^{-} = 0 \, .
\end{equation}
As in the proof of~\cref{thm:dlmain}, letting
$c = \left< \bt^{-} ,\bnu \right>$, it follows that
\begin{equation}
  (\cI - 2\cDkt - 2i \eta \cS_{k} - 2\cW)
  (\bt^{-} - c\bnu) = 0 \, ,
\end{equation}
where $\bt^{-} - c\bnu \neq 0$.
Since $\bt^{-} - c\bnu$ is non-trivial and both
$i\eta \cS_{k}$ and $\cW$ are self-adjoint with respect
to the bilinear form \cref{eq:bi_form},
it follows from the Fredholm alternative
that the operator $\cI - 2\cDk -2i\eta \cSk - 2\cW$
is also not invertible.
\end{proof}

\subsection{Fredholm determinants}
\label{sec:dets}
In this section, we show
how the Fredholm determinant can be used
as a computational tool for detecting the
non-invertibility of $\cI - 2\cD_{k} - 2\cW$.
The arguments here follow the structure of the
analogous arguments in~\cite{zhao2015robust}
for Laplace eigenvalues.

Let $\cJ_{1}(X)$ denote the space of trace class operators 
on $X$, where $X$ is a Hilbert space, which is a
subspace of the space of compact operators on $X$.
A compact operator $\cA$ with eigenvalues
$\lambda_{i}, i\in \mathbb{N}$ is in $\cJ_{1}(X)$ if
$\sum_{i} |\lambda_{i}| < \infty$.
If $\cA$, is a trace class operator, then  
the Fredholm determinant of the operator $\cI + \cA$
is defined by
\begin{equation}
\text{det}(\cI +\cA) = \prod_{i=1}^{\infty} (1+\lambda_{i}) \, .
\end{equation}

So far, we have discussed the Fredholm theory in the space
$C(\Gamma)\times C(\Gamma)$ equipped with the bilinear form~\cref{eq:bi_form}.
However, it is more convenient to discuss the theory of Fredholm
determinants on Hilbert spaces. 
We note that both the operators $\cD_{k}$ and $\cS_{k}$ 
are also compact operators mapping $Y \to Y$
where $Y = \mathbb{L}^{2}(\Gamma) \times \mathbb{L}^{2}(\Gamma)$.
Furthermore, it is well-known that the spectrum of compact
operators with weakly singular kernels coincide on 
$C(\Gamma)\times C(\Gamma)$ and $Y$ (see~\cite{kress1989linear},
for example).
So for the rest of the section, we present the discussion of 
the relevant operators on $Y$ instead of $C(\Gamma)\times C(\Gamma)$.


The operator $-2\cD_{k} - 2\cW$ is trace class:
\begin{lem}
  Suppose that $\Gamma$ is a $C^2$ curve.
  Then $-2\cD_{k} - 2\cW \in \cJ_1(Y)$
  for all $k \in \mathbb{C} \setminus \{0\}$ 
\end{lem}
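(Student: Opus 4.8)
The plan is to show that $-2\cD_k - 2\cW$ is trace class on $Y = \mathbb{L}^2(\Gamma)\times\mathbb{L}^2(\Gamma)$ by splitting it into two pieces and handling each separately. The operator $\cW$ is finite rank --- indeed rank at most two, since its range is spanned by the (vector-valued) functions $\bnu$ on each connected component, or really just the single function $\bx \mapsto \bnu(\bx)$ integrated against $\bnu(\by)\cdot\bmu(\by)$ --- and every finite rank operator is automatically trace class, so $\cW \in \cJ_1(Y)$ trivially. It therefore suffices to prove $\cD_k \in \cJ_1(Y)$, and the rest of the argument concentrates on the double layer operator.

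For $\cD_k$, the key structural fact I would exploit is the kernel decomposition already recorded in \cref{eq:stokesdlkernel}: writing $\bmu = \bnu\mu_\nu + \btau\mu_\tau$, the double-layer kernel is a sum of terms of the form $\nabla\Glap$, $\nabla^\bot\partial_{\nu\tau}\Gbh$, and $\nabla^\bot(\partial_{\tau\tau}-\partial_{\nu\nu})\Gbh$. Since $\Gbh = \frac{1}{k^2}(\Glap + \text{(smooth Helmholtz part)})$, each kernel entry is, up to smooth ($C^\infty$) remainders, built out of first and second derivatives of the logarithm $\frac{1}{2\pi}\log|\bx-\by|$ paired against geometric factors ($\bnu(\by)$, $\btau(\by)$, and the curvature). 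The standard computation for the classical Stokes double layer on a $C^2$ curve shows that the apparently hypersingular $\partial_{\nu\tau}$ and $(\partial_{\tau\tau}-\partial_{\nu\nu})$ combinations, contracted with $\nu_\ell(\by)$ as in \cref{eq:doublelayer}, collapse to a kernel that is actually bounded (with a jump across the diagonal that is exactly the principal-value/jump structure), with leading behavior controlled by the curvature of $\Gamma$. The upshot is that the kernel of $\cD_k$ can be written as $K(\bx,\by)$ with $|K(\bx,\by)| \le C$ for $C^2$ curves, and in fact $K$ is continuous off the diagonal with at worst a jump discontinuity across it.

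Having reduced to a bounded (essentially continuous) kernel on a compact curve $\Gamma$, I would invoke the classical sufficient condition for trace class: an integral operator on $\mathbb{L}^2$ of a compact measure space with a kernel that is, say, $C^1$ (or more than Hölder-$1/2$, or has a rapidly converging expansion in a smooth orthonormal basis) is trace class. Concretely, on a $C^2$ closed curve one has that $\cD_k$ maps into $C^1(\Gamma)$ or better, and operators smoothing by one derivative on a one-dimensional compact manifold have singular values decaying like $O(1/n)$ --- just barely not summable --- so one must be slightly more careful: the correct statement is that $\cD_k$ is the sum of a smoothing operator of order $\ge 2$ (from the Helmholtz remainder and the leading log-derivative cancellations, which gain regularity) plus a genuinely bounded-kernel piece with enough extra Hölder regularity in the off-diagonal variable to push the singular values into $\ell^1$. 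Alternatively, and more cleanly, I would factor $\cD_k = \cA\cB$ where both $\cA$ and $\cB$ are Hilbert--Schmidt --- e.g. splitting one derivative's worth of smoothing between two weakly-singular (hence Hilbert--Schmidt, since $|\log|\bx-\by||^2$ is integrable on $\Gamma\times\Gamma$) factors --- and use the fact that the product of two Hilbert--Schmidt operators is trace class.

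The main obstacle is precisely this borderline regularity issue: a naive bound showing the kernel of $\cD_k$ is merely bounded (or merely $\mathbb{L}^2$ on $\Gamma\times\Gamma$) gives only Hilbert--Schmidt, not trace class, and trace class on a $1$-manifold genuinely requires capturing a full derivative of smoothing (or a Hilbert--Schmidt factorization). I would resolve it by carefully extracting, from the kernel decomposition \cref{eq:stokesdlkernel}, the cancellation that makes the $\Gbh$-derivative terms smoothing of order $\ge 2$ on a $C^2$ curve (this is where the $C^2$ hypothesis on $\Gamma$ is used), writing $\cD_k = \cD_k^{(0)} + \cD_k^{(1)}$ with $\cD_k^{(0)}$ having a $C^1$ (Lipschitz) kernel --- hence Hilbert--Schmidt with an extra derivative, giving trace class by the factorization argument --- and $\cD_k^{(1)}$ a genuinely smooth remainder from the Helmholtz Green's function (entire in its argument), which is trace class a fortiori. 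Adding back the rank-$\le 2$ operator $-2\cW$ preserves the trace class property, completing the proof for all $k \in \mathbb{C}\setminus\{0\}$.
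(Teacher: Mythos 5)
Your proposal is correct and follows essentially the same route as the paper: the paper notes (via Bessel function asymptotics) that the kernel of $\cD_k$ has leading singularity $|\bx-\by|^2\log|\bx-\by|^2$ --- i.e.\ it is better than H\"older-$1/2$, exactly the regularity you extract with your ``Lipschitz plus smooth'' decomposition --- and then invokes the trace-class criteria of Bornemann, treating $\cW$ as a finite-rank (in fact rank-one) perturbation just as you do. The only cosmetic difference is that your intermediate description of the on-surface kernel as ``bounded with a jump across the diagonal'' undersells its actual smoothness (and conflates the off-surface jump relations with the diagonal behavior of the restricted kernel), but you correctly identify this as the crux and repair it before concluding.
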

\begin{proof}
Using Bessel function asymptotics, we note that the
kernel of $\cD_{k}$ given by $\TT_{\cdot,\cdot,\ell}\nu_{\ell}(\bx,\by)$
has a leading order singularity of
$|\bx-\by|^2 \log{|\bx-\by|^2}$ as $\bx\to \by$ 
for all $k \in \mathbb{C} \setminus \{ 0\}$.
It follows from the criteria listed
in~\cite[Sec. 2]{bornemann2010numerical} that $\cD_{k}$
is a trace-class operator.
Since $\cW$ is a rank-one perturbation independent of $k$,
and trace-class operators are a vector space, we conclude
that $-2\cD_{k} - 2\cW$ is also a trace-class operator.
\end{proof}

Let $f(k) = \text{det}(\cI - 2\cD_{k} - 2\cW)$.
First, note that $f(k)$ is an analytic function of $k$
for $k \in \mathbb{C} \setminus \{0 \}$, since the kernel
of $\cD_{k}$ is an analytic function of $k$ on that domain, 
and the Fredholm determinant of an analytic operator 
is analytic on the domain of analyticity
of the operator (see~\cite{zhao2015robust}, for example).

The zeros of the Fredholm determinant indicate when the
operator is not invertible.
The following lemma summarizes this result.
\begin{lem} \label{lem:detzeros}
  With $f(k)$ defined as above, $f(k) = 0$ if and only if
  $\cI - 2\cD_{k} -2\cW$ is not invertible.
\end{lem}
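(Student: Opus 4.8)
The plan is to invoke the standard equivalence between zeros of a Fredholm determinant and non-invertibility of the corresponding operator, exactly as in the scalar Laplace case treated in \cite{zhao2015robust}. The key structural fact is that $\cI - 2\cD_{k} - 2\cW$ is of the form $\cI + \cA$ with $\cA = -2\cD_{k} - 2\cW$ a trace class (hence compact) operator on the Hilbert space $Y = \mathbb{L}^2(\Gamma)\times\mathbb{L}^2(\Gamma)$, as established in the preceding lemma. For such operators, the general theory of Fredholm determinants (see e.g. \cite{bornemann2010numerical} or the references in \cite{zhao2015robust}) gives that $\det(\cI+\cA) \neq 0$ if and only if $\cI+\cA$ is boundedly invertible.

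First I would recall the precise statement from the trace class determinant theory: if $\cA \in \cJ_1(Y)$, then $1$ is an eigenvalue of $-\cA$ (equivalently $0 \in \sigma(\cI+\cA)$, equivalently $\cI+\cA$ is not invertible by the Fredholm alternative for $\cI$ plus compact) if and only if one of the factors $(1+\lambda_i)$ in the product $\det(\cI+\cA) = \prod_i (1+\lambda_i)$ vanishes, i.e. if and only if $\det(\cI+\cA) = 0$. The direction ``$f(k)=0 \implies$ not invertible'' uses that a vanishing factor corresponds to an eigenvalue $\lambda_i = -1$ of $\cA$, so $\cI + \cA$ has nontrivial kernel. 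The converse direction uses that if $\cI+\cA$ is not invertible, then by the Fredholm alternative (applicable since $\cA$ is compact) it has nontrivial null space, so $-1$ is an eigenvalue of $\cA$, contributing a zero factor; one must note the infinite product converges absolutely (since $\sum|\lambda_i| < \infty$) so a single zero factor forces the product to be zero.

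Second, I would address the one subtlety specific to this setting: the paper's Fredholm analysis in \cref{thm:dlmain} was carried out on $C(\Gamma)\times C(\Gamma)$ with the bilinear pairing \cref{eq:bi_form}, whereas determinants require the Hilbert space $Y$. As already noted in the text preceding the lemma, the spectra of compact operators with weakly singular kernels coincide on $C(\Gamma)\times C(\Gamma)$ and on $Y$ (citing \cite{kress1989linear}), and in particular $\cI - 2\cD_k - 2\cW$ is invertible on one space precisely when it is invertible on the other. So the non-invertibility appearing in the lemma statement is unambiguous, and the determinant computed on $Y$ detects it faithfully.

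The main obstacle, such as it is, is bookkeeping rather than mathematics: one must be careful that the relevant eigenvalues are those of the compact part $\cA = -2\cD_k - 2\cW$ (not of $\cD_k$ alone), that the rank-one term $-2\cW$ is correctly incorporated into the trace class operator so the determinant is well-defined, and that the Fredholm alternative is being applied to $\cI + (\text{compact})$, which is legitimate here. Since all of these pieces — trace class membership, analyticity, the abstract determinant/invertibility equivalence — are either proved just above or quoted directly from \cite{zhao2015robust,bornemann2010numerical,kress1989linear}, the proof is short: it is essentially the observation that $\det(\cI+\cA)=\prod(1+\lambda_i)$ vanishes iff some $\lambda_i=-1$ iff $\ker(\cI+\cA)\neq\{0\}$ iff $\cI+\cA$ is not invertible (the last step by the Fredholm alternative for compact perturbations of the identity).
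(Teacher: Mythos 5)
Your proposal is correct and matches the paper's approach: the paper simply cites the standard trace-class determinant fact that $\det(\cI+\cA)=0$ iff $\cI+\cA$ is not invertible (referring to Simon's \emph{Trace ideals}), which is exactly the equivalence you spell out via the eigenvalue product and the Fredholm alternative. Your additional remark about the coincidence of spectra on $C(\Gamma)\times C(\Gamma)$ and $\mathbb{L}^2(\Gamma)\times\mathbb{L}^2(\Gamma)$ is a point the paper already handles in the paragraph preceding the lemma, so nothing further is needed.
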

\begin{proof}
  The proof is standard; see, for example,
  \cite[p. 34]{simon2005trace}.
\end{proof}

When $\Omega$ is simply connected, \cref{lem:detzeros} and 
\cref{thm:dlmain} together imply that $f(k) = 0$
if and only if $k^2$ 
is a Dirichlet eigenvalue of the Stokes equation.
This reduces the problem of finding eigenvalues to
finding the roots of an analytic function.

We now show how this fact can be used to numerically
estimate the Dirichlet eigenvalues.
Suppose that $D_{k}^{N}$ is a Nystr\"{o}m discretization 
of the operator $-2\cD_{k} - 2\cW$ when the boundary 
$\Gamma$ is discretized with $N$ points. 
Let $f^{N}(k) = \text{det}(I + D_{k}^{N})$
where here $\text{det}$ is the standard matrix determinant.
Note that the discretized matrix also depends on the choice
of quadrature rule used in the Nystr\"{o}m discretization
of the operator.

In~\cite{zhao2015robust}, the authors prove that
for computing the Laplace eigenvalues on regions with 
analytic boundaries, when the integral operators are
discretized using Kress quadrature --- a spectrally accurate
quadrature rule for such kernels, see~\cite{kress1991boundary} ---
the determinant of the Nystr\"{o}m discretized operators
at the true eigenvalues converge to $0$ exponentially
in $N$.
Thus, if the eigenvalues have multiplicity $1$,
i.e. the derivative of the determinant is non-zero
at the true-eigenvalues, then the analyticity of the
discretized determinant implies that the zeros
of the determinant of the Nystr\"{o}m discretization
of the linear operator converge exponentially to
the true Dirichlet eigenvalues for Laplace's equation.

The proof presented in~\cite{zhao2015robust} applies
to the BIE approach for computing the Dirichlet eigenvalues
of the Stokes operator as well.
The result is summarized below.
\begin{thrm}
\label{thm:mainconvfreddet}
Suppose that $\Omega$ is a simply connected
domain with an analytic boundary. Let $k_{j}^2$, $j=1,2,\ldots M$
denote all the Dirichlet eigenvalues of Stokes equation
on $\Omega$ contained in the interval $[a,b]$. 
Suppose further that all the eigenvalues have multiplicity $1$.
Let $f^{N}(k) = \text{det}(I+D^{N}_{k})$, where $D^{N}_{k}$ 
is the Nystr\"{o}m discretization of $-2\cD_{k} - 2\cW$ with Kress
quadrature.
Then there exists $N_{0} \in \mathbb{N}$ such
that for all $N>N_{0}$, 
$f^{N}(k)$ has exactly $M$ zeros on the interval $[a,b]$.
Let $\omega_{j}$, $j=1,2\ldots M$ denote the zeros of $f^{N}$.
Furthermore, there exist constants $a>0$ and $C$, 
such that $\sup_{j=1}^{M} |\omega_{j} - k_{j}| < C e^{-aN}$.
\end{thrm}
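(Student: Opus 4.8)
The plan is to follow the argument of~\cite{zhao2015robust} verbatim, substituting the oscillatory Stokes operator $-2\cD_{k} - 2\cW$ for the Laplace double-layer operator and using the results already established in this paper. The key analytic ingredients are: (i) $f(k) = \text{det}(\cI - 2\cD_{k} - 2\cW)$ is analytic on $\mathbb{C}\setminus\{0\}$ with zeros exactly at the Dirichlet eigenvalues (by \cref{thm:dlmain} and \cref{lem:detzeros}); (ii) on an analytic boundary, Kress quadrature applied to the weakly singular kernel of $\cD_{k}$ converges spectrally, so the discretized determinant $f^{N}(k)$ converges to $f(k)$ exponentially in $N$, \emph{uniformly} for $k$ in the compact interval $[a,b]$ (this is where the analyticity of $\Gamma$ and the explicit $|\bx-\by|^2\log|\bx-\by|$ leading singularity are used); and (iii) each $f^{N}$ is itself analytic in $k$ on $\mathbb{C}\setminus\{0\}$, being a polynomial-type determinant whose entries are analytic in $k$.

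First I would fix a closed rectangle $R$ in the complex plane whose intersection with the real axis is $[a,b]$ and which contains no zero of $f$ other than the real points $k_{1},\dots,k_{M}$; this is possible because $f$ is analytic and not identically zero, so its zeros are isolated (and, by \cref{thm:dlmain}, the real zeros in $[a,b]$ are exactly $k_1,\dots,k_M$). Around each $k_j$ draw a small circle $\gamma_j\subset R$ enclosing $k_j$ and no other zero of $f$. On the compact set $\bigcup_j \gamma_j$ the function $|f|$ attains a positive minimum $\delta > 0$. By the uniform exponential convergence $\sup_{k\in R}|f^{N}(k)-f(k)| \le C e^{-aN}$, for $N$ large enough we have $|f^{N}-f| < \delta \le |f|$ on each $\gamma_j$, so Rouch\'e's theorem gives that $f^{N}$ has exactly as many zeros inside $\gamma_j$ as $f$ does, namely one (using the multiplicity-one hypothesis, i.e. $f'(k_j)\neq 0$, so $f$ has a simple zero at $k_j$). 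A companion argument on $R\setminus\bigcup_j\{|k-k_j|<\epsilon\}$, where $|f|$ is bounded below by some $\delta'>0$, shows $f^{N}$ has no other zeros in $R$; restricting to the real axis yields exactly $M$ real zeros $\omega_1,\dots,\omega_M$ of $f^{N}$ in $[a,b]$ for $N>N_0$.

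For the rate, on the disc $|k-k_j|\le\epsilon$ write $f(k) = (k-k_j)g_j(k)$ with $g_j$ analytic and $|g_j(k_j)| = |f'(k_j)| > 0$; shrinking $\epsilon$ we may assume $|g_j|\ge c_j > 0$ there. Since $f^{N}(\omega_j)=0$ and $\omega_j\to k_j$, we get $|k_j - \omega_j|\,|g_j(\omega_j)| = |f(\omega_j)| = |f(\omega_j) - f^{N}(\omega_j)| \le C e^{-aN}$, hence $|\omega_j - k_j| \le (C/c_j) e^{-aN}$; taking the worst constant over $j=1,\dots,M$ gives the claimed uniform bound. The main obstacle is item (ii): one must verify that the Kress-quadrature error for the Nystr\"om discretization of $-2\cD_k-2\cW$ is exponentially small \emph{uniformly in $k$ over $[a,b]$}, which requires that the smooth part of the kernel and its periodic-trapezoidal/Kress-weighted error depend continuously (indeed analytically) on $k$ with $k$-uniform bounds on the compact interval; this follows from the analytic dependence of the Hankel-function kernel on $k$ away from $k=0$ together with the analyticity of $\Gamma$, exactly as in~\cite{zhao2015robust,kress1991boundary}, but it is the only place where anything beyond soft function theory is needed. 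The rest is a routine transcription of the Laplace argument.
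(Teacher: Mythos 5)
Your proposal is correct and takes essentially the route the paper intends: the paper's own ``proof'' is a one-line deferral to Zhao and Barnett, and your argument --- analyticity of $f$ and $f^N$, uniform exponential convergence of the Kress--Nystr\"{o}m determinant on $[a,b]$, Rouch\'{e}'s theorem with the simple-zero hypothesis, and the local factorization $f(k)=(k-k_j)g_j(k)$ for the rate --- is precisely the cited argument, with the one substantive ingredient (the $k$-uniform spectral accuracy of the quadrature and of the resulting determinant) correctly isolated and deferred exactly as the paper does. The only caveat worth recording is that the kernel of $\cD_k$ is complex-valued even for real $k$, so Rouch\'{e} places exactly one zero of $f^N$ in each small complex disc about $k_j$ but not necessarily on the real axis; the statement ``exactly $M$ zeros on $[a,b]$'' should therefore be read as ``in a complex neighborhood of $[a,b]$,'' and your final estimate $|\omega_j-k_j|\le (C/c_j)e^{-aN}$ is unaffected by this.
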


\begin{proof}
  The proof follows from small modifications of the
  proofs contained in~\cite{zhao2015robust}. 
\end{proof}

\begin{remark}
In practice, using Kress quadrature for large
problems is problematic owing to the global
nature of the quadrature rule.
First, the use of a global rule does not allow
for adaptive refinement at a complicated, local
feature of the boundary.
Second, the integration weight in each entry
of the matrix depends on both the column and the row
in a non-separable way.
As a result, the fast multipole method is not
directly applicable to the resulting matrix
and many fast-direct methods for computing
the determinant lose efficiency (for
instance, the reasoning behind the use of a
{\em proxy surface} \cite{cheng2005compression}
no longer holds).
Over the last two decades, many high-order quadrature
methods which are compatible with
the fast multipole method and fast-direct methods
have been developed.
Our numerical experiments, see \cref{subsec:convannulus},
suggest that the zeros of the 
determinants of linear systems discretized using 
these quadrature methods are also high order approximations
of Dirichlet eigenvalues for the Stokes operator ---
the error is observed to be proportional to the quadrature error
for the eigenfunction $\bt^{-}$ associated with the eigenvalue.
We leave a proof of this to future work.
\end{remark}

\begin{remark}
The same analysis does not carry through for the operator
$\cI - 2 \cD_{k} - 2i\eta \cS_{k} - 2\cW$, 
since $\cS_{k}$ is not a trace class operator.
For brevity, let $\cC_{k} = -2\cD_{k} - 2i\eta \cS_{k} - 2\cW$.
The operator $\cC_{k}$ is 
in $\cJ_{2}(Y)$ where
$\cJ_{2}(Y)$ is the space of Hilbert-Schmidt operators
on $Y$ (the singular values of the operator are square
summable, as opposed to being summable).
Thus the Fredholm determinant of $\cI + \cC_{k}$
is not necessarily finite. 
However, as noted in~\cite{zhao2015robust}, the convergence
result~\cref{thm:mainconvfreddet}
should be true up to a logarithmic factor in the rate
of convergence, since the singular values of the operator
$\cC_{k}$ decay like $\frac{1}{n}$, and the
Fredholm determinant diverges logarithmically. 
In~\cref{subsec:convannulus}, we demonstrate this fact
numerically on the annulus, where the eigenvalues are
analytically known. 
\end{remark}


%
\section{Numerical Results}
\label{sec:numerical}

In this section, we demonstrate the analytical
claims above with numerical examples and
highlight the performance of the BIE approach
with demonstrations on domains of analytical
and practical interest. The software used to
generate the figures is available online
\footnote{\texttt{https://doi.org/10.5281/zenodo.2641296}}.

\subsection{Numerical methods}

First, we describe the numerical tools needed
to compute Stokes eigenvalues in a BIE
framework.

\subsubsection{Discretizing the BIE}

In order to turn the BIEs analyzed
above into discrete linear systems,
we require some standard techniques
from the BIE literature.

Let the boundary be divided into $N_p$
panels.
We parameterize panel $j$ as
$\bx_j(t)$, with $t$ ranging over the
interval $[-1,1]$.
Each component of $\bx_j$ is taken to be
a polynomial interpolant over the
standard 16th-order Legendre nodes on
$[-1,1]$, denoted by $t_n$, so that
the total number of discretization
points is $N=16N_p$.
See \cref{fig:panels} for an example
discretization.

\begin{figure}
\centering
\includegraphics[width=0.5\textwidth]{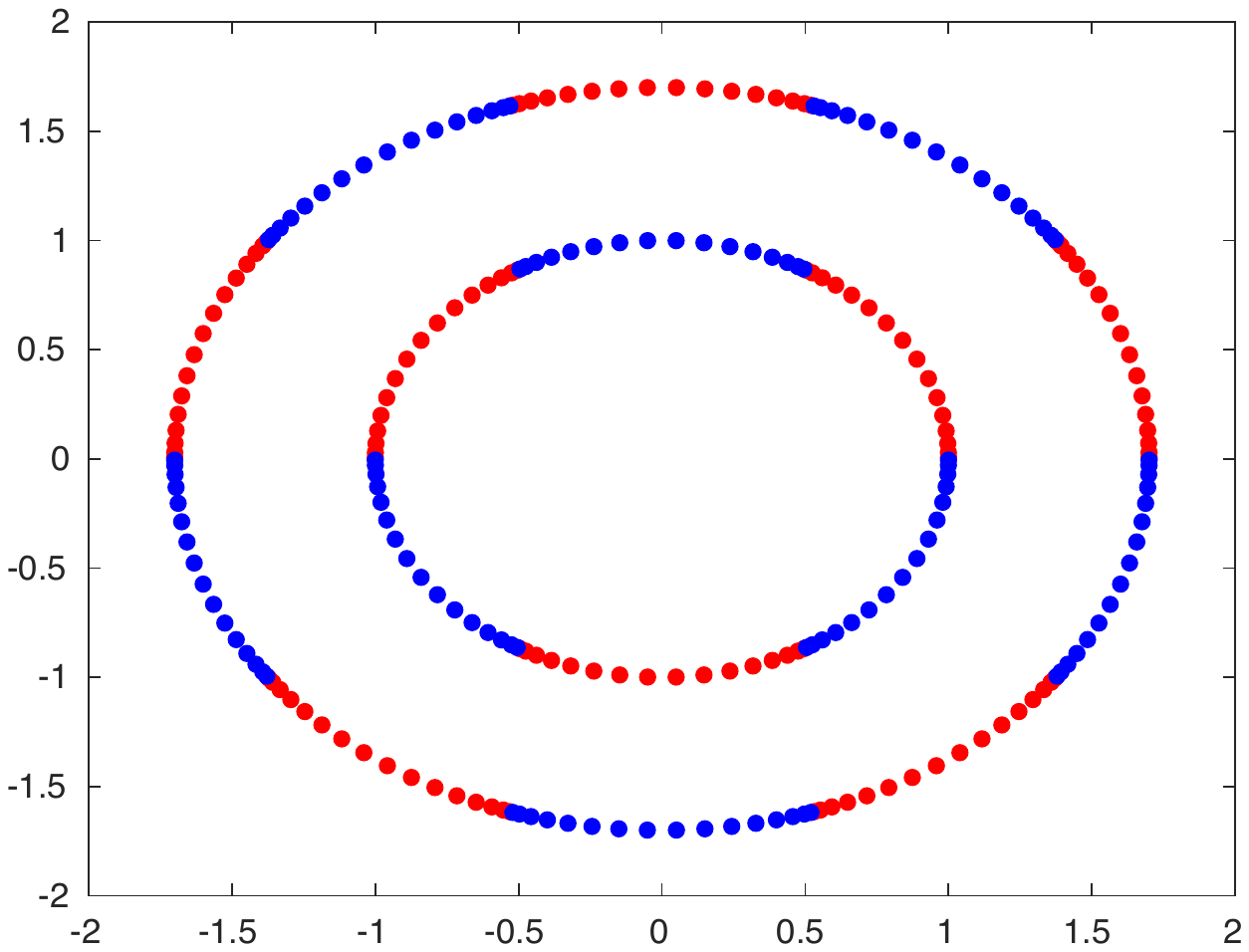}
\caption{Sample discretization of an annulus, where the inner
circle, $r=1$ is discretized using $6$ panels and the outer circle $r=1.7$
is discretized using $10$ panels.}
\label{fig:panels}
\end{figure}
Another important quantity below is the
arc-length density of a panel, which
we denote by $s_j(t) := |\bx'_j(t)|$.
Finally, we denote the set of panels
which are adjacent to panel $j$
by $A(j)$. On a closed curve,
$A(j)$ contains two integers.

The integral kernels of the single and
double layer potentials have weak
singularities of the form $|\xx-\yy|^p\log |\xx-\yy|$
for some $p \in \N_0$, which require special
quadrature rules to achieve high-order accuracy.
In the examples below, we use generalized
Gaussian quadrature (GGQ)~\cite{bremer2010}.
To demonstrate the idea, we consider
evaluating the convolution of a kernel
$K$ with a density $\sigma$
at the boundary node $\xx_j(t_l)$.
GGQ is a Nystr\"{o}m-type discretization ---
the density is approximated 
by its values at the discretization nodes,
which we denote by
$\sigma_{qp} := \sigma(\xx_q(t_p))$.
The basis of a GGQ rule is a set of 
support nodes and weights for the
contribution to the integral from the
``self'' panel (panel $j$) and the adjacent
panels (with index in $A(j)$).

For the self panel, there is a special set
of nodes and weights for each interpolation
point. Denote the nodes and weights
for interpolation point $l$ by $t^{(l)}_{n}$
and $w^{(l)}_{n}$, respectively, with
$1\leq l \leq 16$ and $1\leq n \leq N_s$.
The adjacent panels are handled by a single
set of over-sampled support nodes and weights.
We denote these nodes and weights
by $\tilde{t}_n$ and $\tilde{w}_n$, respectively,
for $1 \leq n \leq N_a$.
For the 20th-order rule we used, $N_s = 16$ and
$N_a = 48$. 
The contribution of other panels is assumed
to be given to high accuracy by the standard
Gauss-Legendre weights, which we denote
by $w_n$.
Adding these contributions together, we obtain the
quadrature

\begin{multline}
  \int_\Gamma K(\xx_i(t_l),\yy) \, \sigma(\yy)
  \, dS(\yy) \approx \\
  \sum_{p=1}^{16} \sum_{n=1}^{N_s}
  w^{(l)}_{n} K(\xx_i(t_l),\xx_i({t}^{(l)}_{n}))
  s_i({t}^{(l)}_{n}) B^{(l)}_{np} \sigma_{ip} \quad \textrm{(self)}
  \\
  + \sum_{q\in A(i)} \sum_{p=1}^{16} \sum_{n=1}^{N_a}
  \tilde{w}_jK(\xx_i(t_l),\xx_q(\tilde{t}_n)) s_q(\tilde{t}_n)
  C_{np} \sigma_{qp}
  \quad \textrm{(adjacent)} \\
  + \sum_{q\neq i, q\not\in A(i)} \sum_{p=1}^{16}
  w_p K(\xx_i(t_l),\xx_q(t_p)) s_q(t_p)
  \sigma_{qp} \quad \textrm{(far)} \nonumber \; ,
\end{multline}
where $\bB^{(l)}$ and $\bC$ are interpolation
matrices from the standard Legendre nodes
to the self and adjacent panel support nodes,
respectively. Observe that the quadrature is
linear in $\sigma_{qp}$. In practice, we pre-compute
and store the self and adjacent matrix entries for each
interpolation point, which is a parallelizable
$O(N)$ calculation. The ``far'' interactions
are computed on-the-fly.

\begin{remark}
  \label{rmk:levelrestrict}
  We ensure that ``far'' interactions
  are handled to high precision by requiring that
  no two adjacent panels differ in length
  by more than a factor of 2. On a domain which does
  not nearly self-intersect this
  guarantees that no ``far'' interactions occur
  which are much closer than 1/2 of a panel away
  (assuming panels are relatively flat).
  Because the location of the singularity is
  bounded away from the panel and the smooth
  rule is of high order, we obtain a quadrature
  rule with sufficient precision.

  The overall order of accuracy of the GGQ we use
  is 20th-order, up to the precision of the ``far''
  interactions.
\end{remark}

\subsubsection{Fast determinant method}

Once the discretization is set, we can form
a compressed representation of the system matrix
using recursive skeletonization~\cite{ho2012fast}.
We use the implementation of this procedure
included in the fast linear algebra in
MATLAB (\texttt{FLAM}) package
\cite{hoFLAM_1253582}.
At low-to-medium frequencies, the scaling
of the recursive skeletonization algorithm
is $O(N\log N)$ in operation count and
storage and, by using a generalization
of the Sylvester determinant formula,
allows for a fast determinant
calculation in $O(N\log N)$ time as a
follow-up step.
At higher-frequencies,
the recursive skeletonization procedure,
which is based on the assumption that off-diagonal
blocks of the matrix are of low rank,
breaks down and does not offer a speed advantage.
These algorithms take a precision parameter
$\epsflam$ which determines the
accuracy to which any sub-blocks of the matrix
should be compressed. In all experiments,
we set $\epsflam = 10^{-14}$.

The compressed representation also allows
for fast applications of the system matrix,
its transpose, the inverse of the system
matrix, and the inverse transpose to
vectors.
In particular, this allows us to estimate the
smallest singular values by performing
randomized subspace iteration, see
\cite[Algorithm 4.4]{halko2011finding},
on the inverse operator.
Below, we use the smallest singular value
as a measure of the quality of the
eigenvalues found by approximating the
roots of the determinant.
We also evaluate the second smallest singular
value if the root finding procedure suggests a
possible double root. 

\subsubsection{Interpolation and root-finding}

To estimate the eigenvalues, we fit a Chebyshev
interpolant to the discretized determinant as a
function of $k$ on intervals.
This is done adaptively so that the Chebyshev
coefficients of the determinant have decayed
to the point that the ratio of the last
coefficient to the largest coefficient is below
some threshold.
In all experiments, we set this threshold
as $\epscheb = 10^{-13}$.
We perform this fit using the \texttt{chebfun}
utility in the package of the same name
\cite{driscoll2014chebfun}
so that we can make use
of the \texttt{roots} utility to approximate
the roots of the determinant.

The \texttt{roots} utility returns the roots
of the polynomial in the complex plane, with
some minimal internal processing to remove
spurious roots.
Because our numerical determinant evaluation
is somewhat noisy and we fit the function up to
precision $\epscheb$, we perform some
further post-processing to eliminate remaining
spurious roots.
Let $k^{(l)}_\cheb$ denote the roots of the interpolants.
We ignore any of the returned roots with $|\imag(k^{(l)}_\cheb)|
> \sqrt{\epscheb}$, as these are too far from real-valued
to be non-spurious.
For the remaining roots, we consider the
properties of $\real(k^{(l)}_\cheb)$.
We inspect any pairs of roots $(k^{(p)}_\cheb,k^{(q)}_\cheb)$
for which $|\real(k^{(p)}_\cheb-k^{(q)}_\cheb)| < \sqrt{\epscheb}$,
as these are possibly spurious double roots.
For these pairs, we compare the right singular
vector of the appropriate BIE operator corresponding
to the smallest singular value for each of $\real(k^{(p)}_\cheb)$
and $\real(k^{(q)}_\cheb)$, which we denote by
$\bv_p$ and $\bv_q$.
If $\|\bv_p - \bv_q \bv_q^* \bv_p \| < 10^{-5}$, then
we consider the pair to be spurious.
For these near double roots, we also check
that there is no two dimensional null-space
corresponding to the root by estimating the
second smallest singular value of the BIE operator.
If this is larger than $10^{-5}$, then we declare
it to be a simple root.

We can obtain an a posteriori estimate of the
error in a computed root as follows.
Let $f$ denote an analytic function,
$P$ be the polynomial interpolant
of that function over some interval,
$\delta f = f-P$ be the difference,
and $k_\cheb$ denote a computed root
of $P$ which is simple (i.e. assume
that $P'(k_\cheb) \ne 0$).
The algorithm used by \texttt{chebfun}
to approximate the roots of $P$ is
backward stable~\cite{noferini2017chebyshev}.
Therefore the error in the roots will be
small relative to the error of the fit and
we set $P(k_\cheb) = 0$ below.
Suppose that
$f(k_\cheb + \delta k) = 0$ for some
small $\delta k$. Then

\begin{align*}
  0 &= f(k_\cheb + \delta k) \\
  0 &= P(k_\cheb + \delta k) + \delta f(k_\cheb + \delta k) \\
  \delta k &= -\frac{\delta f(k_\cheb + \delta k)}{P'(k_\cheb)} + O(\delta k^2) .
\end{align*}
In practice, we can obtain an approximate upper
bound for $|\delta f(k_\cheb+\delta k)|$
as $\epscheb \|P\|_\infty$ so that $\epscheb \|P\|_\infty/
|P'(k_\cheb)|$ provides an approximate upper bound
for the error in the root.

\subsection{Eigenvalues of an annulus}
We test our numerical machinery and validate our analytical 
and numerical claims
by comparing the results to the true eigenvalues on the annulus
which are known analytically (see~\cref{sec:annul_dir_exact}).
In all of the examples below, we work on the annulus $r_{1}<r<r_{2}$
with $r_{1} = 1$ and $r_{2} = 1.7$.
If the inner boundary is discretized using $N_{1}$ panels, 
then the outer boundary is discretized using $N_{2} = 
\lceil r_{2}/r_{1} N_{1} \rceil +1$ panels to ensure that the
panels are approximately the same length on both the boundaries. 
The total number of discretization points 
is then given by $N = 16(N_{1} + N_{2})$.
Let $D^{N}_{k}$ denote the linear system corresponding
to the Nystr\"{o}m discretization of
$-2\cD_{k} -2\cW$ using generalized Gaussian quadrature, 
and let $C^{N}_{k}$ denote the linear system corresponding
to the Nystr\"{o}m discretization of 
$-2\cD_{k} - 2i\cS_{k} -2\cW$.
Let $f_{D}^{N}(k) = \text{det}(I+D^{N}_{k})$, and 
$f_{C}^{N}(k) = \text{det}(I+C^{N}_{k})$.

\subsubsection{Convergence study}
\label{subsec:convannulus}
We demonstrate that 
for sufficiently large $N$, if $k_{0}$ is a Dirichlet 
eigenvalue of the annulus, 
then $f_{D}^{N}(k_{D}) = 0$ and $f_{C}^{N}(k_{C}) = 0$
where $|k_{D} - k_{0}| \lesssim N^{-20}$, 
and $|k_{C} - k_{0}| \lesssim N^{-20}$.
Recall that the GGQ we use has an expected order
of convergence of $N^{-20}$ for evaluating convolutions
with such integral kernels,
so that the error of the roots is observed to have the
same order as the quadrature rule.
In~\cref{fig:conv}, we show this result for $k_{0} = 13.48025717955055$
and plot the errors $|k_{D}-k_{0}|$ and $|k_{C}-k_{0}|$ as a function
of $N$. 

\begin{figure}
  \centering
  \includegraphics[width=0.98\textwidth]{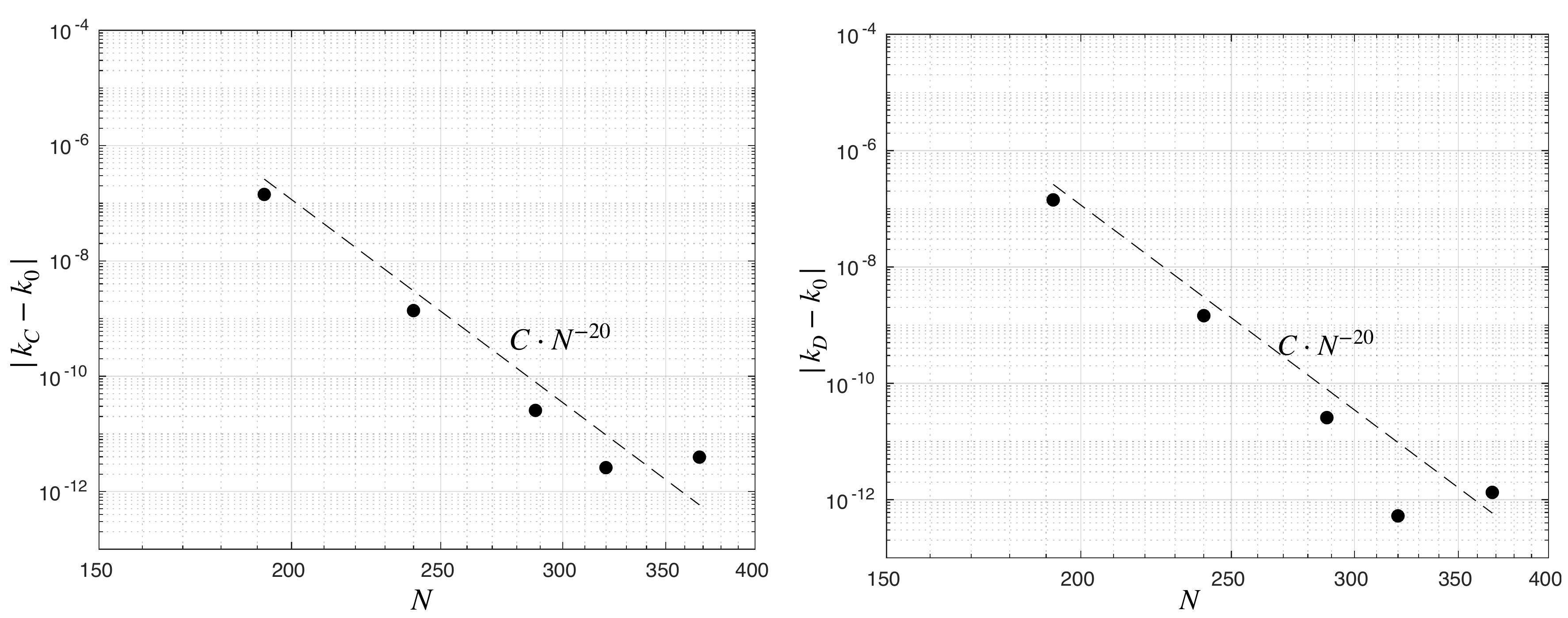}
  \caption{Convergence studies for the Dirichlet eigenvalues
  computed using the integral equations 
  $\cI - 2\cD_{k} - 2\cW$ (left) and
  $\cI - 2\cD_{k} - 2i \cS_{k} - 2\cW$ (right).}
  \label{fig:conv}
\end{figure}

\subsubsection{Spurious eigenvalues}
\label{subsec:spurannulus}
As noted in~\cref{subsec:dlanalysis},
if $k_{0}$ is a Neumann eigenvalue corresponding to the interior
inclusion, which in our case is the disk $r\leq r_{1}$, then
$f_{D}^{N}(k_{D}) = 0$ with $|k_{D}-k_{0}| = O(\varepsilon)$, even though $k_{0}$ 
is not a Dirichlet eigenvalue of the annulus, i.e.
the integral equation $-\cI - 2\cD_{k} -2\cW$ has a spurious eigenvalue.
In~\cref{fig:spur}, we demonstrate this result and also show that 
$f_{C}^{N}(k_{0}) \neq 0$, i.e., the combined field representation is
robust and invertible at all values of $k$ which are not the Dirichlet
eigenvalues of the annulus.

\begin{figure}
\centering
\includegraphics[width=0.98\linewidth]{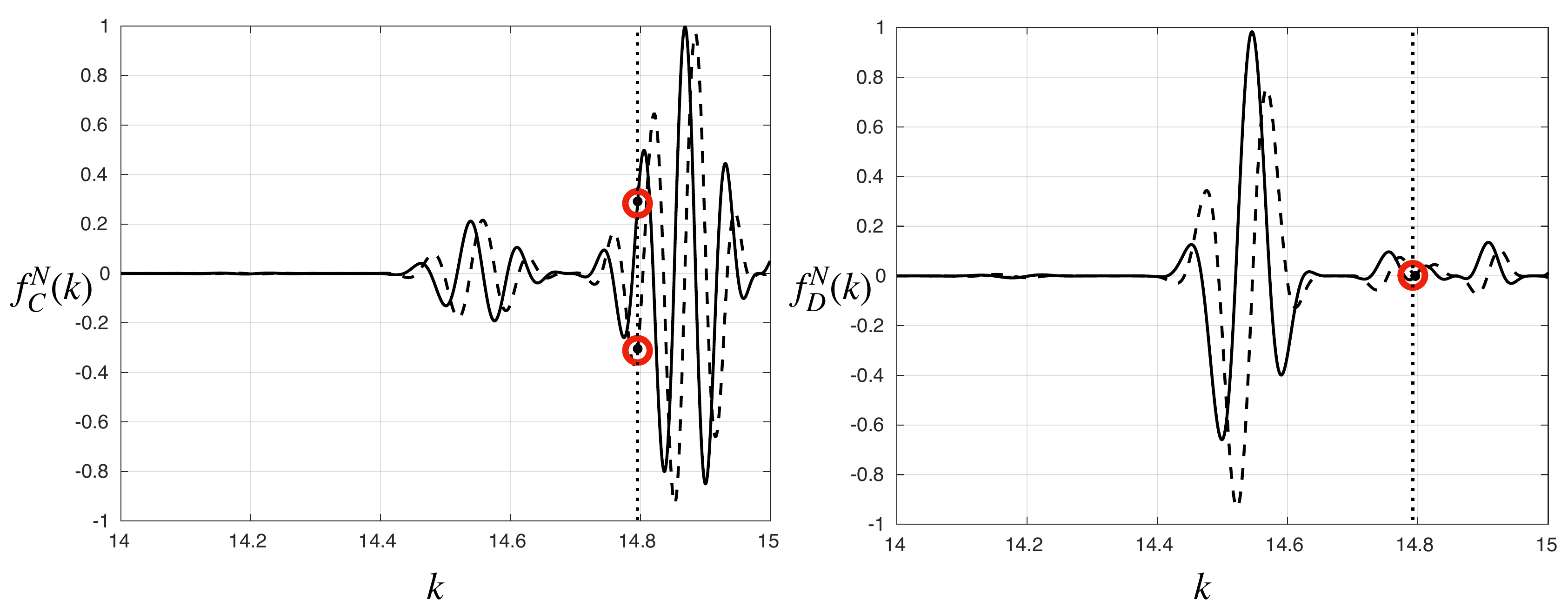}
\caption{The values of the discretized determinants $f_{C}^{N}(k)$ (left)
and $f_{D}^{N}(k)$ on the interval $k=[13,14]$ with $N=368$. The vertical
dotted line denotes the spurious eigenvalue $k_{0} = 14.79595178235126$ 
and $f_{D}^{N}(k_{D}) = 0$ with $|k_{D}-k_{0}|=6.8\times 10^{-12}$.
On the other hand $|f_{C}^{N}(k_{0})|=0.42$, and thus $\cI-2\cD_{k}-2i\cS_{k} 
-2\cW$ has no 
spurious eigenvalue in the neighborhood of $k=k_{0}$.}
\label{fig:spur}
\end{figure}

\subsubsection{Speed}
\label{subsec:speed}
In this section, we demonstrate the $O(N\log{N})$ scaling of evaluating
$f^{N}_{C}(k)$ as long as $N$ is large enough to resolve the interactions
at the Helmholtz parameter $k$. 
When $N$ is smaller than that, we observe a worse scaling since the
assumption that far-interactions are low-rank is no longer valid at
the tolerance of FLAM.
We plot the timing results corresponding to three different values of 
$k$ in~\cref{fig:speed}. The times are as recorded for a laptop with
16Gb of RAM and an Intel Core i7-6600U CPU at 2.60GHz with 4 cores.
\begin{figure}
\centering
\includegraphics[width=0.5\textwidth]{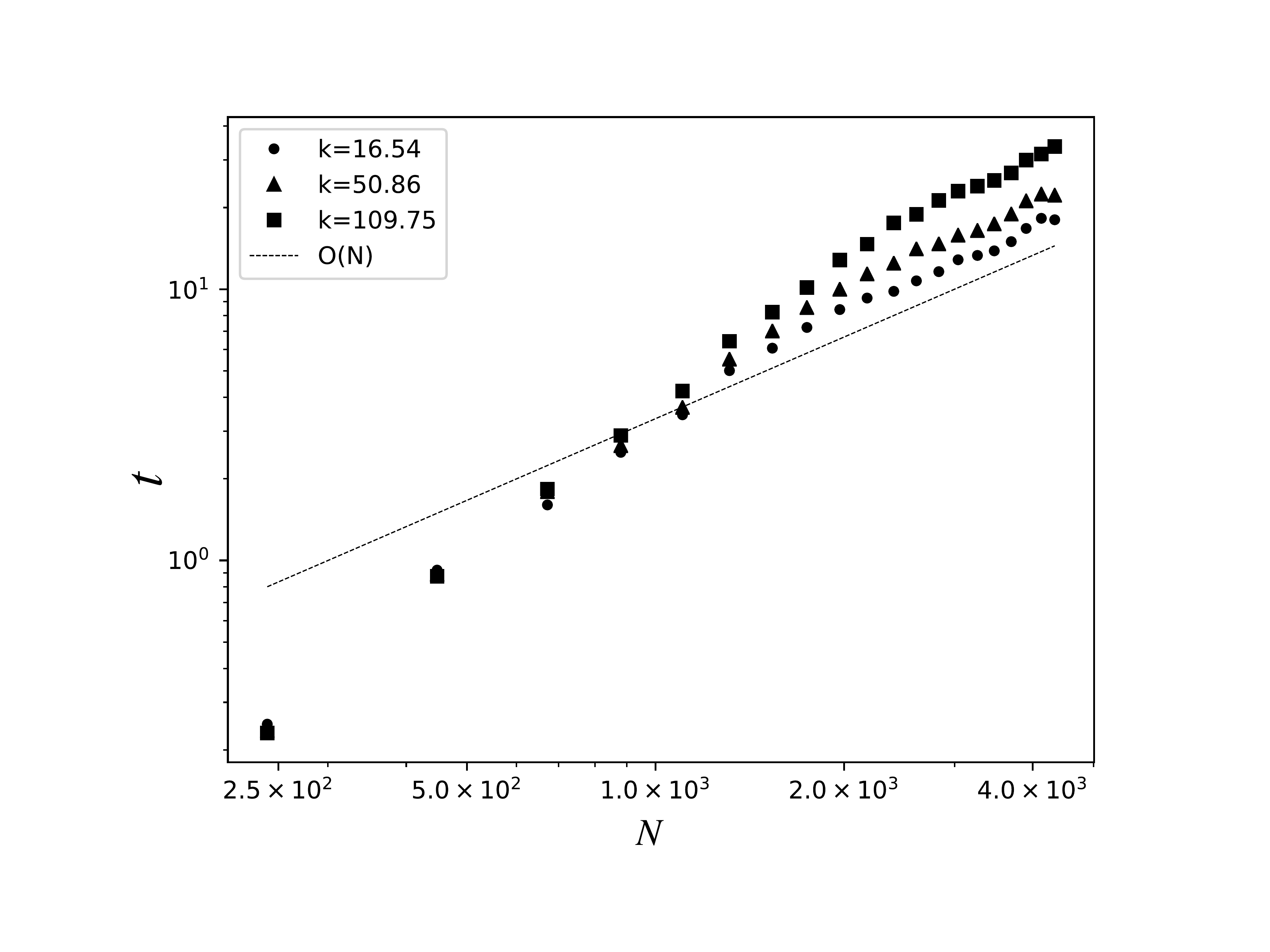}
\caption{Time taken $(t)$ in seconds to evaluate the determinant $f_{C}^{N}(k)$ as
a function of $N$ for three different values of $k$.}
\label{fig:speed}
\end{figure}

\subsection{Eigenvalues of a barbell-shaped domain}
\label{subsec:barbell}

\begin{figure}
  \centering
  \begin{subfigure}[t]{0.4\textwidth}
    \centering
    \includegraphics[width=\textwidth]{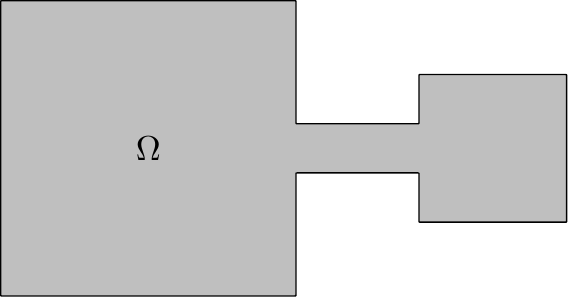}
    \caption{A barbell-shaped domain.}
    \label{subfig:barbell_bdry}
  \end{subfigure}
  ~
  \begin{subfigure}[t]{0.4\textwidth}
    \centering
    \includegraphics[width=\textwidth]{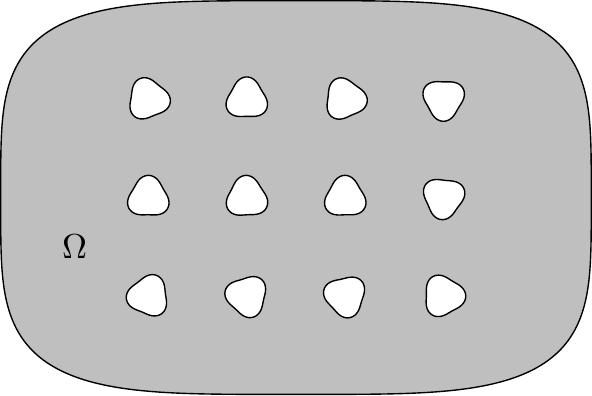}
    \caption{A domain with several inclusions.}
    \label{subfig:many_inclusions_bdry}
  \end{subfigure}
  \caption{Computational domains.}
\end{figure}

\begin{figure}
  \centering
  \includegraphics[width=\textwidth]{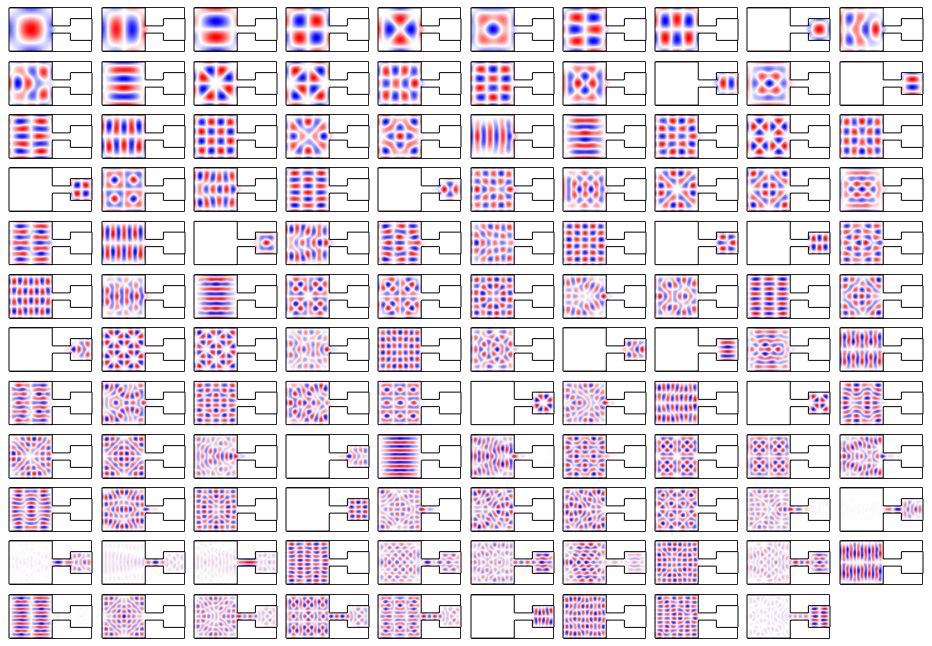}
  \caption{Vorticity plots of the first 119 eigenfunctions
    of the barbell-shaped domain.}
  \label{fig:barbell_gallery}
\end{figure}

\begin{figure}
  \centering
  \begin{subfigure}[t]{0.4\textwidth}
    \centering
    \includegraphics[width=\textwidth]{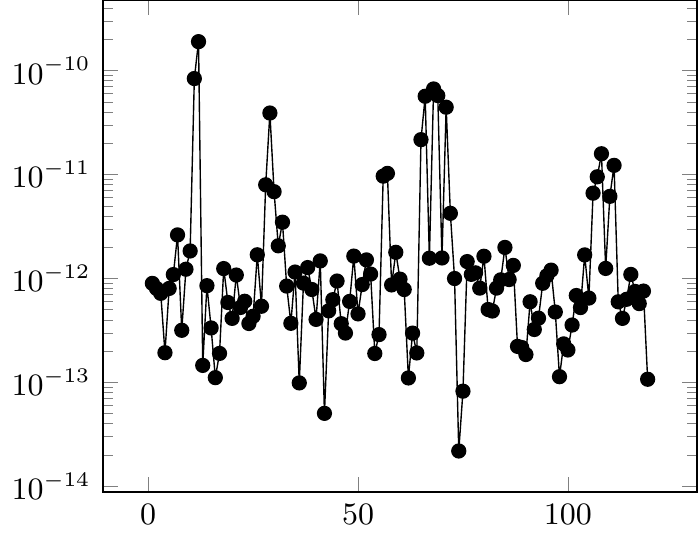}
    \caption{Smallest singular value of the BIE operator.}
    \label{subfig:barbell_sings}
  \end{subfigure}
  ~
  \begin{subfigure}[t]{0.4\textwidth}
    \centering
    \includegraphics[width=\textwidth]{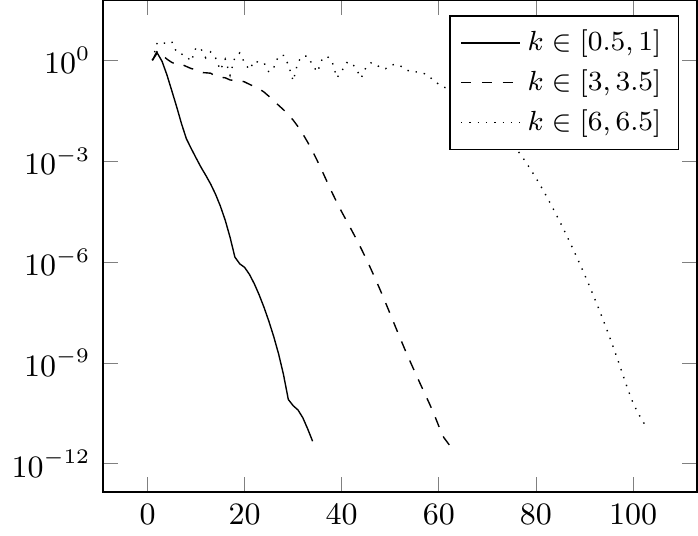}
    \caption{Normalized Chebyshev coefficients of the
      determinant on 3 intervals in $k$.}
    \label{subfig:barbell_coeffs}
  \end{subfigure}
  \caption{Diagnostics for the first 119 barbell eigenvalues.}
  \label{fig:barbell_diagnostics}
\end{figure}

We consider the barbell-shaped domain in \cref{subfig:barbell_bdry}.
This domain is the union of a square of side-length 6,
a square of side-length 3, and a ``bridge'' connecting
them of height 1 and width 5/2.
For the sake of simplicity, we round the corners of the domain
to obtain a smooth object.
Applying the approach described in~\cite{epstein2016smoothed},
the corners of the domain are rounded by convolving with
the Gaussian kernel
\begin{equation}
  \nonumber
\phi(x) = \frac{1}{\sqrt{2\pi h}} e^{-x^2/(2 h^2)} \, ,
\end{equation}
with $h\approx 0.06$. This leaves the domain unperturbed
to high precision outside of a radius of $0.1$ around
each corner.
The eigenfunctions of such a domain display the well-known
localization property~\cite{trefethen2006computed}:
many of the eigenfunctions are approximately supported
within one of the squares.
We compute these eigenfunctions corresponding to
eigenvalues $k^2$ with $k$ in the range
$0.5 \leq k \leq 6.5$.

The panels are divided adaptively so that the smallest
panels in the rounded corners are smaller than $10^{-2}$,
which keeps the panels relatively flat.
This results in $N_p = 412$ after enforcing the
level-restriction property described in
\cref{rmk:levelrestrict}
and enforcing that no panel is larger than
one wavelength for the largest $k$
(here $\lambda=2\pi/6.5$).

As this is a simply-connected domain,
the eigenvalues are estimated by finding the values
$k$ for which $\cI-2\cDk-2\cW$ is non-invertible.
Let $f^N(k) = \det (\cI^N-2\cDk^N-2\cW^N)$.
To find the roots of $f^N(k)$, we fit a \texttt{chebfun}
representation of $f^N(k)$ on each of the intervals
$[j/2,(j+1)/2]$ for $j = 1,\ldots,12$.
We plot the absolute value of the Chebyshev coefficients
(normalized by the absolute value of the first coefficient)
of $f^N(k)$ on the intervals $[0.5,1.0]$, $[3.0,3.5]$,
and $[6.0,6.5]$ in \cref{subfig:barbell_coeffs}.
As expected, the coefficients decay exponentially
to zero, with more terms required at higher
frequencies.

We compute the roots of these Chebyshev interpolants
and apply the post-processing described above.
There were 135 total roots: 3 were removed because
the imaginary part was too large and 13 pairs were found
with values within $\sqrt{\epscheb}$ of each other.
For these 13 pairs, none represented two distinct
eigenvalues or a double root.
This leaves 119 roots in the range $0\leq k \leq 6.5$.
We plot the smallest singular value of
$\cI^N-\cDk^N-2\cW^N$ for each of these roots in
\cref{subfig:barbell_sings}
and plot the vorticity of the eigenfunctions
in \cref{fig:barbell_gallery}.
The singular values suggest that the quality of the
eigenvalues is good.
From the plots, we see that localization occurs
until about the 100th eigenvalue.

%


\subsection{Eigenvalues of a domain with several inclusions}

\begin{figure}
  \centering
  \includegraphics[width=\textwidth]{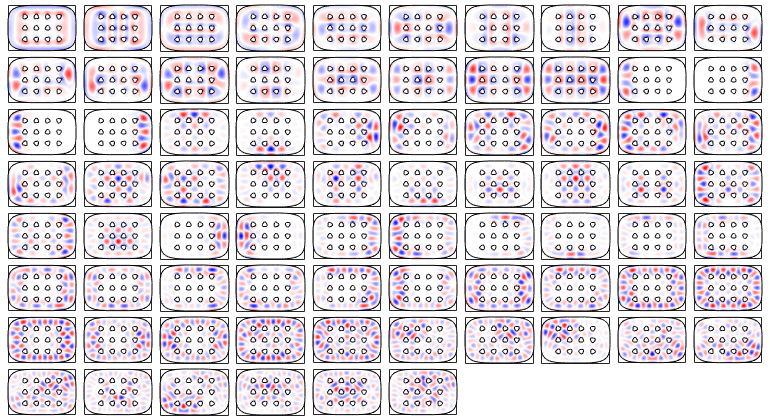}
  \caption{Vorticity plots of the eigenfunctions corresponding
  to the first 76 eigenvalues of a domain with several inclusions.}
  \label{fig:many_inclusions_gallery}
\end{figure}

\begin{figure}
  \centering
  \begin{subfigure}[t]{0.4\textwidth}
    \centering
    \includegraphics[width=\textwidth]{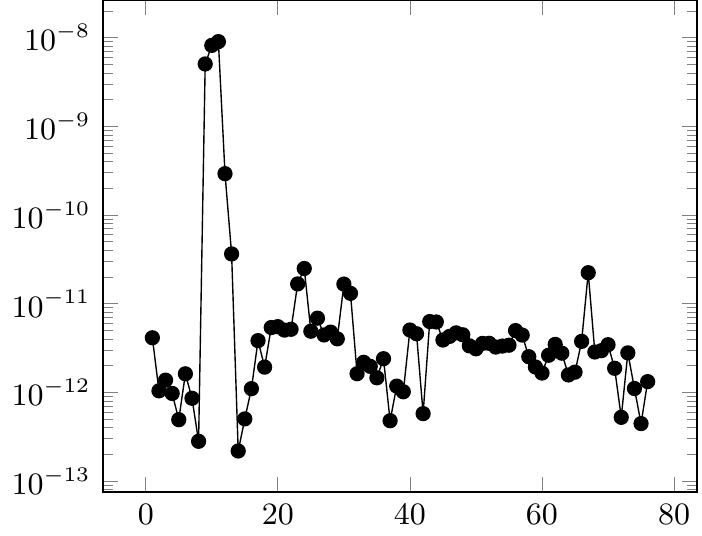}
    \caption{Smallest singular value of the BIE operator
      corresponding to the first 76 eigenvalues of a
      domain with several inclusions.}
    \label{subfig:many_inclusions_sings}
  \end{subfigure}
  ~
  \begin{subfigure}[t]{0.4\textwidth}
    \centering
    \includegraphics[width=\textwidth]{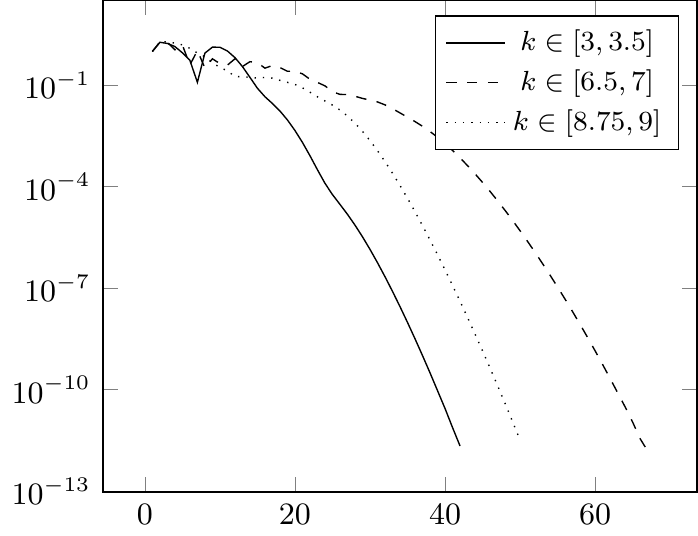}
    \caption{Normalized Chebyshev coefficients of $f^N(k)$ on
      3 different intervals in $k$.}
    \label{subfig:many_inclusions_coeffs}
  \end{subfigure}
  \caption{Diagnostics for the eigenvalues of a domain
    with several inclusions.}
  \label{fig:many_inclusions_diagnostics}
\end{figure}

We now consider the multiply-connected domain in
\cref{subfig:many_inclusions_bdry}.
The domain is defined by a smooth rectangular region
of width 3 and height 2,
with an array of randomly rotated ``starfish'' shapes
removed. 
Such shapes are of interest in
materials design, see, for instance, \cite{overvelde2012compaction}.
We compute the eigenfunctions corresponding to
eigenvalues $k^2$ with $k$ in the range
$3 \leq k \leq 9$ (this range includes the smallest
eigenvalue).

For this smooth shape, ensuring that no panel is
larger than one wavelength for the largest $k$
(here $\lambda=2\pi/9$) is sufficient to resolve
the object to high precision.
After enforcing the
level-restriction property described in
\cref{rmk:levelrestrict}, we end up with
$N_p = 224$.

As this is a multiply-connected domain,
the eigenvalues are estimated by finding the values
$k$ for which $\cI-2\cDk-2i\cSk-2\cW$ is non-invertible.
Let $f^N(k) = \det (\cI^N-2\cDk^N-2i\cSk^N-2\cW^N)$.
To find the roots of $f^N(k)$, we fit a \texttt{chebfun}
representation of $f^N(k)$ on each of the intervals
$[j/2,(j+1)/2]$ for $j = 6,\ldots,13$ and the intervals
$[j/4,(j+1)/4]$ for $j = 28,\ldots,35$.
It should be noted that, due to the relative sizes
of the domains,
this represents a lower frequency problem than
that for the barbell when measured in the number
of wavelengths across the object.
Thus, the use of a finer grid in frequency results from
the difficulty in resolving the Fredholm determinant
for this problem, which has a larger dynamical range
than that for the barbell.
We plot the absolute value of the Chebyshev coefficients
of $f^N(k)$ on the intervals $[3,3.5]$, $[6.5,7]$,
and $[8.75,9]$ in \cref{subfig:barbell_coeffs}.
As expected, the coefficients decay exponentially
to zero, with more terms required at higher
frequencies (note that the interval $[8.75,9]$ is
smaller than the others).

We compute the roots of these Chebyshev interpolants
and apply the post-processing described above.
There were 103 total roots: 21 were removed because
the imaginary part was too large and 6 pairs were found
with values within $\sqrt{\epscheb}$ of each other.
For these 6 pairs, none represented two distinct
eigenvalues or a double root.
This leaves 76 roots in the range $0\leq k \leq 9$.
We plot the smallest singular value of
$\cI^N-\cDk^N-2i\cSk^N-2\cW^N$ for each of these roots in
\cref{subfig:many_inclusions_sings}
and plot the vorticity of the eigenfunctions
in \cref{fig:many_inclusions_gallery}.
In the vorticity plots, we observe a different type of
localization property than that seen in the barbell,
with many of the eigenfunctions
approximately supported in a small, connected subset
of the domain. This is consistent with other studies
\cite{filoche2009strong,lindsay2018boundary}.

\begin{figure}
  \centering
  \begin{subfigure}[t]{0.4\textwidth}
    \centering
    \includegraphics[width=\textwidth]{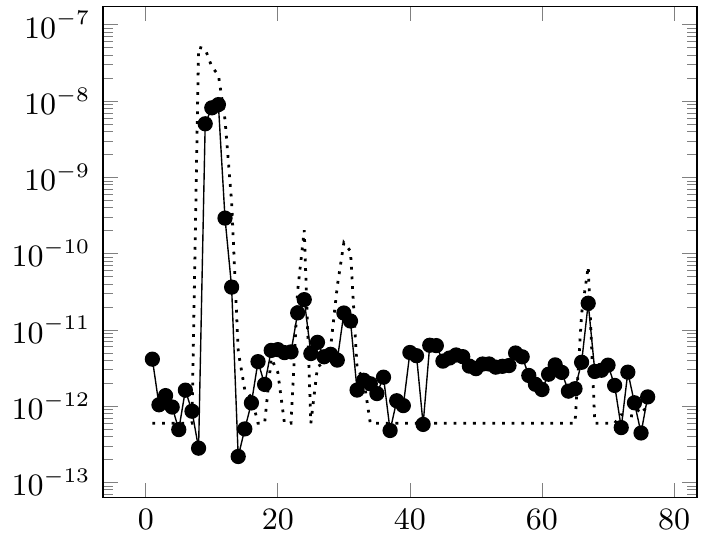}
    \caption{Smallest singular value of the BIE operator
      for the computed roots on the original intervals (\textbullet)
      and an estimate of the error (dotted).}
    \label{subfig:many_inclusions_sings_west}
  \end{subfigure}
  ~
  \begin{subfigure}[t]{0.4\textwidth}
    \centering
    \includegraphics[width=\textwidth]{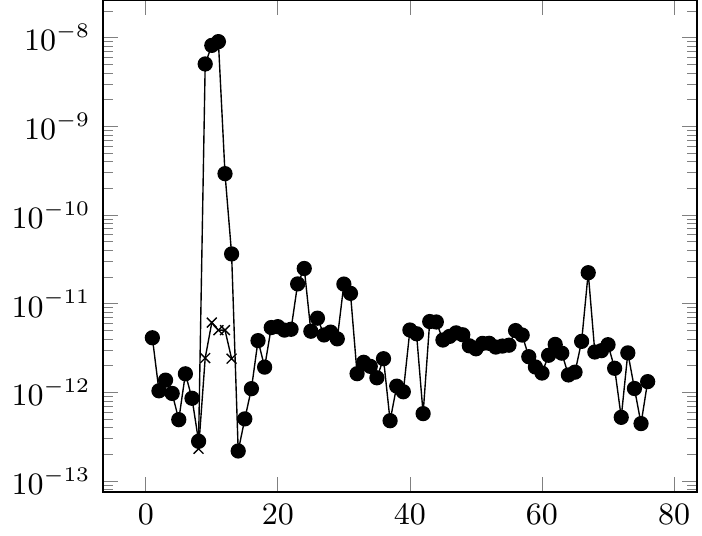}
    \caption{Smallest singular value of the BIE operator
      for the computed roots on the original intervals (\textbullet)
      and the values obtained on the refined interval ($\times$).}
    \label{subfig:many_inclusions_sings_ref}
  \end{subfigure}
  \caption{Further diagnostics for the eigenvalues of a domain
    with several inclusions.}
  \label{fig:many_inclusions_diagnostics_2}
\end{figure}

The singular values suggest that the quality of the
eigenvalues is good, with a few outliers.
To explain these outliers, we consider two quantities
which affect the singular value at a computed root.
As described above, we can approximate the
error in the computed root at $k_\cheb$ by
$\epscheb \|P\|_\infty/|P'(k_\cheb)|$,
where $P$ is the interpolating polynomial.
The singular value estimate itself is affected by
the error incurred in applying the inverse of the
compressed BIE matrix, which can be hard to quantify
\cite{ho2012fast}.
We approximate this error by $O(\sqrt{N})\epsflam$
and assume this is the order of the error in
the singular value estimate.
We plot the maximum of these two estimates
along with the computed singular values in
\cref{subfig:many_inclusions_sings_west}.
There is a reasonably good correlation between
the maximum of the error estimates and the
observed smallest singular value for the BIE,
especially for larger errors.

The worst outliers are from the left half
of the interval $[4.5,5]$.
Because the determinant is much larger on the
right half than the left half of $[4.5,5]$,
we can improve the estimate for the error
in the roots by subdividing the interval. 
We plot the smallest singular value of the
BIE for the roots obtained by fitting a polynomial
on $[4.5,4.75]$ in \cref{subfig:many_inclusions_sings_ref};
the roots on the refined interval
are of significantly higher quality.

\begin{remark}
  The above experience suggests that
  the ratio $\|P\|_\infty/|P'(k_\cheb)|$
  is a useful diagnostic for performing
  automated eigenvalue estimation.
  Note that at a multiple root, this ratio
  will be more difficult to bound.
\end{remark}
%

%


%
\section{Conclusion}
\label{sec:conclusion}

In the preceding, we have demonstrated a
BIE framework for computing the eigenvalues
of the Stokes operator in the plane which
is robust and scalable.
To justify the
approach, we developed a uniqueness theory
for the oscillatory Stokes equations in
exterior domains in analogy with the
discussion of the Helmholtz equation in
\cite{colton1983integral}.
This lead to the primary theoretical
results of the paper which show that the
BIEs resulting from double layer and
combined-field representations of the
velocity field are 
not invertible precisely when $k^2$ is
an eigenvalue on simply connected
and multiply connected domains, respectively.
As in \cite{zhao2015robust}, the costliness
of performing the nonlinear minimization
associated with computing these eigenvalues
can be alleviated by computing instead the
approximate zeros of the discrete Fredholm
determinant.

The results of this paper can be
extended in a number of ways.
The theory
extends directly to three dimensions, where
computational efficiency and numerical
implementation will be the primary concern.
In the numerical examples above, we
consider only domains with differentiable
boundaries for simplicity.
When using the eigenfunctions as a trial
basis for simulating the Navier--Stokes
equations~\cite{batcho1994generalized},
it is necessary to handle domains
with corners because the domains of interest
arise from domain decomposition, e.g.
dividing a larger domain into 
quadrilaterals.
Fortunately, there has been recent progress
toward efficient discretization of the
layer potentials of elliptic operators
on domains with corners
\cite{helsing2008corner,serkh2016solution,rachh2017solution,helsing2018integral}
which makes the solution of such problems
tractable.
Of course, the theoretical considerations
are different for such domains.
Computing the Stokes eigenvalues of regions
with corners is the topic of ongoing
research.

As noted in the introduction, the
eigenvalues of the Stokes operator are
equivalent to the so-called
``buckling'' eigenvalues of the
biharmonic operator on simply connected
domains~\cite{kelliher2009eigenvalues}.
This can be seen through the stream function
formulation  of the oscillatory Stokes
equation, i.e. setting $\bu = \nabla^\perp \psi$
where $\psi$ now satisfies

\begin{equation*}
  -\Delta^2 \psi = k^2 \Delta \psi \quad \textrm{in} \quad \Omega\; .
\end{equation*}
Note that the buckling problem enforces the
clamped, or first Dirichlet, boundary
condition on $\psi$

\begin{equation*}
  \psi = \partial_\nu \psi = 0 \; \quad \textrm{on} \quad \Gamma.
\end{equation*}
On a multiply connected domain, there are
Stokes eigenfunctions which do not have a corresponding
clamped stream function, so that the
buckling eigenvalues are a subset of the Stokes
eigenvalues.
By adapting the approach of \cite{rachh2017integral},
a suitable layer potential representation of the
buckling problem can be derived based on the
oscillatory Stokes layer potentials.
This is the subject of a follow-up paper which is in
preparation.

There are some interesting questions to answer
on the use of Fredholm determinants in numerical
calculations.
As observed in \cite{zhao2015robust}, the
combined-field representation causes some
difficulty in that the Fredholm determinant
is not defined when the single layer, which
is not trace-class, is included.
Zhao and Barnett~\cite{zhao2015robust}
suggest looking into
representations of the form $\cI-2\cD
-2i\eta\cS^2-2\cW$,
which would have a well-defined Fredholm
determinant.
The relative performance of
such an approach should be explored.
Further, as discussed above, the
convergence of the determinant of
integral equations discretized with
panel-corrected schemes (as described in
\cref{sec:numerical}) is yet to be proved.
When addressing high frequency problems,
the fast-direct solver used to evaluate
determinants in this paper will no longer
have near-linear scaling~\cite{ho2012fast}.
The design of fast-direct solvers in this
regime is the subject of ongoing research, and
to the best of our knowledge, fast determinant
computation at high frequency is an open
question.
Finally, it is worth exploring alternatives
to the Fredholm determinant which perform well
for layer potentials that are not trace class
on the boundary or for problems at higher
frequencies.


%
\section{Acknowledgements}
The authors would like to thank Alex Barnett, Leslie Greengard, and Shidong Jiang
for many useful discussions.

T. Askham was supported by the Air Force Office of Scientific Research 
under Grant FA9550-17-1-0329.


%

\appendix

\section{Dirichlet eigenvalues and eigenfunctions on the annulus \label{sec:annul_dir_exact}}
In this section, we compute some of the Dirichlet eigenvalues
corresponding to a subset of the radially symmetric eigenfunctions
on the annulus. 
In polar coordinates $(r,\theta)$, consider the annulus defined by 
$R_{1}<r<R_{2}$. 
Suppose that $\bu$ is of the form 
\begin{equation}
\bu = \nabla^{\perp}\left( \alpha H_{0}(kr) + \beta J_{0}(kr) \right) \, ,
\label{eq:annulus_direig}
\end{equation}
and $p=0$.

Clearly, this pair satisfies the osciallatory Stokes equation
with parameter $k$, since $J_{0}(kr)$ and $H_{0}(kr)$ 
satisfy the Helmholtz equation on the annulus.

Let $\hat{r},\hat{\theta}$ denote the unit vectors in polar coordinates.
A simple calculation shows that 
\begin{equation}
\begin{aligned}
u_{r} &= \bu \cdot \hat{r}  = 0 \, \\
u_{\theta} &= \bu \cdot \hat{\theta} = k(\alpha  H_{0}'(kr) + \beta J_{0}'(kr)) \, .
\end{aligned}
\end{equation}

This in particular implies that on $r=R_{1}$,
$u_{\theta}$ takes on the constant value,
$k(\alpha H_{0}'(kR_{1}) + \beta J_{0}'(kR_{1}))$.
Similarly, on $r=R_{2}$, 
$u_{\theta}$ takes on the constant value,
$k(\alpha H_{0}'(kR_{2}) + \beta J_{0}'(kR_{2}))$.

Thus, if $k$ satisfies, 
\begin{equation}
H_{0}'(kR_{1}) J_{0}'(kR_{2}) - H_{0}'(kR_{2})J_{0}'(kR_{1}) = 0 \, ,
\end{equation}
and for those values of $k$ if 
$\alpha,\beta$ are non-zero solutions to system of equations 
\begin{equation}
\begin{bmatrix}
H_{0}'(kR_1) & J_{0}'(kR_{1}) \\
H_{0}'(kR_{2}) & J_{0}'(kR_{2}) 
\end{bmatrix}
\begin{bmatrix}
\alpha \\
\beta
\end{bmatrix}
=
\begin{bmatrix}
0 \\
0
\end{bmatrix}
\, ,
\end{equation}
then $k$ is a Dirichlet eigenvalue and $\bu$ 
defined by~\cref{eq:annulus_direig}
is the corresponding eigenfunction.

\section{Neumann eigenvalues and eigenfunctions on the unit disk}
In this section, we derive an analytical expression which
can be used to compute some of the radially symmetric 
Neumann eigenvalues on the unit disk for the Stokes operator.

Suppose that $\bu$ is of the form
\begin{equation}
\bu = \nabla^{\perp} \jkr \, ,
\end{equation}
and the pressure is given by $p=0$, as
$\bu$ satisfies $(\Delta + k^2)\bu = 0$.

Then, the surface traction $\bt$ on the disk of radius $r$ 
is given by
\begin{equation}
\bt = 
\left( -\frac{k}{r^2}\jpkr  + \frac{k^2}{r} \jppkr\right)
\begin{bmatrix}
\sint \\
-\cost
\end{bmatrix} \, .
\end{equation}

Thus, $k$ which satisfies
\begin{equation}
  \label{eq:neu_roots}
-k \jpk + k^2 \jppk = 0 \, ,
\end{equation}
are Neumann eigenvalues on the unit disk.
The first $4$ roots of the \cref{eq:neu_roots} are
in \cref{tab:neu_roots}.
\begin{table}
  \centering
  \begin{tabular}{c}
    k  \\ \hline
    5.135622301840683 \\
    8.417244140399865 \\
    11.61984117214906 \\
    14.79595178235126 
  \end{tabular}
  \caption{Roots of \cref{eq:neu_roots}.}
  \label{tab:neu_roots}
\end{table}


%


\end{document}